\definecolor{darkblue}{rgb}{0,0,0.7}
\definecolor{darkred}{rgb}{0.7,0,0}
\newtheorem{proposition}{Proposition}[section]
\newtheorem{lemma}[proposition]{Lemma}
\newtheorem{theorem}[proposition]{Theorem}
\newtheorem{corollary}[proposition]{Corollary}
\newtheorem{question}[proposition]{Question}
\newtheorem{conjecture}[proposition]{Conjecture}
\theoremstyle{definition}
\newtheorem{remark}[proposition]{Remark}
\newtheorem{example}[proposition]{Example}
\newtheorem{definition}[proposition]{Definition}
\newenvironment{customthm}[1]
  {\innercustomthm\itshape}
  {\endinnercustomthm}
\newcommand{\lk}{{\mathrm{lk}}}
\newcommand{\del}{{\mathrm{del}}}
\newcommand{\Con}{{\mathrm{Con}}}
\newcommand{\Supp}{\mathrm{Supp}}
\tikzstyle{vertex}=[circle, draw, inner sep=0pt, minimum size=6pt]
\def\F{\mathcal{F}}
\def\H{\mathcal{H}}
\def\E{\mathcal{E}}
\def\N{\mathcal{N}}
\def\D{\mathcal{D}}
\def\link{\mathrm{link}}
\def\del{\mathrm{del}}
\def\Ind{\mathrm{Ind}}
\def\l{\langle}
\def\r{\rangle}
\def\x{\mathbf x}
\def\SMS{\mathcal{SMS}}
\def\MS{\mathcal{MS}}
\def\SrAG{\Sigma_r(A,G)}
\def\Indr{\mathrm{Ind}_r(G)}
\title{The complex of $r$-co-connected subgraphs, chordality and Fr\"oberg's theorem}
\thanks{Last updated: \today}
\author{Priyavrat Deshpande}
\address{Chennai Mathematical Institute, India}
\email{pdeshpande@cmi.ac.in}
\author{Amit Roy}
\address{Chennai Mathematical Institute, India}
\email{amitiisermohali493@gmail.com}
\author{Rutuja Sawant}
\address{Chennai Mathematical Institute, India}
\email{rutuja@cmi.ac.in}
\address{}
\email{}
\thanks{}
\subjclass[2020]{05E45, 05C75, 05C69, 05E40}
\keywords{Fröberg’s theorem, clutter, vertex decomposable complex, shellability, Cohen-Macaulay property, chordal hypergraphs, Alexander duality, Stanley–Reisner correspondence.}
\begin{document}
	
	\begin{abstract}

We introduce a new family of pure simplicial complexes, called the $r$-co-connected complex of $G$ with respect to $A$, $\Sigma_r(A,G)$, where $r\geq 1$ is a natural number, $G$ is a simple graph, and $A$ is a subset of vertices.  
Interestingly, when $A$ is empty, this complex is precisely the Alexander dual of the $r$-independence complex of $G$. 
We focus on uncovering the relationship between the topological and combinatorial properties of the complex and the algebraic and homological properties of the Stanley-Reisner ideal of the dual complex. 
A key ingredient in this proof is an auxilary complex that encodes local connectivity. 
First, we prove that $\Sigma_r(A,G)$ is vertex decomposable whenever the induced subgraph $G[A]$ is connected and nonempty, yielding a versatile deletion-link calculus for higher independence via Alexander duality. 
Furthermore, when $A=\emptyset$ and $r \ge 2$, we establish that for several significant classes of graphs - including chordal, co-chordal, cographs, cycles, complements of cycles, and certain grid graphs - the properties of vertex decomposability, shellability, and Cohen-Macaulayness are equivalent and precisely characterized by the co-chordality of the associated clutter $\mathrm{Con}_r(G)$. These results extend Fr\"oberg's theorem to the setting of $r$-connected ideals for these graph classes and motivate a conjecture concerning the linear resolution property of $r$-connected ideals in general. We also construct examples separating shellability from vertex decomposability.
\end{abstract}

\maketitle
	
\section{Introduction}\label{introduction}

The interplay between topological combinatorics and combinatorial commutative algebra, mediated by the Stanley-Reisner correspondence, has yielded profound connections between the structural properties of graphs and simplicial complexes and the algebraic invariants of monomial ideals. A classical result in this direction is Fr\"oberg’s theorem~\cite{OSR}, which characterizes the chordality of a finite simple graph in terms of the linear resolution property of its associated quadratic square-free monomial ideal.
	
Let $G$ be a finite simple graph on the vertex set $V(G)=\{x_1,\ldots,x_n\}$ with circuit set $E(G)$. A subset $W\subseteq V(G)$ is called an \emph{independent set} if $W$ contains no circuit of $G$. The collection of all independent sets forms the \emph{independence complex} $\mathrm{Ind}(G)$ of $G$, and the associated Stanley--Reisner ideal $I(G)=\langle x_i x_j \mid \{x_i,x_j\}\in E(G)\rangle$ in the polynomial ring $R=\mathbb K[x_1,\ldots,x_n]$ is known as the \emph{circuit ideal} (or edge ideal) of $G$.  
Fr\"oberg’s theorem can then be stated as
	\[
	I(G) \text{ has a linear resolution } \Longleftrightarrow G \text{ is co-chordal},
	\]
where a graph is said to be \emph{co-chordal} if its complement is chordal. In particular, for quadratic square-free monomial ideals, the property of having a linear resolution depends only on the combinatorial structure of the associated graph and not on the characteristic of the base field.
This result was later extended by Eagon and Reiner \cite[Proposition 8]{EagonReiner1998} who showed that several algebraic and topological properties are equivalent for graphs, as summarized below (recall that for a simplicial complex $X$, its Alexander dual, denoted by $X^{\vee}$, is the simplicial complex whose faces are the complement of non-faces of $X$).
\begin{theorem}
    The following are equivalent for a graph $G$:
    \begin{enumerate}
        \item $I(G)$ has a linear resolution,
        \item $(\Ind(G))^{\vee}$ is vertex decomposable,
        \item $(\Ind(G))^{\vee}$ is shellable,
        \item $(\Ind(G))^{\vee}$ is Cohen-Macaulay,
        \item $G$ is co-chordal.
    \end{enumerate}
\end{theorem}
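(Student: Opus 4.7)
The plan is to complete a cycle of implications by combining three classical pieces with one constructive argument. First, (1) $\Leftrightarrow$ (5) is exactly Fröberg's theorem as cited above. Second, (1) $\Leftrightarrow$ (4) is the Eagon--Reiner theorem applied to the squarefree monomial ideal $I(G) = I_{\Ind(G)}$, which identifies the linear resolution property of a Stanley--Reisner ideal with Cohen--Macaulayness of its Alexander dual complex. Third, the implications (2) $\Rightarrow$ (3) $\Rightarrow$ (4) are the standard topological facts that any pure vertex decomposable complex is shellable and any pure shellable complex is Cohen--Macaulay. Thus it remains only to prove (5) $\Rightarrow$ (2).

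For this implication I would argue by induction on $n = |V(G)|$. Write $\Delta := (\Ind(G))^{\vee}$; its facets are precisely the sets $V(G) \setminus e$ for edges $e$ of $G$. Since $G$ is co-chordal, the complement $G^{c}$ admits a perfect elimination ordering, so I would pick a vertex $v$ that is simplicial in $G^{c}$, equivalently, such that $V(G) \setminus (\{v\} \cup N_G(v))$ is an independent set in $G$. The aim is to exhibit $v$ as a shedding vertex of $\Delta$ with both $\link_{\Delta}(v)$ and $\del_{\Delta}(v)$ vertex decomposable, and then appeal to induction.

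A direct computation identifies $\link_{\Delta}(v)$ with the Alexander dual $(\Ind(G - v))^{\vee}$ on vertex set $V(G) \setminus \{v\}$, and identifies $\del_{\Delta}(v)$ with the complex $\{F \subseteq V(G) \setminus \{v\} : N_G(v) \not\subseteq F\}$, which is the join of the full simplex on $V(G) \setminus (\{v\} \cup N_G(v))$ with the boundary of the simplex on $N_G(v)$. Purity of $\del_{\Delta}(v)$ at the correct dimension is equivalent to every edge of $G$ not incident to $v$ meeting $N_G(v)$ --- precisely the simpliciality of $v$ in $G^{c}$ --- so this is where hypothesis (5) enters. Vertex decomposability of $\del_{\Delta}(v)$ then follows from the fact that the join of a simplex and a sphere is vertex decomposable. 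For the link, the key observation is that deleting a simplicial vertex from a chordal graph preserves chordality, so $G - v$ remains co-chordal and the inductive hypothesis applies to $(\Ind(G - v))^{\vee}$.

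The main technical obstacle is the correspondence between ``$v$ simplicial in $G^{c}$'' and ``$v$ a shedding vertex of $\Delta$'', together with the structural identification of $\del_{\Delta}(v)$ as a join of recognizable vertex decomposable pieces. Once these are in place, the inductive step goes through and the cycle (1) $\Leftrightarrow$ (5) $\Rightarrow$ (2) $\Rightarrow$ (3) $\Rightarrow$ (4) $\Leftrightarrow$ (1) closes, proving all five conditions equivalent.
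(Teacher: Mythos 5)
Your proposal is correct, but note that the paper does not actually prove this statement: it is quoted as a known result of Eagon and Reiner (\cite[Proposition 8]{EagonReiner1998}) and used as a black box. So the honest comparison is not with a proof of this theorem but with the paper's later machinery, and there your argument for $(5)\Rightarrow(2)$ is essentially the $r=2$ specialization of what the authors do for general $r$: your identification of $\lk_\Delta(v)$ with $(\Ind(G-v))^{\vee}$ is \Cref{Eq. conditions for lk and del}(1) with $A=\emptyset$, $r=2$; your identification of $\del_\Delta(v)$ with the join of a full simplex on $V(G)\setminus N_G[v]$ and the boundary of the simplex on $N_G(v)$ is exactly $\Sigma_1(\{v\},G)$ from \Cref{Eq. conditions for lk and del}(2), whose vertex decomposability is \Cref{Vertex decoposable}; and your criterion ``$v$ is simplicial in $G^c$, i.e.\ $V(G)\setminus N_G[v]$ is independent'' is precisely the shedding condition $E(\Con_2(G\setminus N_G[v]))=\emptyset$ of \Cref{Eq. condition for a shedding vertex of Sigma_r(G)}, which is how \Cref{V.D of Sigma_r(G) if G is co-chordal} finds its shedding vertex for all $r\ge 2$. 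So your route is sound and, pleasantly, it is the degree-two shadow of the paper's generalization rather than an independent detour. Two small points to tidy up: (i) the equality $\del_\Delta(v)=\{F\subseteq V(G)\setminus\{v\}: N_G(v)\not\subseteq F\}$ holds only under the simpliciality hypothesis (without it you only get containment $\supseteq$), so state it as a consequence of the choice of $v$ rather than as a generic computation; (ii) handle the degenerate cases where $G$ has at most one edge, or where $v$ lies in every edge of $G$ (so $v$ is not a vertex of $\Delta$ and cannot be shed) --- in the latter case $\Delta$ is already the join of a simplex with the boundary of the simplex on $N_G(v)$ and is vertex decomposable directly.
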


The situation changes drastically in higher degrees. 
A classical example due to Reisner \cite{reisner76}, arising from the minimal triangulation of the projective plane, shows that the minimal free resolution of the corresponding degree $3$ square-free monomial ideal does depend on the characteristic of the field $\mathbb{K}$.
More recently, for $1\le d\le n-3$, Ficarra and Moradi~\cite{CEI} constructed degree-$d$ square-free monomial ideals exhibiting the same dependence. 
Consequently, a purely combinatorial characterization of all square-free monomial ideals with linear resolution-analogous to Fr\"oberg’s theorem-cannot be achieved in general.
	
A more tractable problem is to characterize combinatorially those square-free monomial ideals that have linear minimal free resolutions over every field. 
For such higher degree ideals, hypergraphs or clutters are convenient combinatorial structures, as they generalize graphs. 

In this context, several approaches have been proposed by extending the notion of chordality from graphs to clutters (i.e., hypergraphs). 
Among these, the construction of \emph{chordal clutters} by Bigdeli et~al. \cite{SBCC} is particularly notable, as it subsumes previous notions of chordality introduced by Woodroofe~\cite{CSCM}, Emtander~\cite{Emtander2008}, and Van Tuyl and Villarreal \cite{tuyl_vill08}. 
A $r$-uniform clutter $\mathcal{C}$ is called chordal if it admits a simplicial sequence of maximal $(r-1)$-subcircuits whose iterative deletion exhausts $\mathcal{C}$, generalizing perfect elimination in chordal graphs, and for the complement of such clutters the edge (or circuit) ideal has a linear resolution over every field.
However, a counterexample attributed to Eric Babson \cite[Example 2.4]{CCVD} demonstrated that there is a clutter which is not chordal, but the associated circuit ideal has a linear resolution. 
In the same article, it was shown that if the Stanley-Reisner complex associated with a clutter is vertex decomposable, then the clutter is chordal \cite[Theorem 3.10]{CCVD}. 
These developments naturally lead to the following question.
	\begin{question}
		Can Fr\"oberg’s theorem be expected to hold for specific classes of clutters?
	\end{question}
	
The motivation of our investigation was to extend the results of \cite{FVDH}, in order to obtain a better characterization of the Stanley-Reisner ideal of the $r$-independence complexes. The study of $r$-independence complexes can be traced to the work of Paolini and Salvetti \cite{WSHAG}. 
They are studied in the combinatorial point of view in \cite{CHVD,FVDH,DDIG,HICH,RoyPatra2025}. More recently, the algebraic properties of the corresponding edge (circuit) ideals, referred to as \emph{$r$-connected ideals}, were studied in~\cite{LQCG,2025connectedideals}.

The linear resolution property of edge (circuit) ideals of clutters is intimately connected to the Cohen-Macaulayness of the Alexander dual of the corresponding independence complexes. By the theorem of Eagon and Reiner~\cite{EagonReiner1998}, the edge (circuit) ideal of a clutter has a linear resolution if and only if the Alexander dual of its Stanley-Reisner complex is Cohen-Macaulay. Hence, to investigate the linear resolution property of the edge (circuit) ideal of $r$-connected clutters, it suffices to study the Cohen-Macaulay property of the Alexander dual of their independence complexes. Motivated by this connection, we introduce a new family of simplicial complexes associated to a graph $G$, defined for each subset $A \subseteq V(G)$ by
	\[
	\Sigma_r(A,G) = \big\langle F^c \setminus A : F\subseteq V(G),\; |F|=r,\; F\cap A=\emptyset,\; G[F\cup A]\text{ is connected} \big\rangle.
	\]
	We call $\Sigma_r(A,G)$ the \emph{$r$-co-connected complex} of $G$ with respect to $A$. In particular, $\Sigma_r(\emptyset,G)$, denoted simply by $\Sigma_r(G)$, coincides with the Alexander dual of $\mathrm{Ind}_r(G)$, that is, $\Sigma_r(G)=(\mathrm{Ind}_r(G))^{\vee}$. Our first main result in this context is the following.
    \begin{customthm}{\ref{Vertex decoposable}}
Let $G$ be a graph, and let $A \subseteq V(G)$ be a non-empty set such that $G[A]$ is connected. Then, for each $r \geq 1$, the simplicial complex $\Sigma_r(A,G)$ is vertex decomposable.
    \end{customthm}
	
	In the special case $A=\emptyset$, the situation becomes considerably more intricate. Indeed, there exist graphs $G$ for which the complex $(\mathrm{Ind}_r(G))^{\vee}$ fails to be vertex decomposable. Nevertheless, in \Cref{V.D of Sigma_r(G)}, we establish a necessary and sufficient condition for the vertex decomposability of $\Sigma_r(G)$. Furthermore, for $ r\ge 2$, we prove that for several important classes of graphs, the following properties are equivalent: vertex decomposability, shellability, and the Cohen-Macaulay property of $\Sigma_r(G)$. In addition, these conditions are shown to be equivalent to the co-chordality of the associated $r$-connected clutter $\mathrm{Con}_r(G)$. The classes of graphs in this context are the following:

	\begin{itemize}
		\item[$\bullet$] chordal graphs (\Cref{V.D of Sigma_r(G) if G is chordal}),
		
		\item[$\bullet$] co-chordal graphs (\Cref{V.D of Sigma_r(G) if G is co-chordal}),

        \item[$\bullet$] cographs (\Cref{V.D of Sigma_r(G) if G is cograph}),
		
		\item[$\bullet$] cycle graphs $C_n$ (\Cref{cycle vd theorem}),
		
		\item[$\bullet$]  cycle-complement graphs (\Cref{cycle complement}),
		
		\item[$\bullet$] ladder graphs $P_n\times P_2$ (\Cref{ladder graph}),
		
		\item[$\bullet$] grid graphs $P_n\times P_3$ (\Cref{3ladder graph}).
	\end{itemize}
	
The above results can be viewed as an extension of Fr\"oberg’s theorem to the setting of $r$-connected ideals of the aforementioned classes of graphs. 
Note that, in the case of co-chordal graphs, complements of cycles, and cographs, the simplicial complexes $\Sigma_r(G)$ are already vertex decomposable for every $r \ge 2$. Furthermore, the result in \Cref{V.D of Sigma_r(G) if G is chordal} provides an affirmative answer to the first part of \cite[Question 7.5]{LQCG}. 

Two related complexes to $\Sigma_r(G)$ are the \emph{$r$-cut complex} $\Delta_r(G)$ and the \emph{$r$-path-free complex} $\Gamma_r(G)$. 
It is worth mentioning that $\Sigma_r(G) = \Delta_r(\overline{G}) = \Gamma_r(G)^{\vee}$ for $r \le 3$, where $\overline{G}$ denotes the complement of $G$. 
Hence, our results for $\Sigma_r(G)$ naturally relate to some of the known results on $\Delta_r(\overline{G})$ and $\Gamma_r(G)^{\vee}$ in \cite{LQCG, TCG1, TCG2, 2025connectedideals} for the case $r = 3$.
	
As mentioned earlier, the complex $\Sigma_r(G)$ is not always vertex decomposable. In \Cref{gap-free example}, we present an example of a gap-free graph $G$ for which $\mathrm{Con}_3(G)$ is co-chordal and $\Sigma_3(G)$ is shellable, yet $\Sigma_3(G)$ fails to be vertex decomposable. Furthermore, in \Cref{shell but not VD}, for each $r\ge 3$, we construct an example of an $r$-gap-free unicyclic graph $G$ such that $\Sigma_r(G)$ is shellable but not vertex decomposable. These observations motivate us to propose the following conjecture.
		\begin{conjecture}\label{conj1}
		Let $G$ be a finite simple graph with the $r$-co-connected complex $\Sigma_r(G)$. Then $\Sigma_r(G)$ is Cohen-Macaulay if and only if $\mathrm{Con}_r(G)$ is co-chordal.
	\end{conjecture}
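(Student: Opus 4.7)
The plan is to treat the two directions separately. For the reverse implication ($\Con_r(G)$ co-chordal $\Rightarrow$ $\Sigma_r(G)$ Cohen-Macaulay), I would invoke the theorem of Bigdeli et al.~\cite{SBCC}, which states that co-chordality of a clutter yields a linear resolution, over every field, for the circuit ideal of its complement. Applied to $\Con_r(G)$, this produces a linear resolution for the circuit ideal generated by connected $r$-subsets of $G$, which is precisely the Stanley-Reisner ideal of $\Ind_r(G) = (\Sigma_r(G))^{\vee}$. The Eagon-Reiner theorem then gives Cohen-Macaulayness of $\Sigma_r(G)$.

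For the forward implication, I would attempt an induction on $|V(G)|$ that exploits the deletion-link calculus of the paper, particularly \Cref{Vertex decoposable}. Given that $\Sigma_r(G)$ is Cohen-Macaulay, Reisner's criterion propagates Cohen-Macaulayness to every face link, and I would search for a vertex $v$ of $\Sigma_r(G)$ whose link $\lk_{\Sigma_r(G)}(v)$ and deletion $\del_{\Sigma_r(G)}(v)$ are themselves expressible as complexes of the form $\Sigma_{r'}(A, G')$ with $A$ nonempty and $G'[A]$ connected, so that both are automatically vertex decomposable, hence Cohen-Macaulay, by \Cref{Vertex decoposable}. Dually, such a $v$ should correspond, on the clutter side, to a simplicial maximal $(r-1)$-subcircuit of $\overline{\Con_r(G)}$ in the sense of Bigdeli et al., whose removal leaves a clutter again of the form $\overline{\Con_r(G')}$. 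Iterating produces the simplicial sequence witnessing chordality of $\overline{\Con_r(G)}$. The classes already handled in the paper (chordal, co-chordal, cograph, cycle, complement of cycle, ladder, and $P_n \times P_3$) furnish concrete models for this shedding-to-simplicial-facet correspondence, and the $r = 2$ case collapses to Fr\"oberg's theorem itself.

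The main obstacle, and the reason the statement is left as a conjecture, is that Babson's counterexample in~\cite{CCVD} shows that Cohen-Macaulayness of the Alexander dual of a general clutter is strictly weaker than co-chordality of the clutter, so any proof must use the graph-theoretic origin of $\Con_r(G)$ in an essential way. Concretely, one must rule out the possibility that $\Sigma_r(G)$ is Cohen-Macaulay while every candidate shedding vertex fails to yield a link and deletion of the form $\Sigma_{r'}(A, G')$ with $A$ connected and nonempty; the examples in \Cref{gap-free example} and \Cref{shell but not VD}, although they respect both co-chordality and Cohen-Macaulayness, already show that vertex decomposability is too rigid to serve as a drop-in proxy, so the induction must be phrased at the level of the Cohen-Macaulay property itself and likely carried out within the broader family $\Sigma_r(A,G)$, which is not preserved by arbitrary vertex deletion in $G$. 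A secondary obstruction is field dependence: co-chordality is purely combinatorial, while Cohen-Macaulayness of $\Sigma_r(G)$ can \emph{a priori} depend on $\operatorname{char}\K$; a reasonable preliminary milestone would be to establish field-independence of the Cohen-Macaulay property within the class $\{\Sigma_r(G)\}$, after which the combinatorial extraction described above becomes the central task.
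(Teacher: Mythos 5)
The statement you are addressing is \Cref{conj1}, which the paper explicitly leaves open; there is no proof in the paper to compare against, only partial results for special graph classes (chordal, co-chordal, cographs, cycles, complements of cycles, and the grids $P_n\times P_2$, $P_n\times P_3$). Your backward implication is correct and is precisely the route the paper itself uses inside those partial results: co-chordality of $\Con_r(G)$ gives a linear resolution of $I_{\Ind_r(G)}$ by \cite[Theorem 3.3]{SBCC}, and the Eagon--Reiner theorem converts this into Cohen--Macaulayness of $\Sigma_r(G)=(\Ind_r(G))^{\vee}$. That direction is a theorem, not part of what is conjectural.

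The forward implication is where the genuine gap lies, and your proposal does not close it. Your induction is built on finding a shedding vertex whose link and deletion land in the family $\Sigma_{r'}(A,G')$ with $A$ nonempty and connected, so that \Cref{Vertex decoposable} applies; but if that scheme succeeded it would prove $\Sigma_r(G)$ vertex decomposable, and \Cref{gap-free example} and \Cref{shell but not VD} exhibit graphs for which $\Con_r(G)$ is co-chordal and $\Sigma_r(G)$ is Cohen--Macaulay (indeed shellable) yet $\Sigma_r(G)$ has no shedding vertex at all, so the inductive step cannot even begin on exactly the instances the conjecture must cover. The only implication in the literature pointing from the complex to chordality of the clutter is \cite[Theorem 3.10]{CCVD}, which requires vertex decomposability as a hypothesis, and Babson's example shows it cannot be relaxed to Cohen--Macaulayness for arbitrary clutters. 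You diagnose all of this accurately in your closing paragraph, but diagnosing the obstruction is not overcoming it: no mechanism is supplied that extracts a simplicial maximal subcircuit of $(\Con_r(G))^{c}$ from the Cohen--Macaulay property alone, and the field-independence of Cohen--Macaulayness for $\Sigma_r(G)$, which you correctly flag as a prerequisite, is not established. As written, the proposal settles one direction and leaves the other open, which is consistent with the statement's status as a conjecture rather than a theorem.
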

Note that, if this conjecture holds true, it would yield a complete combinatorial classification of graphs whose $r$-connected ideals possess linear resolutions, and in particular, the linear resolution property of $r$-connected ideals will not depend on the characteristic of the base field. Our results presented in \Cref{V.D of Sigma_r(G) if G is chordal}, \Cref{V.D of Sigma_r(G) if G is co-chordal}, \Cref{V.D of Sigma_r(G) if G is cograph}, \Cref{cycle vd theorem}, \Cref{cycle complement}, \Cref{ladder graph}, and \Cref{3ladder graph} provide substantial evidence in support of this conjecture.
At present, no example is known of a graph $G$ for which $\Sigma_r(G)$ is Cohen–Macaulay while $\operatorname{Con}_r(G)$ fails to be co-chordal.

It is natural to ask whether known counterexamples to Fröberg-type characterizations for general clutters, such as Babson’s example (\cite[Example 2.4]{CCVD}), could occur in our setting.
Babson’s construction uses a uniform clutter whose circuits cannot be realized as connected induced subgraphs of a graph.
By contrast, the clutters we study are of the special form $\operatorname{Con}_r(G)$, where each circuit is an $r$-vertex set inducing a connected subgraph of $G$.
This graph-based structure imposes strong hereditary and intersection constraints on the circuits, making the local configurations underlying Babson’s example impossible in $\operatorname{Con}_r(G)$.
Thus, existing counterexamples for arbitrary clutters do not directly challenge our conjecture, which is stated purely in graph-theoretic terms.
Taken together, our results suggest that graph connectivity severely restricts the pathological behavior observed in general higher-degree square-free monomial ideals.

\begin{remark}
Throughout this paper, clutters arise exclusively as auxiliary objects associated to graphs, namely $\Con_r(G)$. 
We do not attempt to study arbitrary clutters. 
Rather, the role of $\Con_r(G)$ is to encode graph-theoretic connectivity in a form amenable to existing results on chordal clutters and linear resolutions. 
All main results are therefore statements about graphs and their associated simplicial complexes $\Sigma_r(G)$.     
\end{remark}

The paper is organized as follows. In \Cref{preliminaries}, we gather the necessary definitions and background material on clutters, simplicial complexes, higher independence complexes, and the Stanley–Reisner ideals associated with these complexes. 
In \Cref{sec-3}, we introduce the notion of the \emph{$r$-co-connected complex} $\Sigma_r(G)$ and its natural generalization $\Sigma_r(A,G)$, and examine their topological properties. 
A key technical ingredient is the introduction and analysis of the auxiliary complexes $\Sigma_r(A, G)$, which allow us to carry out inductive arguments on vertex decomposability and chordality.
In \Cref{Section 4 Froberg}, we establish analogues of Fr\"oberg’s theorem for $r$-connected ideals corresponding to various classes of graphs. 
We also obtain a concrete decision procedure for vertex decomposability of $\Sigma_r(G)$, clarifying the computational content of the theory.
Finally, \Cref{con-rem}, presents several examples demonstrating that certain results established in \Cref{Section 4 Froberg} do not necessarily hold for arbitrary graphs.
The paper concludes with a discussion of open problems and directions for future research.

	\section{Preliminaries}\label{preliminaries}
	
	In this section, we recall some preliminary notions related to simplicial complexes and clutters (hypergraphs) that will be used throughout the paper.
Moreover, we consistently use the parameter $r$ when citing results stated for $d$-uniform clutters, we implicitly take $d=r$.

    \subsection{Simplicial complex} 
	An {\it (abstract) simplicial complex} $\Delta$ on the vertex set $V(\Delta)=\{x_1,\ldots,x_n\}$ is defined as a collection of subsets of $V(\Delta)$ such that, given $U\in \Delta$ and $Z\subseteq U$, then $Z\in \Delta$ holds. 
	The elements of $\Delta$ are referred to as the {\it faces} of $\Delta$, while the maximal faces are called the {\it facets}. 
	The notation $\mathcal F(\Delta)$ is used for the set of facets of $\Delta$. 
	Let $F_1,\ldots, F_t$ be all the facets of $\Delta$, we say that $\Delta$ is generated by $F_1,\ldots, F_t$, written as $\Delta=\langle F_1,\ldots, F_t \rangle$. 
	A simplicial complex $\Delta=\l F_1,\ldots,F_t\r$ is {\it pure} if $|F_i|=|F_j|$, for every $i\neq j$. 
	Concerning $F\in\Delta$, the {\it dimension} of $F$ is defined as $\dim F=|F|-1$, and in the case of the simplicial complex $\Delta$, we define $\dim\Delta=\max_{F\in \Delta}\dim F$. 
	If $F$ is a face of $\Delta$ such that $\dim(F)=\dim(\Delta)-1$, then $F$ is a {\it co-dimension one face}. 
	Therefore, a {\it $d$-simplex} is a $d$-dimensional simplicial complex containing exactly one facet of dimension $d$.
	If $\Delta$ is a simplex on the vertex set $A$, it may sometimes be denoted by $\Delta_{A}$. 
	Assume $\Delta_1$ and $\Delta_2$ are two simplicial complexes on disjoint vertex sets $V(\Delta_1)$ and $V(\Delta_2)$. 
	The {\it join} of $\Delta_1$ and $\Delta_2$, expressed as $\Delta_1*\Delta_2$, constitutes a simplicial complex defined by the vertex set and the set of facets as: $V(\Delta_1*\Delta_2)=V(\Delta_1)\sqcup V(\Delta_2)$, and $\F(\Delta_1*\Delta_2)=\{F_1\sqcup F_2\mid F_1\in\F(\Delta_1),F_2\in\F(\Delta_2)\}$, respectively.
	
	Let $F$ be a face of a simplicial complex $\Delta$. The {\it link} of $F$ is the simplicial complex \[\mathrm{lk}_{\Delta}(F)=\{H \in \Delta|\,H \cap F =\emptyset \hspace{2mm}\text{and}\hspace{2mm} H \cup F \in \Delta \},\]
	and the {\it deletion} of $F$ is the simplicial complex \[\mathrm{del}_{\Delta}(F)=\{ H \in \Delta|\,H \cap F =\emptyset \}.\] 
	In case $F=\{x_i\}$, for brevity, we denote $\mathrm{lk}_{\Delta}(\{x_i\})$ and $\mathrm{del}_{\Delta}(\{x_i\})$ as $\mathrm{lk}(x_i)$ and $\mathrm{del}(x_i)$, respectively. 
	\begin{definition}
		A simplicial complex $\Delta$ is vertex decomposable if $\Delta$ is a simplex, or a vertex $x$ exists in $\Delta$ making $\mathrm{lk}_{\Delta}(x)$ and $\mathrm{del}_{\Delta}(x)$ vertex decomposable, and every facet of $\mathrm{del}_{\Delta}(x)$ is also a facet of $\Delta$.
	\end{definition}
    The {\it empty complex}, denoted by $\{\emptyset\}$, is the simplicial complex consisting only of the empty set as its face; it is regarded as the unique $(-1)$-dimensional simplicial complex.
The {\it void complex}, denoted by $\emptyset$, is the collection with no faces at all and is typically considered as a complex on the empty vertex set.
	Note that the empty and void complexes are considered vertex decomposable. Two more definitions that we need are:
	\begin{definition}
		A simplicial complex $\Delta$ is shellable if its facets $F_1,\ldots,F_s$ can be ordered so that for each $2\le i\le s$, the subcomplex $\l F_i\r\cap \l F_1,\ldots,F_{i-1}\r$ is pure of dimension $\mathrm{dim}(F_i)-1$. Such an ordering of the facets is called a shelling order.
	\end{definition}
	
	\begin{definition}
		A simplicial complex $\Delta$ is Cohen-Macaulay over a field $\mathbb K$ if $\widetilde{H}_i(\mathrm{lk}_{\Delta}(F);\mathbb{K})=0$ holds for all $F\in\Delta$ and $i < \dim \mathrm{lk}_{\Delta} (F)$, where $\widetilde{H}_i(-;\mathbb{K})$ is $i^{\text{th}}$ reduced simplicial homology group with coefficient in $\mathbb K$.   
	\end{definition}
	We drop the reference to $\mathbb{K}$ if the homology is computed with coefficients in $\mathbb{Z}$. 
	For a pure simplicial complex $\Delta$, it is a well-known fact (see, for instance, \cite[Proposition 6.3.32]{RHVBook})
	\begin{align}\label{VdShCm}
		\text{Vertex decomposable}\Rightarrow \text{Shellable}\Rightarrow \text{Cohen-Macaulay}.
	\end{align}

    In the sequel, we discuss the Stanley-Reisner ideal of the $r$-independence complex of a graph and the associated clutter, where the notion of Alexander duality plays an important role.
    \begin{definition}
        Let $\Delta$ be a simplicial complex on the vertex set $V(\Delta)$. Then the Alexander dual of $\Delta$, denoted by $\Delta^{\vee}$, is the simplicial complex with the set of faces $\{V(\Delta)\setminus F:F\notin\Delta\}$.
    \end{definition}
   	

	
	
	\subsection{{Clutters}} 
Though the results in this paper are expressed in terms graph properties we use the generalization of the chordality notion as our main tool. 
Hence, we quickly recall relevant background on clutter. 
We include this material solely to record known results used later; no results about arbitrary clutters are claimed

A {\it clutter} $\H$ is defined as a pair $(V(\H), E(\H))$, consisting of a vertex set $V(\H)$ and a circuit set $E(\H)$, wherein the circuit set is composed of a collection of subsets from $V(\H)$, ensuring that no two elements of these subsets are such that one is contained within another. 
If for each $\E\in E(\H)$, $|\E|=d$, then we define $\H$ as a {\it $d$-uniform clutter}. 
Notably, the $2$-uniform clutters are identified as the finite simple graphs. 
A subset $W\subseteq V(\H)$ is designated as a {\it clique} within a $d$-uniform clutter $\H$ when every subset of $W$ with cardinality $d$ belongs to $E(\H)$. In the context of a $d$-uniform clutter $\H$, the {\it complement} $\H^c$ of $\H$ corresponds to a clutter with the vertex set $V(\H)$ and circuit set $\{\E\subseteq V(\H)\mid |\E|=d\text{ and }\E\notin E(\H)\}$. 
The {\it induced subclutter} $\H[A]$ of a clutter $\H$ on set $A\subseteq V(\H)$ is a clutter with vertex set $A$ and circuit set $\{\E\in E(\H)\mid \E\subseteq A\}$. 
Furthermore, $\H\setminus A$ denotes the induced subclutter $\H[V(\H)\setminus A]$. If $A=\{a\}$ for some $a\in V(\H)$, then $\H\setminus\{a\}$ is simply denoted by $\H\setminus a$. 
For $B\subseteq V(\H)$, the {\it open neighborhood} of $B$, signified by $\N_{\H}(B)$, comprises the set $\{x\in V(\H)\setminus B\mid x\in \E\subseteq  \{x\}\cup B \text{ for some }\E\in E(\H)\}$. 
If $b\in \N_{\H}(B)$, then we say $b$ is a {\it neighbor} of $B$. 
The collection $\N_{\H}(B)\cup B$ is referred to as the {\it closed neighborhood} of $B$, represented by $\N_{\H}[B]$.

Let $\H$ be a clutter with vertex set and circuit set $V(\H)$ and $E(\H)$, respectively. A set $W\subseteq V(\H)$ is said to be an {\it independent set} if for each $\E\in E(\H)$, $\E\cap W=\emptyset$. The collection of all independent sets of $\H$ forms the {\it independence complex} of $\H$, denoted by $\mathrm{Ind}(\H)$.
A {\it matching} $M$ in a clutter $\H$ is characterized by a collection of circuits where $\E\cap \E'=\emptyset$ for any two distinct $\E,\E'\in M$. 
A matching $M$ constitutes an {\it induced matching} of $G$ if $E(\H[V(M)])=M$, wherein $V(M)$ denotes the set of vertices residing in the circuits of $M$. 
Lastly, the matching number $\nu(\H)$ and the induced matching number $\gamma(\H)$ are defined as follows:
\begin{align*}
		\nu(\H)&=\max\{|M|: M\text{ is a matching of }\H\}\\
		\gamma(\H)&=\max\{|M|: M\text{ is an induced matching of }\H\}.	
\end{align*}

Consider $G$, a finite, simple graph defined by vertex set $V(G)$ and circuit set $E(G)$. For any two vertices $a,b\in V(G)$, an {\it induced path of length $t$} connecting $a$ and $b$ is constituted by a collection of vertices $P_t:a=a_1,a_2\ldots,a_{t-1},a_t=b$, such that $\{a_i,a_{i+1}\}$ are the only circuits. 
We simply call \(P_t\) a \emph{path} between \(a\) and \(b\) if $\{a_i,a_{i+1}\}$ are circuits but may not be the only circuits among all the $a_i$.
A {\it cycle} $C_n$ of length $n$ consists of a vertex set $V(C_n)=\{y_1,\ldots,y_n\}$ and a circuit set $E(C_n)=\{\{y_1,y_n\},\{y_i,y_{i+1}\}\mid 1\le i\le n-1\}$. 
A graph $G$ is said to be a \textit{forest} if it does not contain any cycle. 
A connected forest is simply called a \textit{tree}. A {\it spanning tree} of a graph $G$ is a subgraph $H$ of $G$ such that $H$ is a tree and $V(H)=V(G)$. 
A graph $G$ is said to be a \textit{chordal graph} if there is no induced $C_n$ in $G$ for $n\geq 4$. 
A vertex $x\in V(G)$ is said to be a \textit{simplicial vertex} of $G$ if $N_{G}[x]$ is a clique in $G$ (i.e., any two vertices are connected by a circuit). 
Moreover, $x$ is said to be a {\it leaf} vertex of $G$ if $x$ is a simplicial vertex and $|N_G[x]|=2$. 
Two circuits $e$ and $e'$ of a graph $G$ are said to form a \textit{gap} in $G$ if $\{e,e'\}$ forms an induced matching in $G$. A graph $G$ is said to be a {\it gap-free graph} if no two circuits of $G$ form a gap.
	
The notion of a chordal graph is extended to the realm of clutters by several researchers. Recently, Bigdeli et. al. \cite{SBCC} defined a new notion of chordal clutters that is more general in the sense that it absorbs various previous notions of chordality. Let $\H$ be a $d$-uniform clutter. For $e\subseteq V(\H)$, the {\it deletion} of $e$, denoted by $\H - e$, is the clutter on the vertex set $V(\H)$ with circuit set $\{\E\in E(\H)\mid e\nsubseteq \E\}$. A subset $e\subseteq V(\H)$ of cardinality $d-1$ is said to be a {\it maximal subcircuit} if $\N_{\H}(e)\neq\emptyset$. The collection of all maximal subcircuit of $\H$ is denoted by $\MS(\H)$. A mximal subcircuit $e$ of $\H$ is called a {\it simplicial maximal subcircuit} of $\H$ if $\N_{\H}[e]$ forms a clique in $\H$. Analogously, the collection of all simplicial maximal subcircuit of $\H$ is denoted by $\SMS(\H)$. With this terminology, the notion of chordal clutter is defined in \cite{SBCC} as follows.
	
\begin{definition}
	A $d$-uniform clutter $\H$ is a {\it chordal clutter} if either $E(\H)=\emptyset$ or there exists a $e\in\SMS(\H)$ such that $\H - e$ is again a chordal clutter.

    A $d$-uniform clutter $\H$ is said to be co-chordal if $\H^c$ is a chordal clutter.
\end{definition}

\subsection{Higher independence and the associated clutter}
	
Let $G$ be a graph on the vertex set $V(G)$ with circuit set $E(G)$. A subset $W$ of $V(G)$ is said to be {\it $r$-independent} if each connected component of $G[W]$ has vertex cardinality at most $r-1$. The collection of all $r$-independent sets forms the {\it $r$-independence complex} of $G$, and we denote it by $\Indr$. Note that $\mathrm{Ind}_2(G)$ is the classical {\it independence complex} $\mathrm{Ind}(G)$ of $G$. Moreover, for each $r\ge 2$, one can see that $I_{\Indr}$ is the square-free monomial ideal generated by the monomials associated to the connected induced subgraphs of $G$ with vertex cardinality $r$. In other words, $I_{\Indr}$ is the circuit ideal of the clutter $\Con_r(G)$, where $V(\Con_r(G))=V(G)$ and $E(\Con_r(G))=\{W\subseteq V(G): |W|=r\text{ and } G[W]\text{ is connected}\}$. The clutter $\Con_r(G)$ is also called the {\it $r$-connected clutter} of $G$. Note that the independence complex of $\Con_r(G)$ is the $r$-independence complex $\Indr$. 
	
For the $r$-independence complexes $\Indr$ it follows from the definition that $(\Indr)^{\vee}=\{F^c\mid F\in E(\Con_r(G))\}$. In \Cref{sec-3}, we define the simplicial complex $\SrAG$ for any subset $A$ of V(G), and it follows from the construction that $\Sigma_{r}(\emptyset, G)=(\Indr)^{\vee}$, which we also denote by $\Sigma_{r}(G)$. For the $r$-connected clutter $\Con_r(G)$, the induced matching number is denoted by $\gamma_r(G)$. Thus, $\gamma(\Con_r(G))=\gamma_r(G)$.  We say that a graph $G$ is {\it $r$-gap-free} if $\gamma_r(G)\le 1$. Observe that each gap-free graph is also $r$-gap-free. In other words, $\gamma_1(G)\le 1$ implies $\gamma_r(G)\le 1$. As we shall see in \Cref{Section 4 Froberg}, the $r$-gap-free condition is sometimes equivalent to the vertex decomposability property of the simplicial complex $\Sigma_{r}(G)$, for various well-known classes of graphs $G$.
	
\subsection{Stanley-Reisner ideal} Let $\Delta$ be a simplicial complex on the vertex set $V(\Delta)=\{x_1,\ldots,x_n\}$ and let $R$ denote the polynomial ring $\mathbb K[x_1,\ldots,x_n]$ over a field $\mathbb K$. 
It is well-known that the Stanley-Reisner ideal $I_{\Delta}$ of $\Delta$ is the square-free monomial ideal $\l\x_{F}:=\prod_{x_i\in F}x_i\mid F\notin\Delta\r$ in the polynomial ring $R$. 
Restricting to the complex $\Indr$ we obtain that 
\[
	I_{\Indr}=\l\x_{F}\mid F\subseteq V(G), |F|=r\text{ and }G[F] \text{ is connected }\r.
\]
Observe that the minimal generators of $I_{\Indr}$ correspond to the $r$-connected subsets of $G$, and more specifically, it is the circuit ideal of the $r$-connected clutter of $G$ defined above. 
Because of these reasons, the ideal $I_{\Indr}$ is also called the $r$-connected ideal in \cite{LQCG} (see also \cite{2025connectedideals}).
	
Numerous combinatorial properties of $\Delta$ possess algebraic counterparts in $I_{\Delta}$. 
For example, the Eagon-Reiner theorem \cite[Theorem 3]{EagonReiner1998} asserts that a simplicial complex $\Delta$ is Cohen-Macaulay if and only if $I_{\Delta^{\vee}}$ possesses a linear resolution. 
Similarly, according to a theorem by Herzog et al. \cite[Theorem 1.4]{HerzogHibiZheng2004}, $\Delta$ is shellable if and only if $I_{\Delta^{\vee}}$ exhibits linear quotients. 
More recently, Moradi and Khosh-Ahang provided an algebraic formulation of the vertex decomposability property. 
Specifically, they demonstrated in \cite[Theorem 2.3]{MoradiAhang2016} that $\Delta$ is vertex decomposable if and only if $I_{\Delta^{\vee}}$ is a vertex splittable ideal. 
In the present article, we employ these correspondences to infer certain properties of $I_{\Indr}$.
Nevertheless, as our primary focus lies on the properties of $\Indr$ and its Alexander dual, we do not elaborate on these algebraic properties and instead direct interested readers to the aforementioned works and the references therein for detailed definitions and further discussions on these topics.

    
\section{Co-connected complexes and their topology}\label{sec-3}
	
In this section, we formally define the simplicial complexes central to our study, together with their associated clutters, and examine their relevant topological and combinatorial properties. Throughout, we use the symbol $G$ to denote a graph. We begin by introducing the notion $r$-support of a subset $A$ of $V(G)$, for a positive integer $r$:
	\[
	\mathrm{Supp}_r(A, G) = \{ F \subseteq V(G)\setminus A : |F| = r,\  \text{$G[F \cup A]$ is connected} \}.
	\]
\begin{definition}
Given a positive integer $r$ and a subset $A$, the $r$-co-connected complex with respect to $A$, denoted by $\Sigma_r(A,G)$ is defined as
\[
	\Sigma_r(A,G) = \langle F^{c} \setminus A : F \in \mathrm{Supp}_r(A,G) \rangle ,
\]
where $F^{c}$ denotes the complement of $F$ in $V(G)$.    
\end{definition}

We additionally define $\Sigma_r(G)$ as the simplicial complex $\Sigma_r(\emptyset, G)$. 
It follows from the definition that for any positive integer $r$, the condition $\Supp_r(\emptyset,G) = E(\Con_r(G))$ holds, and for each $A$, we observe $V(\Sigma_r(A,G)) \subseteq V(G \setminus A)$. 
Moreover, $\Sigma_r(\emptyset, G)=(\mathrm{Ind}_r(G))^{\vee}$.

Next, we define the clutter $\D_r(A,G)$, which is related to the complex $\Sigma_r(A,G)$ through Alexander duality. Let
\begin{align*}
V(\D_r(A,G))&=V(G)\setminus A,\\
   E(\D_r(A,G)) &= \{\,F \subseteq V(G)\setminus A : |F|=r \ \text{and}\ G[A \cup F]\ \text{is disconnected}\}.
\end{align*}
It is easy to see that $\D_r(\emptyset,G)=(\Con_r(G))^c$ and we define $\D_r(\emptyset,G) = \D_r(G)$. Furthermore, for every $A \subseteq V(G)$ we have $(\mathrm{Ind}(\D_r(A,G)^c))^{\vee}=\Sigma_r(A,G)$. 

\begin{example}\label{example 1}
Let $G$ be the graph in \Cref{fig:chordal graph}. Take $r=2$ and $A=\{x_1\}$.

\begin{figure}[H]
		\centering
		\begin{tikzpicture}
			[scale=0.35, vertices/.style={draw, fill=black, circle, inner sep=1.5pt}]
			\node[vertices, label=above:{$x_1$}] ($x_1$) at (0,2)  {};
			\node[vertices, label=below:{$x_2$}] ($x_2$) at (0,-2)  {};
			\node[vertices, label=above:{$x_3$}] ($x_3$) at (-2,0)  {};
			\node[vertices, label=left:{$x_4$}] ($x_4$) at (-4,0)  {};
			\node[vertices, label=above:{$x_5$}] ($x_5$) at (2,0)  {};
			\node[vertices, label=right:{$x_6$}] ($x_6$) at (4,0)  {};
			
			\foreach \to/\from in {$x_1$/$x_2$,$x_1$/$x_3$,$x_1$/$x_5$,$x_2$/$x_3$,$x_2$/$x_5$,$x_3$/$x_4$,$x_5$/$x_6$}
			\draw [-] (\to)--(\from);
		\end{tikzpicture}\caption{}\label{fig:chordal graph}
	\end{figure}    
We first compute the $2$-support of $A$:
\begin{align*}
\mathrm{Supp}_2(A,G) & = \{ F \subseteq V(G)\setminus A : |F|=2,\; G[F \cup A]\ \text{is connected} \} \\ 
& = \{\{x_2,x_3\},\ \{x_2,x_5\},\ \{x_3,x_4\},\ \{x_3,x_5\},\ \{x_5,x_6\}\} .
\end{align*}

\noindent
Hence,
\begin{align*}
\Sigma_2(A,G)
& = \langle F^c \setminus A : F \in \mathrm{Supp}_2(A,G) \rangle\\
& = \langle \{x_4,x_5,x_6\},\ \{x_3,x_4,x_6\},\ \{x_2,x_5,x_6\},\ \{x_2,x_4,x_6\},\ \{x_2,x_3,x_4\} \rangle .
\end{align*}
Thus $\Sigma_2(A,G)$ is a simplicial complex on $V(G)\setminus A = \{x_2,x_3,x_4,x_5,x_6\}$ with the above facets. Moreover, 
\[
E(\D_2(A,G))=\{\{x_2,x_4\},\{x_4,x_5\}, \{x_4,x_6\},\{x_3,x_6\},\{x_2,x_6\}\}.
\]
\hfill$\diamond$
\end{example}

\medskip
\medskip
Our first main objective in this section is to prove that $\D_r(A,G)$ is a chordal clutter whenever $A$ is a nonempty subset of $V(G)$ with the property that $G[A]$ is connected. To this end, we introduce, for each $r$-uniform subclutter $\H$ of $\D_r(A,G)$ and each vertex $v$ of $\H$, a new clutter $\H_v$ defined as follows:
\begin{align*}
  V(\H_v) &= V(\H) \setminus \{v\},\\
  E(\H_v) &= \{\E \setminus \{v\} :\E \in E(\H) \; \text{and} \; v \in \E\}.
\end{align*}

Note that $\H_v$ is an $(r-1)$-uniform clutter. Now, for the clutter $\H_v$ we can deduce the following.
\begin{lemma}\label{nbhd equality}
    If $e \in \MS(\H_v)$, then $\N_{\H_v}[e] \;=\; \N_{\H}[e \cup \{v\}] \setminus \{v\}$.
\end{lemma}
\begin{proof}
    Let $e \in \MS(\H_v)$.  
    For any $x \in V(\H_v) \setminus e$, we have
    \[
        e \cup \{x\} \in E(\H_v) 
        \quad\Longleftrightarrow\quad 
        e \cup \{x,v\} \in E(\H).
    \]  
    Thus, $\N_{\H_v}[e] = \N_{\H}[e \cup \{v\}] \setminus \{v\}$.
\end{proof}

Observe that for a vertex $v \in V(\D_r(A,G))$, we can always decompose the circuit set of the $\D_r(A,G)$ as follows:
\begin{equation}\label{eq:2}
    E(\D_r(A,G)) = \{F \cup \{v\} : F \in E(\D_{r-1}(A \cup \{v\},G))\} \bigsqcup E(\D_r(A, G\setminus \{v\})).
\end{equation}
Based on this observation, one can verify that to establish the chordality of $\D_r(A,G)$, it suffices to construct a sequence of simplicial maximal subcircuits in the subclutters of $\D_{r-1}(A \cup \{v\}, G)$, as demonstrated in \Cref{chordal 1}. Before proceeding, we record the following short lemma.
	\begin{lemma}\label{connectivity lemma}
		Let $G$ be a connected graph with $|V(G)| \geq 2$. Then, the set $\{v \in V(G): G \setminus v \text{ is connected}\}$ has cardinality at least two.
	\end{lemma}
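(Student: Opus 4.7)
The plan is to use the classical spanning-tree/leaf argument. Since $G$ is connected, it admits a spanning tree $T$. Because $|V(G)|\ge 2$, the tree $T$ has at least two vertices, and a standard fact about trees is that any tree on at least two vertices has at least two leaves (for instance, take a longest path in $T$: its two endpoints must have degree $1$ in $T$).

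Next I would argue that any leaf $v$ of $T$ lies in the target set. Removing a leaf from a tree yields a tree on the remaining vertices, so $T\setminus v$ is a connected spanning subgraph of $G\setminus v$. Hence $G\setminus v$ itself is connected. Applying this to each of the (at least two) leaves of $T$ produces at least two distinct vertices $v$ with $G\setminus v$ connected, which is exactly what is claimed.

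The only thing requiring a moment of thought is the existence of two leaves; if I wanted to avoid invoking it as a folklore fact, I would prove it by contradiction. If $T$ had at most one leaf, then summing degrees gives $\sum_{w\in V(T)}\deg_T(w)\ge 2(|V(T)|-1)+1=2|V(T)|-1$, contradicting the handshake identity $\sum_{w}\deg_T(w)=2|E(T)|=2(|V(T)|-1)$. I see no real obstacle here; the argument is short and self-contained once a spanning tree is chosen.
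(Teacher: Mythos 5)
Your argument is correct and is essentially identical to the paper's own proof: both take a spanning tree $T$ of $G$, note that $T$ has at least two leaves since $|V(G)|\ge 2$, and observe that deleting a leaf of $T$ leaves a connected spanning subgraph of $G\setminus v$. Your extra justification of the two-leaves fact is a welcome bit of rigor but does not change the approach.
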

	\begin{proof} 
		Since $G$ is a connected graph, $G$ has one spanning tree, say $T$. Since $|V(G)|\ge 2$, we see that $T$ has at least two leaf vertices. Now, observe that $G\setminus x$ is a connected graph for each leaf $x$ of $T$. Thus, we have our result.
	\end{proof}

\begin{lemma}\label{chordal 1}
Let $r \ge 2$ be a positive integer and let $A \subseteq V(G)$ be a non-empty set such that $G[A]$ is connected.
Suppose $v \in N_G(A)$ so that $\D_{r-1}(A \cup \{v\}, G)$ is chordal. 
Then there exists a sequence $e_1, \ldots, e_t$ such that for each $i\in[t]$,
\[
e_i \in \SMS(\D_{i-1}), \text{~where~}
\;
\D_0=\D_r(A,G),\ \D_i=\D_{i-1}-e_i, \text{~and~}
\D_t=\D_r(A,G\setminus\{v\}).
\]

\end{lemma}
\begin{proof}
If $\D_{r-1}(A \cup \{v\}, G)$ is the empty clutter, the result follows immediately from the \Cref{eq:2}. Assume that $\D_{r-1}(A \cup \{v\}, G)$ is a non-empty chordal clutter. Then there exists a sequence $e'_1, \ldots, e'_t$
with
\[e'_i \in \SMS(\D'_{i-1}), \qquad \D'_0 = \D_{r-1}(A \cup \{v\},G), \qquad\D'_i = \D'_{i-1} - e_i',\] 
such that $\D'_t$ is the empty clutter. 

\medskip
\noindent
\textbf{Claim}
    Then $e_1 , \ldots, e_t $ is a required sequence, where $e_i = e'_i \cup \{v\}$ for all $i \in [t]$.
    
\medskip

\noindent
\textbf{Proof of Claim:}
Note that, for each $i \in \{0\} \cup [t]$ and the given $v$, the clutter $\D'_i$ is the $\mathcal{H}_v$-type subclutter of $\mathcal{D}_i$. 
For $i\in [t]$, by \Cref{nbhd equality},
\begin{equation}\label{eq:3}
\N_{\D'_{i-1}}[e'_{i}] = \N_{\D_{i-1}}[e'_{i} \cup \{v\}] \setminus \{v\} = \N_{\D_{i-1}}[e_{i}] \setminus \{v\}.
\end{equation}
Let $B$ be an $r$-subset of $\N_{\D_{i-1}}[e'_{i} \cup \{v\}]$.  
We want to show $B \in E(\D_{i-1})$.

If $v \in B$, then 
\[
B\setminus\{v\} \subseteq \N_{\D'_{i-1}}[e'_{i}],
\]
and since $e'_{i}\in \SMS(\D'_{i-1})$, we have $B\setminus\{v\}\in E(\D'_{i-1})$, hence $B\in E(\D_{i-1})$.
Assume $v \notin B$.
If $B\notin E(\D_{i-1})$ that is $B \in E({(\D_{i-1}})^c)$, then $G[A \cup B]$ is connected, because \[E((\D_{i-1})^c) = E({(\D_r(A,G)})^c) \cup \big(\bigcup\limits_{j=1}^{i-1} W_j \big),\] where $W_j = \{F \supseteq e_j : |F| = r, G[F \cup A] \text{~is disconnected}\}$. 

Now, we aim to show that there exists $x \in B$ such that $G[(A \cup B) \setminus \{x\}]$ is connected. 
Let $C$ be a connected component of $G[B]$.  
If $|C|=1$ and $C=\{y\}$, then
\[
G[(A\cup B)\setminus\{y\}]
\]
remains connected, since both $G[A]$ and $G[A\cup B]$ are connected.

If $|C|\ge 2$, then by the \Cref{connectivity lemma}, $C$ contains two vertices $y_1,y_2$ such that $G[C\setminus\{y_j\}]$ is connected for $j=1,2$.  
If $y_1\in N_G(A)$, then 
\[
G[(A\cup B)\setminus\{y_2\}]
\]
is connected; otherwise the same holds with $y_1$.  
Thus, there exists $x \in B$ such that $G[(A\cup B)\setminus\{x\}]$
is connected.
Consequently, $G[A\cup\{v\}\cup(B\setminus\{x\})]$ is connected.  
But by \Cref{eq:3}, $B\setminus\{x\}\subseteq \N_{\D'_{i-1}}[e'_{i}]$ since $v \notin B$. Now $e'_{i}\in \SMS(\D'_{i-1})$ imply $B\setminus\{x\}\in E(\D'_{i-1})$,
contradicting the connectivity above.  
Thus $B\in E(\D_{i-1})$. Finally, since $\D'_t$ is empty, we obtain from the \Cref{eq:2} that $\D_t=\D_r(A,G\setminus\{v\})$. 
 This completes the proof.
\end{proof}

Intuitively, the idea of the proof of the following theorem (and subsequent similar results) is to inductively eliminate vertices whose neighborhoods preserve connectivity, mimicking perfect elimination in chordal graphs.

\begin{theorem}\label{chordal 2}
    Let $r \ge 1$ be a positive integer and let $A \subseteq V(G)$ be a non-empty set such that $G[A]$ is connected. Then $\D_r(A,G)$ is a chordal clutter. 
\end{theorem}
\begin{proof}
Let $|V(G)|=n$. We use induction on $r+n$. By the given hypothesis $r\ge 1$, and since $A\neq\emptyset$, we must have $n\ge 1$.
		
		\medskip
		\noindent
		\textbf{Base case: }If $r+n=2$ then $|V(G)|=1$ and $A = V(G)$. Thus $\D_1(A,G)$ is the empty set. Therefore, it is trivially a chordal clutter.
		
		\medskip
		\noindent
		\textbf{Induction hypothesis: } Let $r,n$ be positive integers such that $r+n\ge 3$. We assume that if $r'$ and $n'$ are two positive integers such that $r'+n'<r+n$ and there exists a graph $G'$ with $|V(G')|=n'$, then $\D_{r'}(A',G')$ is a chordal clutter for any non-empty set $A'\subseteq V(G')$ such that $G'[A']$ is connected.
		
		\medskip
		\noindent
		\textbf{Inductive Step :} We divide this step into the following two cases:
		
		\medskip
		\noindent
		\textbf{Case I :} $r=1$. In this case, we have $E(\D_1(A,G)) = V(G) \setminus N_G[A]$ is the $1$-uniform complete graph on $V(G) \setminus N_G[A]$, which is chordal. 
		
		\medskip
		\noindent
		\textbf{Case II :} $r\ge 2$. Observe that the complete $r$-uniform clutter on $V(G) \setminus N_G[A]$ appears as an induced subclutter of $\D_r(A,G)$. In particular, if $\D_r(A,G)$ coincides with the complete $r$-uniform clutter on $V(G) \setminus N_G[A]$, then $\D_r(A,G)$ is  chordal. 
Now suppose there exists $F \in E(\D_r(A,G))$ such that $F \cap N_G(A) \neq \emptyset$, and let $v \in F \cap N_G[A]$. Now by \Cref{eq:2}, we can decompose
\[
E(\D_r(A,G)) = \{ F \cup \{v\} : F \in E(\D_{r-1}(A \cup \{v\}, G))\} \bigsqcup E(\D_r(A, G \setminus \{v\})).
\]
Since $(r-1) + |V(G)| < r + |V(G)|$, the induction hypothesis implies that $\D_{r-1}(A \cup \{v\},G)$ is chordal. 
By \Cref{chordal 1} there exists a sequence $e_1, \ldots, e_t$
with each $e_i \in \SMS(\D_{i-1})$, where $\D_0 = \D_r(A,G)$ and $\D_i = \D_{i-1} - e_i$, such that 
\[
\D_t = \D_r(A,\, G \setminus \{v\}).
\] Again observe that $r + |V(G \setminus \{v\})| < r + |V(G)|$, by the induction hypothesis, $\D_r(A, G\setminus \{v\})$ is chordal. Hence, the result follows.  
\end{proof}

\medskip
\begin{example}
We illustrate \Cref{chordal 4} using the \Cref{example 1}. Let $G$ be the graph in \Cref{fig:chordal graph}. Take $r=2$ and $A=\{x_1\}$. Recall that
\[
E(\D_2(A,G))
=
\{\{x_2,x_4\},\{x_4,x_5\},\{x_4,x_6\},\{x_3,x_6\},\{x_2,x_6\}\}.
\]

Consider $x_3 \in N_G(x_1)$. Then
\begin{align*}
E(\D_2(\{x_1\},G))
&=
\{F \cup \{x_3\} : F \in E(\D_1(\{x_1,x_3\},G))\}
\;\cup\;
E(\D_2(\{x_1\},G\setminus\{x_3\})) \\
&=
\{\{x_3,x_6\}\}
\;\cup\;
\{\{x_2,x_4\},\{x_4,x_5\},\{x_4,x_6\},\{x_2,x_6\}\}.
\end{align*}

\noindent
Since $\D_1(\{x_1,x_3\},G)$ is a chordal clutter, it remains to show that
$\D_2(\{x_1\},G\setminus\{x_3\})$ is chordal.

Now, $x_2 \in N_{G\setminus\{x_3\}}(x_1)$. Hence,
\begin{align*}
E(\D_2(\{x_1\},G\setminus\{x_3\}))
&=
\{F \cup \{x_2\} : F \in E(\D_1(\{x_1,x_2\},G\setminus\{x_3\}))\} \\
&\qquad \cup\;
E(\D_2(\{x_1\},G\setminus\{x_3,x_2\})) \\
&=
\{\{x_2,x_4\},\{x_2,x_6\}\}
\;\cup\;
\{\{x_4,x_5\},\{x_4,x_6\}\}.
\end{align*}

\noindent
Since $\D_1(\{x_1,x_2\},G\setminus\{x_3\})$ is chordal, it remains to verify that
$\D_2(\{x_1\},G\setminus\{x_3,x_2\})$ is chordal.

Observe that $x_5 \in N_{G\setminus\{x_3,x_2\}}(x_1)$. Thus,
\begin{align*}
E(\D_2(\{x_1\},G\setminus\{x_3,x_2\}))
&=
\{F \cup \{x_5\} : F \in E(\D_1(\{x_1,x_5\},G\setminus\{x_3,x_2\}))\} \\
&\qquad \cup\;
E(\D_2(\{x_1\},G\setminus\{x_3,x_2,x_5\})) \\
&=
\{\{x_4,x_5\}\}
\;\cup\;
\{\{x_4,x_6\}\}.
\end{align*}

\noindent
Both $\D_1(\{x_1,x_5\},G\setminus\{x_3,x_2\})$ and
$\D_2(\{x_1\},G\setminus\{x_3,x_2,x_5\})$ are chordal.

Therefore, $\D_2(\{x_1\},G)$ is a chordal clutter.
\hfill$\diamond$
\end{example}

\medskip
Thus far, we have shown that the $ r$-uniform clutter, $\D_r(A,G)$ is always a chordal clutter, provided that $A$ is a non-empty subset of $V(G)$ and $G[A]$ is connected. Note that if $A = \emptyset$, then it is not necessarily true that $\D_r(G)$ is chordal (see \Cref{Section 4 Froberg}). In the next part, we provide a sufficient condition under which $\D_r(G)$ is chordal.

\begin{lemma}\label{chordal 3}
Let $r \ge 2$ be a positive integer. Suppose $v \in V(G)$ satisfies $\Con_r\bigl(G \setminus N_{G}[v]\bigr) = \emptyset$ and $\D_{r-1}(\{v\}, G)$ is chordal. Then there exists a sequence $e_1, \ldots, e_t$
such that for each $i\in[t]$,
\[e_i \in \SMS(\D_{i-1}), \text{~where,~} \; \D_0 = \D_r(G), \qquad\D_i = \D_{i-1} - e_i, \text{~and~}
\D_t = \D_r(\, G \setminus \{v\}).
\]
\end{lemma}
\begin{proof}

If $\D_{r-1}(\{v\}, G)$ is the empty clutter, the result follows immediately from the \Cref{eq:2}. Assume that $\D_{r-1}(\{v\}, G)$ is a non-empty chordal clutter. There exists a sequence $e'_1, \ldots, e'_t$
with 
\[e'_i \in \SMS(\D'_{i-1}), \qquad \D'_0 = \D_{r-1}(\{v\},G), \qquad \D'_i = \D'_{i-1} - e_i',\] such that $\D'_t$ is an empty clutter. 

\medskip
\noindent
\textbf{Claim}
Then $e_1 , \ldots, e_t $ is a required sequence, where $e_i = e'_i \cup \{v\}$ for all $i \in [t]$.  

\medskip
\noindent
\textbf{Proof of Claim:}
Note that, for each $i \in \{0\} \cup [t]$ and the given $v$, the clutter $\D'_i$ is the $\mathcal{H}_v$-type subclutter of $\mathcal{D}_i$. For $i\in[t]$, by \Cref{nbhd equality}, 
\begin{equation}\label{eq:4}
\N_{\D'_{i-1}}[e_i'] = \N_{\D_{i-1}}[e_i' \cup \{v\}] \setminus \{v\} = \N_{\D_{i-1}}[e_i] \setminus \{v\}.
\end{equation}
Let $B$ be an $r$-subset of $\N_{\D_{i-1}}[e'_i \cup \{v\}]$.  
We want to show $B \in E(\D_{i-1})$.

If $v \in B$, then 
\[
B\setminus\{v\} \subseteq \N_{\D'_{i-1}}[e'_i],
\]
and since $e'_i \in \SMS(\D'_{i-1})$, we have $B\setminus\{v\}\in E(\D'_{i-1})$, hence $B\in E(\D_{i-1})$. 
Assume $v \notin B$. If $B \notin E(\D_{i-1})$ that is $B \in E((\D_{i-1})^c)$ then $G[B]$ is connected, because \[E((\D_{i-1})^c) = E({\D_r(G)}^c) \cup \big(\bigcup\limits_{j=1}^{i-1} W_j \big),\] where $W_j = \{F \supseteq e_j : |F|=r, G[F] \text{~ is disconnected ~} \}$.
Now, we aim to show that there exists $x \in B$ ssuch that $G[B \setminus \{x\}]$ is connected. 
By the choice of $v$, we have $B \cap N_G[v] \neq \emptyset$. 
Also, by \Cref{connectivity lemma}, we see that $G[B]$ has at least two vertices, say $y_1,y_2$ such that $G[B \setminus\{y_i\}]$ is connected for each $i\in\{1,2\}$. If $y_1\in N_G(v)$, then one can see that $G[B \setminus \{y_2\}]$ is connected. On the other hand, if $y_1 \notin N_G(v)$, then  $G[(B \setminus \{y_1\}]$ is connected, by similar reason. Thus, there exists a vertex $x \in B$ such that $G[B \setminus \{x\}]$ is connected. Consequently, $G[\{v\} \cup B \setminus \{x\}]$ is connected. 
But by \Cref{eq:4}, $B \setminus \{x\} \subseteq \N_{\D'_{i-1}}[e_i]$ since $v \notin B$. Now,  $e_i' \in \SMS(\D'_{i-1})$ imply $B \setminus \{x\} \in E(\D'_{i-1})$, which contradicts the connectivity above. Therefore, $B \in E(\D_{i-1})$. After $t$ steps, $\D'_t$ is an empty clutter, we obtain from the \Cref{eq:2} that $\D_t = \D_r(G\setminus \{v\})$.
This completes the proof.
\end{proof}

\begin{theorem}\label{chordal 4}
    Let $G$ be a graph and let $r \ge 2$ be a positive integer. 
    Consider the following two statements:
    \begin{enumerate}
        \item There exists an ordering of the vertices $x_1, x_2, \ldots, x_m$ of $G$ such that:
        \begin{enumerate}
            \item For each $i \in [m-1] \cup \{0\}$,
            \[
                E\left(\Con_r\bigl(G_i \setminus N_{G_i}[x_{i+1}]\bigr)\right) = \emptyset,
            \]
            where $G_0 = G$ and, $G_i = G \setminus \{x_1, \ldots, x_i\}$ for each $i \ge 1$.
            
            \item $m$ is the least positive integer such that $\D_r(G \setminus \{x_1,x_2,\ldots,x_m\}))$ is the empty clutter.
        \end{enumerate}
        \item The $r$-uniform clutter $\D_r(G)$ is chordal. 
    \end{enumerate}
    Then the condition \textup{(1)} implies \textup{(2)}.
\end{theorem}
\begin{proof} 
    We proceed by induction on $|V(G)|$. If $|V(G)| \le r$ then $\D_r(G)$ is either the empty clutter or has a single circuit, which is trivially a chordal clutter. Assume that, if $|V(G')| < |V(G)|$ and $G'$ satisfies the condition \emph{(1)} then the $r$-uniform clutter $\D_r(G')$ is chordal. Now, consider the vertex $x_1 \in V(G)$ from the sequence of \emph{(1)} for which $E(\Con_r(G \setminus N_G[x_1])) = \emptyset$. By \Cref{eq:2} we have
    \[E(D_r(G)) = \{F \cup \{x_1\} : F \in E(\D_{r-1}(\{x_1\},G))\} \bigsqcup E(\D_r(G\setminus \{x_1\})).\] By \Cref{chordal 2}, $\D_{r-1}(\{x_1\},G)$ is chordal clutter and thus, in view of \Cref{chordal 3}, there exists a sequence $e_1, \ldots, e_t$
with each 
\[e_i \in \SMS(\D_{i-1}), \qquad \D_0 = \D_r(G), \qquad \D_i = \D_{i-1} - e_i,\] such that 
\[
\D_t = \D_r(\, G \setminus \{x_1\}).
\] As, $|V(G \setminus \{x_1\}| < |V(G)|$, and $G \setminus \{x_1\}$ satisfies the condition \emph{(1)}, by the induction hypothesis we conclude that $\D_r(G \setminus \{x_1\})$ is a chordal clutter. Hence, the $r$-uniform clutter $\D_r(G)$ is chordal. 
\end{proof}

\medskip
\begin{example}
We illustrate \Cref{chordal 4} using the graph in \Cref{fig:chordal graph}.
Let $r=3$.

We first compute the circuit set of the clutter $\Con_3(G)$:
\[
E(\Con_3(G))
=
\{ F \subseteq V(G) : |F|=3 \text{ and } G[F] \text{ is connected} \}.
\]

The $3$-subsets $F$ for which $G[F]$ is connected are
\[
\begin{aligned}
\{
&\{x_1,x_2,x_3\},\{x_1,x_2,x_5\},\{x_1,x_3,x_4\},\{x_1,x_3,x_5\},\{x_1,x_5,x_6\},\\
&\{x_2,x_3,x_4\},\{x_2,x_3,x_5\},\{x_2,x_5,x_6\}
\}.
\end{aligned}
\]

Thus these sets form the circuits of the clutter $\Con_3(G)$.

The circuit set of $\D_3(G)$ is
\begin{align*}
\{
&\{x_1,x_2,x_4\},\{x_1,x_2,x_6\},\{x_1,x_3,x_6\},\{x_1,x_4,x_5\},\{x_1,x_4,x_6\},\\
&\{x_2,x_3,x_6\},\{x_2,x_4,x_5\},\{x_2,x_4,x_6\},\{x_3,x_4,x_5\},\{x_3,x_4,x_6\},\\
&\{x_3,x_5,x_6\},\{x_4,x_5,x_6\}
\}.
\end{align*}

We now construct an ordering of the vertices satisfying
Theorem~\ref{chordal 4}(1).

\medskip
\noindent
\textbf{Step 1.}
For the vertex $x_1$, every circuit of $\Con_3(G)$ contains a vertex from $N_G[x_1]$.
Hence,
\[
E(\Con_3(G \setminus N_G[x_1])) = \emptyset.
\]

\medskip
\noindent
\textbf{Step 2.}
Consider the graph $G_1 = G \setminus \{x_1\}$. Then
\[
E(\Con_3(G_1))
=
\{\{x_2,x_3,x_4\},\{x_2,x_3,x_5\},\{x_2,x_5,x_6\}\}.
\]
Each of these sets intersects $N_{G_1}[x_2]$, and therefore
\[
E(\Con_3(G_1 \setminus N_{G_1}[x_2]))=\emptyset.
\]

\medskip
\noindent
\textbf{Step 3.}
Let $G_2 = G \setminus \{x_1,x_2\}$. Then
\[
E(\D_3(G_2))
=
\{\{x_3,x_4,x_5\},\{x_3,x_4,x_6\},\{x_3,x_5,x_6\},\{x_4,x_5,x_6\}\}.
\]
Moreover,
\[
E(\Con_3(G_2))=\emptyset.
\]

For any vertex $v \in \{x_3,x_4,x_5,x_6\}$,
\[
E(\Con_3(G_2 \setminus N_{G_2}[v]))=\emptyset.
\]
Choose $v=x_4$. Then
\[
E(\D_3(G\setminus\{x_1,x_2,x_4\}))=\{\{x_3,x_5,x_6\}\}.
\]

\medskip
\noindent
\textbf{Step 4.}
Let $G_3 = G\setminus\{x_1,x_2,x_4\}$. For $x_6$,
\[
E(\Con_3(G_3 \setminus N_{G_3}[x_6]))=\emptyset,
\]
and
\[
E(\D_3(G\setminus\{x_1,x_2,x_4,x_6\}))=\emptyset.
\]

Thus the ordering $(x_1,x_2,x_4,x_6)$ satisfies condition~\emph{(1)} of
\Cref{chordal 4}. Consequently, $\D_3(G)$ is a chordal clutter.
\hfill$\diamond$
\end{example}

\medskip
Our objective now is to determine the conditions under which the complex $\Sigma_r(A,G)$ is vertex decomposable. 
We begin by identifying a shedding vertex. 
Recall that for a simplicial complex $\Delta$, an element $x\in V(\Delta)$ is called a {\it shedding vertex} of $\Delta$ if no face of $\link_{\Delta}(x)$ constitutes a facet of $\del_{\Delta}(x)$. 
Alternatively, $x$ is considered a shedding vertex of $\Delta$ if and only if every facet of $\del_{\Delta}(x)$ is a facet of $\Delta$. 
This characterization is used in the next proposition to establish a combinatorial condition for identifying a shedding vertex of $\Sigma_r(A,G)$.
	
\begin{proposition}\label{Eq. Condition for a shedding vertex}
Let $G$ be a graph, $r$ a fixed positive integer, and $x \in V(\Sigma_r(A,G))$. The following conditions are equivalent:
\begin{enumerate}
	\item[(i)] $x$ is a shedding vertex of $\Sigma_r(A,G)$.		
	\item[(ii)] For every $F \in \Supp_r(A,G)$ with $x \notin F$, there exists a vertex $y \in F$ such that $(F \setminus \{y\}) \cup \{x\} \in \mathrm{Supp}_r(A,G)$.
\end{enumerate}
\end{proposition}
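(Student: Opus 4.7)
The plan is to invoke the well-known characterization, noted in the excerpt, that $x$ is a shedding vertex of a simplicial complex $\Delta$ if and only if every facet of $\del_{\Delta}(x)$ is already a facet of $\Delta$. For $\Delta=\Sigma_r(A,G)$ the facets are precisely the sets $F^{c}\setminus A=V(G)\setminus(F\cup A)$ for $F\in\Supp_r(A,G)$, and such a facet fails to contain $x$ exactly when $x\in F$ (using $x\in V(\Sigma_r(A,G))\subseteq V(G)\setminus A$). Both implications then reduce to a controlled ``vertex swap'' $F\mapsto(F\setminus\{y\})\cup\{x\}$ inside $\Supp_r(A,G)$.

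For $(ii)\Rightarrow(i)$, I would take a facet $\tau$ of $\del_{\Delta}(x)$ and pick $F_{0}\in\Supp_r(A,G)$ with $\tau\subseteq F_{0}^{c}\setminus A$. If $x\in F_{0}$, then $F_{0}^{c}\setminus A$ itself lies in $\del_{\Delta}(x)$, so maximality of $\tau$ forces $\tau=F_{0}^{c}\setminus A$, a facet of $\Delta$. If instead $x\notin F_{0}$, apply (ii) to get $y\in F_{0}$ with $F_{1}:=(F_{0}\setminus\{y\})\cup\{x\}\in\Supp_r(A,G)$; the key identity $F_{1}^{c}\setminus A=((F_{0}^{c}\setminus A)\setminus\{x\})\cup\{y\}$ shows this new facet lies in $\del_{\Delta}(x)$ and still contains $\tau$, since $x\notin\tau$ by assumption and $y\notin\tau$ because $\tau\subseteq F_{0}^{c}$ while $y\in F_{0}$. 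Maximality again forces $\tau=F_{1}^{c}\setminus A$, a facet of $\Delta$.

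For $(i)\Rightarrow(ii)$, I would fix any $F\in\Supp_r(A,G)$ with $x\notin F$ and set $\tau:=(F^{c}\setminus A)\setminus\{x\}\in\del_{\Delta}(x)$, then extend $\tau$ to a facet $\tau'$ of $\del_{\Delta}(x)$. By hypothesis (i), $\tau'$ is a facet of $\Delta$, hence $\tau'=F_{1}^{c}\setminus A$ for some $F_{1}\in\Supp_r(A,G)$, and $x\notin\tau'$ forces $x\in F_{1}$. Comparing cardinalities ($|\tau'|=|\tau|+1$) yields $\tau'=\tau\cup\{y\}$ for a unique vertex $y$, and unwinding the complements shows that $y\in F$ and $F_{1}=(F\setminus\{y\})\cup\{x\}$, exactly the witness required by (ii). The whole argument is essentially set-theoretic bookkeeping; the only mildly delicate step, in $(ii)\Rightarrow(i)$, is the verification that the swapped facet $F_{1}^{c}\setminus A$ genuinely contains $\tau$, which rests on the disjointness conditions $F_{0}\cap\tau=\emptyset$ and $x\notin\tau$ that come for free from the definitions.
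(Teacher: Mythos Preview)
Your proof is correct and follows essentially the same route as the paper: both directions hinge on the characterization that $x$ is shedding iff every facet of $\del_{\Sigma_r(A,G)}(x)$ is a facet of $\Sigma_r(A,G)$, together with the observation that facets are the sets $F^{c}\setminus A$ and the swap $(F\setminus\{y\})\cup\{x\}$ on the $\Supp_r$-side corresponds to replacing $x$ by $y$ on the facet side. The only cosmetic difference is that in $(i)\Rightarrow(ii)$ you phrase the cardinality step as $|\tau'|=|\tau|+1$, whereas the paper deduces $|F\setminus H|=1$ directly from $H\subseteq F\cup\{x\}$ with $|H|=|F|$; these are the same count.
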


\begin{proof}	
The goal is to show \emph{(i) $\Rightarrow$ (ii)}. 
Let $x$ be a shedding vertex of $\Sigma_r(A,G)$, and $F \in \mathrm{Supp}_r(A,G)$ such that $x \notin F$. 
Note $x \in F^{c}\setminus A$, where $F^c\setminus A$ is a facet of $\Sigma_r(A,G)$. 
Also, $F^{c} \setminus (A \cup \{x\})$ is a face of $\del_{\Sigma_r(A,G)}(x)$, and since $x$ is a shedding vertex of $\Sigma_r(A,G)$, $F^{c} \setminus (A \cup \{x\})$ is not a facet of $\del_{\Sigma_r(A,G)}(x)$. 
In particular, there exists some $H \in \mathrm{Supp}_r(A,G)$ with $x \in H$ such that 
\[
F^{c} \setminus (A \cup \{x\}) \subseteq H^{c} \setminus A.
\]
Since $H\cap A=\emptyset$ we must have $H \subset F \cup \{x\}$. Furthermore, since $|F|=|H|=r$ and $x\in H\setminus F$, we find that $F \setminus H = \{y\}$ for some $y \in V(G)$. Thus $H=(F \setminus \{y\}) \cup \{x\} \in \mathrm{Supp}_r(A,G)$, as desired.
		
To prove \emph{(ii) $\Rightarrow$ (i)}, it suffices to verify that every facet of $\mathrm{del}_{\Sigma_r(A,G)}(x)$ is also a facet of $\Sigma_r(A,G)$. In other words, we need to show that $\mathcal{F}(\mathrm{del}_{\Sigma_r(A,G)}(x)) \subseteq \mathcal{F}(\Sigma_r(A,G))$.
Observe that
		\[
		\mathcal{F}(\mathrm{del}_{\Sigma_r(A,G)}(x)) \subseteq \{F^{c} \setminus (A \cup \{x\}) : F \in \mathrm{Supp}_r(A,G)\} 
		\]
Thus, we want to show that  $\mathcal{F}(\mathrm{del}_{\Sigma_r(A,G)}(x)) = \{F^{c}\setminus A : F \in \Supp_r(A,G) \; \text{and} \; x\in F\}$.
		
If $x\in F$, then $F^{c} \setminus (A \cup \{x\}) = F^{c} \setminus A$, leading to $F^{c} \setminus A\in \mathcal{F}(\Sigma_r(A,G))$. 
Now, consider when $x \notin F$. 
Based on the assumption, there is a $y\in V(G)$ such that $H := (F \setminus \{y\}) \cup \{x\} \in \mathrm{Supp}_r(A,G)$. 
Given $x\notin F\cup A$, it follows that $y\in F$. Note $H^{c} \setminus A = (F^{c} \cup \{y\}) \setminus (A \cup \{x\})$, thus $F^{c} \setminus (A \cup \{x\}) \subseteq H^{c} \setminus A$. 
Since $H^c\setminus A\in \F( \del_{\Sigma_r(A,G)}(x))$, we have $F^c\setminus (A\cup\{x\})\notin \mathcal{F}(\mathrm{del}_{\Sigma_r(A,G)}(x))$, resulting in $\mathcal{F}(\mathrm{del}_{\Sigma_r(A,G)}(x)) = \{F^{c}\setminus A : F \in \Supp_r(A,G) \; \text{and} \; x\in F\}$, completing the proof.   
\end{proof}

\medskip
\begin{example}
We illustrate \Cref{Eq. Condition for a shedding vertex} using the \Cref{example 1}.  
Let $G$ be the graph in \Cref{fig:chordal graph}, take $r=2$, and
$A=\{x_1\}$. Recall that
\[
\Supp_2(A,G)
= \bigl\{
\{x_2,x_3\},\{x_2,x_5\},\{x_3,x_4\},\{x_3,x_5\},\{x_5,x_6\}
\bigr\}.
\]

\noindent
We verify condition~\emph{(ii)} of \Cref{Eq. Condition for a shedding vertex}
for the vertex $x_3 \in V(\Sigma_2(A,G))$.

\noindent
Consider $F \in \Supp_2(A,G)$ with $x_3 \notin F$.

\begin{itemize}
    \item If $F=\{x_2,x_5\}$, then choosing $y=x_5$ gives
    \[
    (F\setminus\{x_5\})\cup\{x_3\}
    =\{x_2,x_3\}\in \Supp_2(A,G).
    \]

    \item If $F=\{x_5,x_6\}$, then choosing $y=x_6$ yields
    \[
    (F\setminus\{x_6\})\cup\{x_3\}
    = \{x_3,x_5\} \in \Supp_2(A,G).
    \]
\end{itemize}

\noindent
Thus, for every $F\in \Supp_2(A,G)$ with $x_3\notin F$, there exists
$y\in F$ such that
\[
(F\setminus\{y\})\cup\{x_3\}\in \Supp_2(A,G).
\]
Hence, by \Cref{Eq. Condition for a shedding vertex},
$x_3$ is a shedding vertex of\/ $\Sigma_2(A,G)$.

\medskip

Next, we verify condition~\emph{(ii)} of \Cref{Eq. Condition for a shedding vertex}
for the vertex $x_4 \in V(\Sigma_2(A,G))$.

\noindent
Consider $F=\{x_5,x_6\}$. We have $x_4\notin F$, but
\[
(F\setminus\{x_5\})\cup\{x_4\}
= \{x_4,x_6\}\notin \Supp_2(A,G)
\]
and
\[
(F\setminus\{x_6\})\cup\{x_4\}
= \{x_4,x_5\}\notin \Supp_2(A,G).
\]

\noindent
Thus, there exists $F\in \Supp_2(A,G)$ with $x_4\notin F$ such that, for every
$y\in F$,
\[
(F\setminus\{y\})\cup\{x_4\}\notin \Supp_2(A,G).
\]
Hence, by \Cref{Eq. Condition for a shedding vertex},
$x_4$ is not a shedding vertex of\/ $\Sigma_2(A,G)$.
\hfill$\diamond$
\end{example}

\medskip
Next, we describe the link and the deletion of a vertex in simplicial complexes $\Sigma_r(A,G)$. 
If $\Delta$ is a simplicial complex involving $\F(\Delta)=\{\Gamma_1,\ldots,\Gamma_t\}$ and $x\in V(\Delta)$, the link of $x$ in $\Delta$ is defined by $\lk_{\Delta}(x)=\l \Gamma_i\setminus \{x\}\mid \Gamma_i\in \F(\Delta)\text{ and }x\in \Gamma_i\r$. 
Furthermore, if $x$ is a shedding vertex of $\Delta$, then the deletion of $x$ in $\Delta$ is $\del_{\Delta}(x)=\l \Gamma_i\mid \Gamma_i\in\F(\Delta) \text{ and } x\notin \Gamma_i\r$. 
Using these definitions, the following results can be shown.
    
	\begin{proposition}\label{Eq. conditions for lk and del}
		Let $G$ be a graph, $r$ a positive integer and $x \in V(\Sigma_r(A,G))$. Then the following statements hold:
		\begin{enumerate}
			\item For $r \geq 1$, $\lk_{\Sigma_r(A,G)}(x) = \Sigma_r(A,G \setminus x) =  \langle F^{c} \setminus A : F \in \mathrm{Supp}_{r}(A,G\setminus x) \rangle$.
			\item  If $x$ is a shedding vertex of $\Sigma_r(A,G)$, then
			\begin{enumerate}
				\item $\del_{\Sigma_r(A,G)}(x) = \Sigma_{r-1}(A \cup \{x\},G) =  \langle F^{c} \setminus (A\cup \{x\}) : F \in \mathrm{Supp}_{r-1}(A\cup \{x\},G) \rangle$ when $r\ge 2$.
				
				\item $G[A\cup\{x\}]$ is connected when $r=1$, and in this case, $\del_{\Sigma_1(A,G)}(x) =\Delta_{V(G)\setminus (A\cup\{x\})}$.
			\end{enumerate}
		\end{enumerate}
	\end{proposition}
	\begin{proof} 
		(1) By the above characterization of the link of a vertex, we obtain the following
		\[
		\lk_{\Sigma_r(A,G)}(x) = \langle F^c \setminus (A \cup \{x\}) : F \in \mathrm{Supp}_r(A, G),\ x \notin F \rangle.
		\]
		Observe that $x\in V(\SrAG)$ implies $A\subseteq V(G\setminus x)$. Thus we have a one-to-one correspondence between the two sets $\{F\subseteq V(G): x\notin F , |F|=r, F\cap A=\emptyset, G[F\cup A]\text{ is connected}\}$ and $\{F\subseteq V(G\setminus x): |F|=r, F\cap A=\emptyset, (G\setminus x)[F\cup A]\text{ is connected}\}$ via the inclusion map. Hence, $\Supp_r(A,G\setminus x)=\{F\in\Supp_r(A,G): x\notin F\}$. Consequently, 
		\begin{align*}
			\lk_{\Sigma_r(A,G)}(x) &=\langle F^c \setminus (A \cup \{x\}) : F \in \mathrm{Supp}_r(A, G),\ x \notin F \rangle\\
			&=\langle F^c\setminus A: F\in\Supp_r(A, G\setminus x) \rangle. 
		\end{align*} 
	
		(2) Since $x$ is a shedding vertex of $\Sigma_r(A,G)$, we have the following.
		\begin{align*}
			\del_{\Sigma_r(A,G)}(x) &= \langle F^c \setminus A : F \in \mathrm{Supp}_r(A, G), x \notin F^c\setminus A \rangle\\
			&=\l F^c \setminus A : F \in \mathrm{Supp}_r(A, G), x \in F\r.
		\end{align*}
		First, we consider the case $r \ge 2$. Let us consider the following two sets:
		\begin{align*}
			S_1&=\{F\subseteq V(G)\mid |F|=r,F\cap A=\emptyset,x\in F,G[F\cup A]\text{ is connected}\}\\
			S_2&=\{F\subseteq V(G)\mid |F|=r-1,F\cap (A\cup\{x\})=\emptyset,x\notin F,G[F\cup (A\cup\{x\})]\text{ is connected}\}.
		\end{align*}
		Observe that $S_1=\{F\in \Supp_r(A,G): x\in F\}$,  $S_2=\Supp_{r-1}(A\cup\{x\},G)$, and the map $F\mapsto F\setminus\{x\}$ is a bijection between $S_1$ and $S_2$. Moreover, $\l F^c\setminus A: F\in S_1\r=\l F^c\setminus (A\cup\{x\}): F\in S_2\r$. Thus,
		\[
		\del_{\Sigma_r(A,G)}(x) = \Sigma_{r-1}(A \cup \{x\},G).
		\]
		
		Now, suppose $r=1$. Then it follows from \Cref{Eq. Condition for a shedding vertex}, $ G[A\cup\{x\}]$ is connected. Furthermore,
		\begin{align*}
			\del_{\Sigma_1(A,G)}(x)&=\l V(G)\setminus (A\cup\{x\})\r\\
			&=\Delta_{V(G)\setminus (A\cup\{x\})}.
		\end{align*}
	\end{proof}
	\begin{remark}
		Note that if $A=\emptyset$, then in statement (ii) of (2) in \Cref{Eq. conditions for lk and del} the condition $x\in N_G(A)$ is no longer required, since $G[\{x\}]$ is always a connected graph for each $x\in V(G)$. In this case, we also have $\del_{\Sigma_1(\emptyset,G)}(x) =\Delta_{V(G)\setminus \{x\}}$
	\end{remark}
	We now proceed to show that $\SrAG$ is a vertex decomposable simplicial complex when $A$ is a non-empty subset of $V(G)$ such that $G[A]$ is connected. 

	\begin{theorem}\label{Vertex decoposable}
		Let $G$ be a graph, and let $A \subseteq V(G)$ be a non-empty set such that $G[A]$ is connected. Then for each $r \geq 1$, the simplicial complex $\Sigma_r(A,G)$ is vertex decomposable. 
	\end{theorem}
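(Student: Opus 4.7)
The plan is to proceed by induction on $|V(G) \setminus A|$. The base case $|V(G) \setminus A| = 0$ forces $\Supp_r(A,G) = \emptyset$ for every $r \geq 1$, so $\SrAG$ is void and vertex decomposable. In the induction step, the two degenerate situations in which $\SrAG$ is void (when $\Supp_r(A,G)$ is empty) or consists only of $\{\emptyset\}$ are vertex decomposable by convention, so I may assume $V(\SrAG) \neq \emptyset$.

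The main move is to pick a shedding vertex $x \in V(\SrAG) \cap N_G(A)$. To verify the shedding condition of Proposition~\ref{Eq. Condition for a shedding vertex}, fix $F \in \Supp_r(A,G)$ with $x \notin F$ and set $H := G[F \cup A \cup \{x\}]$. Since $G[F \cup A]$ is connected and $x \in N_G(A)$, the graph $H$ is connected and $U := A \cup \{x\}$ induces a connected subgraph of $H$. I would then extend a spanning tree of $G[U]$ to a spanning tree $T$ of $H$; as $F = V(H) \setminus U$ is nonempty, every component of the forest $T[F]$ hanging off $T[U]$ contributes a leaf of $T$ that lies in $F$. For such a leaf $y$, $T \setminus y$ remains connected, so $G[(F \setminus \{y\}) \cup A \cup \{x\}]$ is connected and $(F \setminus \{y\}) \cup \{x\} \in \Supp_r(A,G)$, as required.

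Given the shedding vertex $x$, Proposition~\ref{Eq. conditions for lk and del} identifies $\lk_{\SrAG}(x) = \Sigma_r(A, G \setminus x)$ and, for $r \geq 2$, $\del_{\SrAG}(x) = \Sigma_{r-1}(A \cup \{x\}, G)$; when $r = 1$, the deletion is the simplex $\Delta_{V(G)\setminus (A \cup \{x\})}$. In the link, $G[A]$ is unchanged and still connected; in the deletion (for $r \geq 2$), $G[A \cup \{x\}]$ is connected because $x \in N_G(A)$. In both cases $|V(\cdot) \setminus A'|$ has strictly decreased from $|V(G) \setminus A|$, so the inductive hypothesis supplies vertex decomposability of the link and the deletion, whence the desired decomposition of $\SrAG$.

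The principal obstacle is the residual case $V(\SrAG) \cap N_G(A) = \emptyset$, when no candidate shedding vertex in $N_G(A)$ is available. First I would observe that $N_G(A) \neq \emptyset$, for otherwise $G[F \cup A]$ could not be connected for any $F$ disjoint from $A$, forcing $\Supp_r(A,G) = \emptyset$. Any $x \in N_G(A) \setminus V(\SrAG)$ must then lie in every $F \in \Supp_r(A,G)$, so with $S := N_G(A)$ and $s := |S|$, every such $F$ contains $S$. The substitution $F \mapsto F \setminus S$ is then a bijection with $\Supp_{r-s}(A \cup S, G)$ that yields the identification $\SrAG = \Sigma_{r-s}(A \cup S, G)$ of simplicial complexes. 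Since $G[A \cup S]$ is connected and $|V(G) \setminus (A \cup S)| = |V(G) \setminus A| - s$ is strictly smaller, the inductive hypothesis handles the case $r - s \geq 1$; in the corner case $r = s$, the reduced complex is the simplex on $V(G) \setminus (A \cup S)$, vertex decomposable outright.
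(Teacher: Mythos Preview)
Your proof is correct and follows the same overall strategy as the paper: locate a shedding vertex in $N_G(A)$, invoke Propositions~\ref{Eq. Condition for a shedding vertex} and~\ref{Eq. conditions for lk and del} to identify the link and deletion, and handle the residual case $N_G(A)\subseteq F$ for all $F\in\Supp_r(A,G)$ by passing to $\Sigma_{r-|N_G(A)|}(A\cup N_G(A),G)$. Two technical choices differ. First, you induct on $|V(G)\setminus A|$ rather than on $r+|V(G)|$; this is slightly cleaner since both link and deletion decrease the parameter by exactly one. Second, to verify the shedding condition you extend a spanning tree of $G[A\cup\{x\}]$ to one of $G[F\cup A\cup\{x\}]$ and pick a leaf in $F$, whereas the paper argues by cases on the connected components of $G[F]$ and invokes Lemma~\ref{connectivity lemma}. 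Your spanning-tree argument is a bit more direct and avoids the case split; the paper's component analysis is closer in spirit to how Lemma~\ref{connectivity lemma} is used elsewhere. Both routes are short and neither offers a substantive generalization over the other.
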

	\begin{proof}
		Let $|V(G)|=n$. We prove that $\SrAG$ is vertex decomposable using induction on $r+n$. By the given hypothesis $r\ge 1$, and since $A\neq\emptyset$, we must have $n\ge 1$.
		
		\medskip
		\noindent
		\textbf{Base case: }Let $r+n=2$. Thus, in this case, we have $|V(G)|=1$. Since $A \subseteq V(G)$ is non-empty, it must be the case that $A=V(G)$. In this setting, $\mathrm{Supp}_1(V(G),G)$ is the empty set. Therefore, the complex $\Sigma_1(A,G)$ is the void complex, which is trivially vertex decomposable.
		
		\medskip
		\noindent
		\textbf{Induction hypothesis: } Let $r,n$ be positive integers such that $r+n\ge 3$. We assume that if $r'$ and $n'$ are two positive integers such that $r'+n'<r+n$ and there exists a graph $G'$ with $|V(G')|=n'$, then $\Sigma_{r'}(A',G')$ is a vertex decomposable simplicial complex for any non-empty $A'\subseteq V(G')$ such that $G[A']$ is connected.
		
		\medskip
		\noindent
		\textbf{Inductive Step :} We divide this step into the following two cases:
		
		\medskip
		\noindent
		\textbf{Case I :} $r=1$. In this case, we have $\mathrm{Supp}_1(A,G) =N_G(A)$. Thus, \[
		\Sigma_1(A,G) = \langle V(G) \setminus (A \cup \{v\}) : v \in N_G(A) \rangle.
		\]
		Now, if |$N_G(A)| \le 1$, then $|\mathrm{Supp}_1(A,G)| \le 1$, and hence $\Sigma_1(A,G)$ is either the void complex or the empty complex, or a simplex, and hence a vertex decomposable simplicial complex. Next, consider the case when |$N_G(A)| \ge 2$. Fix any $x \in N_G(A)$. Observe that $x$ is a shedding vertex of $\Sigma_1(A,G)$. Indeed, for each $F\in \Supp_1(A,G)$ we have $|F|=1$. Fix such an $F$ with $x\notin F$. Suppose $F=\{y\}$. Then $(F\setminus\{y\})\cup\{x\}=\{x\}\in\Supp_1(A,G)$ since $G[A]$ is connected. Now, by \Cref{Eq. conditions for lk and del}, $\del_{\Sigma_1(A,G)}(x) =\Delta_{V(G)\setminus (A\cup\{x\})}$ is a simplex, and thus vertex decomposable. On the other hand, we have 
        \[\lk_{\Sigma_1(A,G)}(x) = \Sigma_1(A,G\setminus x),\] again by \Cref{Eq. conditions for lk and del}. Since $1+|V(G\setminus x)|<1+n$, using the induction hypothesis we see that $\lk_{\Sigma_1(A,G)}(x)$ is also vertex decomposable. Consequently, $\Sigma_1(A,G)$ is a vertex decomposable simplicial complex.
		
		\medskip
		\noindent
		\textbf{Case II :} $r\ge 2$. As before, if |$\Supp_r(A,G)| \le 1$, then $\SrAG$ is the void complex or the empty complex or a simplex, and thus trivially vertex decomposable. Therefore, we may assume that |$\Supp_r(A,G)| \ge 2$. Observe that for each $F\in\Supp_r(A,G)$, $F\cap N_G(A)\neq\emptyset$, and in particular, $N_G(A)\neq\emptyset$. Let $\Supp_r(A,G)=\{F_1,\ldots,F_k\}$. First consider the case when $N_G(A)\subseteq F_i$ for all $i\in[k]$. Thus $|N_G(A)|\le r$. If $|N_G(A)|=r$, then $|\Supp_r(A,G)|=1$, thus $\SrAG$ is a simplex, and hence a vertex decomposable complex. Now suppose $|N_G(A)|<r$. In this case, proceeding as in the proof of \Cref{Eq. conditions for lk and del} we observe that each $F\in\Supp_r(A,G)$ is in one-to-one correspondence with each $H\in \Supp_{r-|N_G(A)|}(A\cup N_G(A),G)$. Thus, 
		\[\langle F^{c} \setminus A : F \in \mathrm{Supp}_r(A,G) \rangle = \langle H^{c} \setminus (A \cup N_G(A)): H \in \mathrm{Supp}_{r-|N_G(A)|}(A \cup N_G(A)),G)\rangle \] 
		Hence, $\SrAG=\Sigma_{r-|N_G(A)|}(A\cup N_G(A),G)$. Since $N_G(A)\neq\emptyset$, by the induction hypothesis we have that $\SrAG$ is vertex decomposable.
		
		Next, we assume, without loss of generality, that $N_G(A)\not\subseteq F_1$. Choose some $x\in N_G(A)\setminus F_1$. Our first aim is to show that $x$ is a shedding vertex, and we show this by using \Cref{Eq. Condition for a shedding vertex}. Let $C$ be a connected component of $G[F]$. If $|C|=1$, and say $C=\{y\}$, then it is easy to see that 
        \[(F \setminus \{y\}) \cup \{x\} \in \mathrm{Supp}_r(A,G),\]
        since $G[A]$ and $G[F\cup A]$ both are connected. If $|C|\ge 2$, then by \Cref{connectivity lemma}, we see that $C$ has at least two vertices, say $y_1,y_2$ such that $G[C\setminus\{y_i\}]$ is connected for each $i\in\{1,2\}$. If $y_1\in N_G(A)$, then one can see that 
        \[
        (F \setminus \{y_2\}) \cup \{x\} \in \mathrm{Supp}_r(A,G),
        \]
        since $G[A]$ and $G[F\cup A]$ both are connected. On the other hand, if $y_1\notin N_G(A)$, then  $(F \setminus \{y_1\}) \cup \{x\} \in \mathrm{Supp}_r(A,G)$, by similar reason. Thus, $x$ is a shedding vertex of $\SrAG$. 
		
		Now, by \Cref{Eq. conditions for lk and del}, we have
		\[
		\lk_{\Sigma_r(A,G)}(x) = \Sigma_r(A,G \setminus x) \text{  and  }\del_{\Sigma_r(A,G)}(x) = \Sigma_{r-1}(A \cup \{x\},G).
		\] 
		Observe that $r+|V(G\setminus x)|<r+n$ and $r-1+|V(G)|<r+n$. Thus, by the induction hypothesis, we can say that both $\lk_{\Sigma_r(A, G)}(x)$ and $\del_{\Sigma_r(A,G)}(x)$ are vertex decomposable. Consequently, $\SrAG$ is a vertex decomposable simplicial complex, as desired. 
	\end{proof}

\begin{corollary}
    Let $G$ be a graph, and let $A \subseteq V(G)$ be a non-empty set such that $G[A]$ is connected. Then the following statements are equivalent:
    \begin{enumerate}
        \item $\Sigma_r(A,G)$ is vertex decomposable.
        \item $\Sigma_r(A,G)$ is shellable.
        \item $\Sigma_r(A,G)$ is Cohen-Macaulay.
        \item $\D_r(A,G)$ is chordal. 
    \end{enumerate}
\end{corollary}
\begin{proof}
    Directly follows from \Cref{chordal 2}, \Cref{Vertex decoposable} and \cite[Theorem 3.3]{SBCC}.
\end{proof}

\begin{remark}\label{betti_splitting}
    When $\Sigma_r(A,G)$ is vertex decomposable the dual ideal is vertex splittable, which means it admits a Betti splitting. 
    It follows from \cite[Theorem 2.3, Theorem 2.8, Remark 2.10]{MoradiAhang2016} that graded Betti numbers, projective dimension and regularity of the $r$-connected ideals can be computed recursively. 
    The remark also applies to the graph classes considered in \Cref{Section 4 Froberg}.
\end{remark}
    
	\begin{remark}\label{remark 1}
		If $A=\emptyset$, then it is easy to see that $\Supp_1(\emptyset, G)=V(G)$. Thus $\Sigma_1(\emptyset, G)$ is the boundary of a simplex, and hence vertex decomposable, by \cite[Proposition 2.2]{ProBill1980}. On the other hand, if $r\ge 2$, then one cannot proceed as in \Cref{Vertex decoposable}, as there is no such choice of shedding vertex in $x\in N_G(A)$. In this case, we require a different approach to determine whether $\Sigma_r(\emptyset,G)$ is vertex decomposable or not.
	\end{remark}
	
	Let $W\subseteq V(G)$ be such that $G[W]$ is connected, and $x\in V(G)\setminus W$. Then $W\cap N_G(x)\neq\emptyset$ if and only if there exists some $y\in W$ such that $G[(W\setminus\{y\})\cup\{x\}]$ is connected. Indeed, if $|W|=1$, then the ``only if'' part is trivially satisfied. Moreover, if $|W|\ge 2$, then using \Cref{connectivity lemma} we can find $y\in W$ such that $G[(W\setminus\{y\})\cup\{x\}]$ is connected. The ``if'' part follows directly from the definition. With this observation in mind, we prove the following.

	\begin{proposition}\label{Eq. condition for a shedding vertex of Sigma_r(G)}
		Let $G$ be a graph and $x \in V(G)$. The following are equivalent:
		\begin{enumerate}
			\item For every $\E \in E(\Con_r(G))$ with $x \notin \E$, we have $\E \cap N_G(x)\neq\emptyset$. In other words, for every $\E \in E(\Con_r(G))$ with $x \notin \E$, there exists a vertex $y \in \E$ such that $(\E \setminus \{y\}) \cup \{x\} \in E(\Con_r(G))$.
			\item $E(\Con_r(G \setminus N_G[x])) = \emptyset$.
		\end{enumerate}
		\begin{proof}
			\emph{(1) $\Rightarrow$ (2):} The proof proceeds by contradiction. Suppose that $E(\operatorname{Con}_r(G \setminus N_G[x])) \neq \emptyset$, and let $\mathcal{E} \in E(\operatorname{Con}_r(G \setminus N_G[x]))$. By construction, we then have $x\notin \mathcal{E}$ and $\mathcal{E} \cap N_G(x)=\emptyset$, which is a contradiction.

			\medskip
			\noindent
			\emph{(2) $\Rightarrow$ (1):} Suppose $E(\Con_r(G \setminus N_G[x])) = \emptyset$. Then for every $\E \in E(\Con_r(G))$, we have $\E \cap N_G[x] \neq \emptyset$. So, if $x \notin \E$, then we must have $\E \cap N_G(x) \neq \emptyset$, as required.
		\end{proof}
	\end{proposition}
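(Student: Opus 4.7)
The plan is to prove the equivalence by unpacking what membership in $E(\Con_r(G\setminus N_G[x]))$ means and then arguing by contrapositive. Recall that $\E\in E(\Con_r(G\setminus N_G[x]))$ exactly when $|\E|=r$, $\E\subseteq V(G)\setminus N_G[x]$, and $(G\setminus N_G[x])[\E]=G[\E]$ is connected. Since $V(G)\setminus N_G[x]=V(G)\setminus(\{x\}\cup N_G(x))$, this membership is equivalent to saying that $\E\in E(\Con_r(G))$, $x\notin \E$, and $\E\cap N_G(x)=\emptyset$. I would state this as a one-line observation at the start.

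For \emph{(1) $\Rightarrow$ (2)}, I would argue by contrapositive. Assume $E(\Con_r(G\setminus N_G[x]))\neq\emptyset$ and pick $\E$ in it. By the observation, $\E\in E(\Con_r(G))$, $x\notin\E$, and $\E\cap N_G(x)=\emptyset$. This directly contradicts (1). For \emph{(2) $\Rightarrow$ (1)}, I would also argue by contrapositive. Suppose (1) fails, so there exists $\E\in E(\Con_r(G))$ with $x\notin\E$ and $\E\cap N_G(x)=\emptyset$. Then $\E\subseteq V(G)\setminus N_G[x]$, and since $G[\E]$ is connected, the same set $\E$ lies in $E(\Con_r(G\setminus N_G[x]))$, contradicting (2).

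For the ``in other words'' clause in (1), I would invoke the paragraph preceding the proposition (concerning the equivalence, for a connected set $W$ with $x\notin W$, between $W\cap N_G(x)\neq\emptyset$ and the existence of $y\in W$ with $G[(W\setminus\{y\})\cup\{x\}]$ connected). Applied to $W=\E$ (which is connected since $\E\in E(\Con_r(G))$), this shows that the two formulations of condition (1) are equivalent, so it suffices to prove the equivalence of (2) with either form.

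There is no real obstacle here: once the correspondence between $E(\Con_r(G\setminus N_G[x]))$ and the set of $r$-connected subsets of $G$ that miss $N_G[x]$ is made explicit, both implications collapse to a one-line contrapositive. The only subtle point worth flagging is that the ``in other words'' reformulation requires $|\E|\geq 1$ and the use of \Cref{connectivity lemma} for $|\E|\geq 2$, but this is already handled by the observation recorded immediately before the proposition.
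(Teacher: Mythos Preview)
Your proposal is correct and follows essentially the same approach as the paper: both directions amount to the observation that $\E\in E(\Con_r(G\setminus N_G[x]))$ precisely when $\E\in E(\Con_r(G))$ with $x\notin\E$ and $\E\cap N_G(x)=\emptyset$. The only cosmetic difference is that the paper proves $(2)\Rightarrow(1)$ directly rather than by contrapositive, and it leaves the ``in other words'' clause to the preceding paragraph without restating it.
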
 
	In the next theorem, we provide an equivalent criterion for verifying the vertex decomposability of $\Sigma_r(\emptyset,G)=\Sigma_r(G)$.
    
\begin{theorem}\label{V.D of Sigma_r(G)}
		Let $G$ be a graph and $r\ge 2$ a positive integer. Then the following are equivalent:
		\begin{enumerate}
			\item Either $|E(\Con_r(G))|\le 1$ or there exists a positive integer $k$ and an ordering of the vertices $x_1,x_2,\ldots,x_k$ of $G$ such that:
			\begin{enumerate}
				\item For each $i\in [k-1]\cup\{0\}$,
				\[ E\left(\Con_r(G_i\setminus N_{G_i}[x_{i+1}])\right) = \emptyset,\]
				where $G_0=G$, and $G_i=G\setminus\{x_1,\ldots,x_i\}$ for each $i\ge 1$.
				\item $k$ is the least positive integer such that $|E(\Con_r(G\setminus\{x_1,\ldots,x_k\}))|\le 1$.
			\end{enumerate}
			\item The simplicial complex $\Sigma_r(G)$ is vertex decomposable.
		\end{enumerate}
	\end{theorem}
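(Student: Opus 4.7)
The plan is to prove both implications by induction on $|V(G)|$ (for fixed $r$), leveraging three ingredients established above: the combinatorial shedding-vertex criterion of \Cref{Eq. condition for a shedding vertex of Sigma_r(G)} (which reduces the shedding condition to the emptiness of $E(\Con_r(G\setminus N_G[x]))$), the link/deletion formulas of \Cref{Eq. conditions for lk and del} at $A=\emptyset$ (giving $\lk_{\Sigma_r(G)}(x)=\Sigma_r(G\setminus x)$ and $\del_{\Sigma_r(G)}(x)=\Sigma_{r-1}(\{x\},G)$ whenever $x$ is a shedding vertex and $r\ge 2$), and \Cref{Vertex decoposable}, which guarantees that the deletion $\Sigma_{r-1}(\{x\},G)$ is automatically vertex decomposable since $G[\{x\}]$ is connected. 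The base case in both directions is $|E(\Con_r(G))|\le 1$: then $\Sigma_r(G)$ is either the void complex or a single simplex, both vertex decomposable.

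\textbf{Direction $(1)\Rightarrow(2)$.} Assuming the vertex order $x_1,\ldots,x_k$ satisfies (1)(a)--(b), I would show by induction on $|V(G)|$ that $\Sigma_r(G)$ is vertex decomposable. By the $i=0$ instance of (1)(a), $E(\Con_r(G\setminus N_G[x_1]))=\emptyset$, so \Cref{Eq. condition for a shedding vertex of Sigma_r(G)} combined with \Cref{Eq. Condition for a shedding vertex} (applied with $A=\emptyset$) shows that $x_1$ is a shedding vertex of $\Sigma_r(G)$. The link equals $\Sigma_r(G_1)$, and the tail sequence $x_2,\ldots,x_k$ for $G_1$ still satisfies (1)(a)--(b) by construction (since the graphs $G_i$ are defined recursively); so the induction hypothesis gives that $\Sigma_r(G_1)$ is vertex decomposable. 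The deletion is $\Sigma_{r-1}(\{x_1\},G)$, which is vertex decomposable by \Cref{Vertex decoposable}. Hence $\Sigma_r(G)$ is vertex decomposable.

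\textbf{Direction $(2)\Rightarrow(1)$.} Again induct on $|V(G)|$. If $|E(\Con_r(G))|\le 1$ we are done, so assume $|E(\Con_r(G))|\ge 2$; then $\Sigma_r(G)$ has at least two distinct facets and is not a simplex, so the definition of vertex decomposability supplies a shedding vertex $x_1\in V(\Sigma_r(G))$ with both link and deletion vertex decomposable. By \Cref{Eq. Condition for a shedding vertex} and \Cref{Eq. condition for a shedding vertex of Sigma_r(G)}, this forces $E(\Con_r(G\setminus N_G[x_1]))=\emptyset$, which is precisely (1)(a) at $i=0$. Since $\lk_{\Sigma_r(G)}(x_1)=\Sigma_r(G_1)$ is vertex decomposable and $|V(G_1)|<|V(G)|$, the induction hypothesis applied to $G_1$ yields either a stop (if $|E(\Con_r(G_1))|\le 1$) or an ordering $x_2,\ldots,x_k$ of vertices of $G_1$ satisfying (1) for $\Sigma_r(G_1)$. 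Prepending $x_1$ and truncating at the first index where $|E(\Con_r(G\setminus\{x_1,\ldots,x_k\}))|\le 1$ yields the required minimal $k$.

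\textbf{Expected obstacles.} I do not foresee any substantive difficulty: both directions reduce cleanly to the two propositions on shedding vertices and links/deletions, together with \Cref{Vertex decoposable} which disposes of the deletion branch once and for all. The only care needed is (a) to confirm that the shedding-vertex characterization of \Cref{Eq. Condition for a shedding vertex} at $A=\emptyset$ indeed matches condition (1) of \Cref{Eq. condition for a shedding vertex of Sigma_r(G)}, so that the equivalence with $E(\Con_r(G\setminus N_G[x]))=\emptyset$ is available; (b) to note that the induction is well-founded because $|V(G_i)|$ strictly decreases at each step; and (c) to observe that minimality of $k$ in (1)(b) is a cosmetic condition, obtained by truncation.
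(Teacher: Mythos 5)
Your proposal is correct and follows essentially the same route as the paper's proof: both directions hinge on identifying $x_1$ as a shedding vertex via \Cref{Eq. condition for a shedding vertex of Sigma_r(G)}, disposing of the deletion branch $\Sigma_{r-1}(\{x_1\},G)$ once and for all via \Cref{Vertex decoposable}, and iterating on the link $\Sigma_r(G_1)$; the paper phrases the recursion as "continue for $k$ steps" rather than a formal induction on $|V(G)|$, but this is the same argument. Your remarks on the cosmetic nature of the minimality condition in (1)(b) and on matching the two shedding-vertex criteria are consistent with how the paper handles these points.
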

	\begin{proof}
		We have $\Sigma_r(G)=\l F^c\colon F\subseteq V(G), |F|=r, G[F]\text{ is connected}\r$. Also, by \Cref{Eq. Condition for a shedding vertex} and \Cref{Eq. condition for a shedding vertex of Sigma_r(G)}, $x\in V(G)$ is a shedding vertex of $\Sigma_r(G)$ if and only if $E(\Con_r(G \setminus N_G[x])) = \emptyset$. Building on these observations, we proceed to the next stage of the proof.
		
		\medskip
		\noindent
		\emph{(1)} $\Rightarrow$ \emph{(2)}: If $|E(\Con_r(G))|\le 1$, then $\Sigma_r(G)$ is either the void complex, the empty complex, or a simplex, and hence vertex decomposable. Thus, we may assume there exists a positive integer $k$ and an ordering of vertices $x_1,x_2,\ldots,x_k$ of $G$ which satisfies conditions (a) and (b). We have $E(\Con_r(G \setminus N_G[x_1])) = \emptyset$, and thus $x_1$ is a shedding vertex of $\Sigma_r(G)$. To show vertex decomposability, we must verify that both the deletion and the link of $x_1$ in $\Sigma_r(G)$ are vertex decomposable. By \Cref{Eq. conditions for lk and del} we have 
		\[
		\del_{\Sigma_r(G)}(x_1) = \Sigma_{r-1}(\{x_1\},G)\text{  and  }\lk_{\Sigma_r(G)}(x_1)=\Sigma_r(G \setminus x_1).
		\] 
		Now, by \Cref{Vertex decoposable}, $\del_{\Sigma_r(G)}(x_1)$ is vertex decomposable. Thus it remains to show that $\lk_{\Sigma_r(G)}(x_1)=\Sigma_r(G \setminus x_1)=\Sigma_r(G_1)$ is vertex decomposable. Since $E(\Con_r(G_1\setminus N_{G_1}[x_2]))=\emptyset$, we see that $x_2$ is a shedding vertex of $\Sigma_r(G \setminus x_1)$, and we continue the above process for $G_1$. After $k$-many steps, we finally obtain a vertex decomposable simplicial complex \[
        \lk_{\Sigma_r(G \setminus \{x_1, \ldots, x_{k-1}\})}(x_k) = \Sigma_r(G \setminus \{x_1, \ldots, x_k\}),
        \]
        which is either a simplex or the void complex, since $|E(\Con_r(G\setminus\{x_1,\ldots,x_k\}))| \le 1$. Consequently, $\Sigma_r(G)$ is vertex decomposable. 
		
		\medskip\noindent	
		\emph{(2)} $\Rightarrow$ \emph{(1)}: Assume that $\Sigma_r(G)$ is vertex decomposable. If $\Sigma_r(G)$ is a simplex or the void complex or the empty complex, then $|E(\Con_r(G))|\le 1$. Otherwise, there exists a shedding vertex, say $x_1$, such that both $\lk_{\Sigma_r(G)}(x_1)$ and $\del_{\Sigma_r(G)}(x_1)$ are vertex decomposable. Since $x_1$ is a shedding vertex we also have $E(\Con_r(G \setminus N_G[x_1])) = \emptyset$. If $|E(Con_r(G\setminus x_1))| \le 1$ then we are done. If not, then we have \[
        \lk_{\Sigma_r(G)}(x_1)=\Sigma_r(G \setminus x_1)=\Sigma_r(G_1),\]
        which is vertex decomposable. Thus it contains a shedding vertex, say $x_2$, for which $E(\Con_r(G_1 \setminus N_{G_1}[x_2])) = \emptyset$. Now, suppose $x_1, x_2, \ldots, x_{i-1}$ have already been chosen such that for each $j\in [i-1]$, |$E(\Con_r(G\setminus\{x_1,\ldots,x_{j}\}))|\ge 2$ and  $x_j$ is a shedding vertex of 
        \[
        \lk_{\Sigma_r(G_{j-2})}(x_{j-1})=\Sigma_r(G_{j-2} \setminus x_{j-1})=\Sigma_r(G_{j-1}).
        \]
        Thus, $E(\Con_r(G_{i-1} \setminus N_{G_{i-1}}[x_i])) = \emptyset$, and hence, we have part (a) of (1). Moreover, since $|V(G)|<\infty$ after $k$-many steps we have $\lk_{\Sigma_r(G \setminus \{x_1, \ldots, x_{k-1}\})}(x_k) = \Sigma_r(G \setminus \{x_1, \ldots, x_k\})$ is either a simplex or the empty complex or the void complex. Thus, $|E(\Con_r(G\setminus\{x_1,\ldots,x_k\})) | \le 1$, which is part (b) of (1). 
	\end{proof}

In \cite[Theorem 3.10]{CCVD}, the author has proved that if the Alexander dual of the independence complex of $\H^{c}$ is vertex decomposable, then $\H$ is chordal. In case of $\Con_r(G)$, we can provide an independent proof of this fact using \Cref{chordal 4} and \Cref{V.D of Sigma_r(G)}. Recall that $(\Con_r(G))^{c} = \D_r(G)$ for any graph $G$, and thus, we have the following theorem. 

\begin{theorem}\label{V.D implies chordality}
    If the simplicial complex $\Sigma_r(G)$ is vertex decomposable, then the clutter $\D_r(G)$ is chordal. 
\end{theorem}
\begin{proof}
   From \Cref{V.D of Sigma_r(G)}, we obtain an equivalent condition for the vertex decomposability of $\Sigma_r(G)$, and from \Cref{chordal 4} we obtain a sufficient condition for $\D_r(G)$ to be chordal. It is therefore enough to show that condition~(1) in \Cref{V.D of Sigma_r(G)} implies condition~(1) in \Cref{chordal 4}. 

If $k$ is the smallest positive integer such that
\[
    |E(\Con_r(G \setminus \{x_1,\ldots,x_k\}))| \le 1,
\]
then the clutter $\D_r(G \setminus \{x_1,\ldots,x_k\})$ is either the $r$-uniform complete clutter on $V(G) \setminus \{x_1,\ldots,x_k\}$ or the clutter obtained by deleting a single circuit from the $r$-uniform complete clutter on $V(G) \setminus \{x_1,\ldots,x_k\}$. In both cases, $\D_r(G)$ is chordal.

\end{proof}

\medskip
    
Our next goal is to establish necessary and sufficient conditions for determining when the complexes $\Sigma_r(G_1 \sqcup G_2)$ and $\Sigma_r(G_1 * G_2)$ are vertex decomposable, under the assumption that $V(G_1) \cap V(G_2) = \emptyset$. We first recall the constructions of $G_1 \sqcup G_2$ and $G_1 * G_2$. Let $G_1$ and $G_2$ be two graphs on disjoint vertex sets $V(G_1)$ and $V(G_2)$. The {\it disjoint union} $G_1\sqcup G_2$ of $G_1$ and $G_2$ is a graph with vertex set and circuit set described as follows:
    \begin{align*}
       V(G_1\sqcup G_2)&=V(G_1)\cup V(G_2)\\ 
       E(G_1\sqcup G_2)&=E(G_1)\cup E(G_2).
    \end{align*}
    Similarly, the {\it join} of $G_1$ and $G_2$, denoted by $G_1*G_2$, is a graph with the vertex set and circuit set described as follows:
    \begin{align*}
        V(G_1*G_2)&=V(G_1)\cup V(G_2)\\
        E(G_1*G_2)&=E(G_1)\cup E(G_2)\cup\{\{x,y\}\mid x\in V(G_1)\text{ and }y\in V(G_2)\}.
    \end{align*}
    
    The following two lemmas play a crucial role in formulating the vertex decomposable conditions of $\Sigma_r(G_1 \sqcup G_2)$ and $\Sigma_r(G_1 * G_2)$.

	\begin{lemma}\label{Sigma of the disjoint union of graphs}
		Let $G_1$ and $G_2$ be two disjoint graphs. Then
		\[
		\Sigma_r(G_1 \sqcup G_2)
		= (\Sigma_r(G_1) * \Delta_{V(G_2)})
		\;\sqcup\;
		(\Delta_{V(G_1)} * \Sigma_r(G_2)).
		\]
	\end{lemma}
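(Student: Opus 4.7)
The approach is a direct unpacking of the facet descriptions on both sides, together with the elementary observation that in a disjoint union of graphs, every connected induced subgraph lies entirely in one of the two summands.

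The plan is the following. First, I would identify the facets of $\Sigma_r(G_1\sqcup G_2)$ using its defining generating set: these are exactly the sets $F^c$ (complement taken in $V(G_1)\cup V(G_2)$) where $F$ is an $r$-subset of $V(G_1)\cup V(G_2)$ such that $(G_1\sqcup G_2)[F]$ is connected. Since $G_1\sqcup G_2$ has no edges between $V(G_1)$ and $V(G_2)$, any connected induced subgraph is contained in exactly one of $V(G_1)$ or $V(G_2)$. Hence the facets of $\Sigma_r(G_1\sqcup G_2)$ split into two classes: those of the form $(V(G_1)\setminus F_1)\cup V(G_2)$ with $F_1\subseteq V(G_1)$, $|F_1|=r$, $G_1[F_1]$ connected; and symmetrically $V(G_1)\cup(V(G_2)\setminus F_2)$ with analogous conditions on $F_2\subseteq V(G_2)$.

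Next, I would unpack the right-hand side. The facets of the join $\Sigma_r(G_1)*\Delta_{V(G_2)}$ are of the form $H_1\sqcup V(G_2)$ where $H_1$ runs over the facets of $\Sigma_r(G_1)$; by definition, these $H_1$ are precisely the sets $V(G_1)\setminus F_1$ with $F_1\subseteq V(G_1)$, $|F_1|=r$, and $G_1[F_1]$ connected. An identical analysis gives the facets of $\Delta_{V(G_1)}*\Sigma_r(G_2)$. Both joined complexes live on the common vertex set $V(G_1)\cup V(G_2)$, so the symbol $\sqcup$ on the right-hand side is interpreted as the union of simplicial complexes on this shared vertex set (equivalently, the complex generated by the union of the two facet lists). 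Comparing these facets with the two classes enumerated in the previous step shows that the facet list of $\Sigma_r(G_1\sqcup G_2)$ matches exactly the combined facet list of $(\Sigma_r(G_1)*\Delta_{V(G_2)})\sqcup(\Delta_{V(G_1)}*\Sigma_r(G_2))$, which gives equality as simplicial complexes.

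The only point that needs mild care, and arguably the main (though not hard) obstacle, is the bookkeeping of complements: since $\Sigma_r(G_1)$ is built by complementation inside $V(G_1)$, one must verify that taking the complement of $F_1\subseteq V(G_1)$ inside $V(G_1)\cup V(G_2)$ produces precisely $(V(G_1)\setminus F_1)\sqcup V(G_2)$, which is the join of the facet $V(G_1)\setminus F_1$ of $\Sigma_r(G_1)$ with the full simplex $\Delta_{V(G_2)}$. Once this identification is made, and the symmetric statement for $G_2$ is recorded, the proof is complete by a direct comparison of facet sets. No induction or auxiliary tool (beyond the definitions of join, disjoint union, and $\Sigma_r$) is required.
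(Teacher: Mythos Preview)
Your proposal is correct and follows essentially the same approach as the paper: the paper's proof is the single observation that $\mathcal{E}\in E(\Con_r(G_1\sqcup G_2))$ if and only if $\mathcal{E}\in E(\Con_r(G_1))$ or $\mathcal{E}\in E(\Con_r(G_2))$, which is precisely the ``connected induced subgraphs lie in one summand'' step you identify, and the rest is the complement bookkeeping you spell out. Your write-up simply expands this one line into a fuller facet-by-facet comparison.
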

	
	\begin{proof}
    This follows directly from the fact that $\mathcal{E} \in E(\operatorname{Con}_r(G_1 \sqcup G_2))$ if and only if either $\mathcal{E} \in E(\operatorname{Con}_r(G_1))$ or $\mathcal{E} \in E(\operatorname{Con}_r(G_2))$.\end{proof}
	
	\begin{lemma}\label{Sigma of the join of graphs}
		Let $G_1$ and $G_2$ be two disjoint graphs. Then
		\begin{align*}
		    \Sigma_r(G_1 * G_2)
		= & \Sigma_r(G_1) * \langle V(G_2) \rangle
		\;\sqcup\;
		   \langle V(G_1) \rangle * \Sigma_r(G_2)\\
		& \sqcup\; 
		\big\langle\, \E^c \;:\; |\E|=r,\; \E \cap V(G_1)\neq\emptyset,\; \E \cap V(G_2)\neq\emptyset \,\big\rangle.
		\end{align*}
	\end{lemma}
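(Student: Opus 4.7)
The approach is to compare facets on both sides, using that the facets of $\Sigma_r(G)$ are exactly $\{\E^c : \E \in E(\Con_r(G))\}$. I will partition $E(\Con_r(G_1*G_2))$ into three classes depending on how a connected $r$-set $\E$ meets the two vertex sets $V(G_1)$ and $V(G_2)$, and match each class to one of the three summands on the right-hand side. Since $V(G_1) \cap V(G_2) = \emptyset$, every $\E$ falls into exactly one of the cases: $\E \subseteq V(G_1)$, $\E \subseteq V(G_2)$, or $\E$ meets both.

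First, suppose $\E \subseteq V(G_1)$ with $|\E|=r$. Then no join edges of $G_1 * G_2$ touch $\E$, so $(G_1*G_2)[\E] = G_1[\E]$, and the connectedness requirement reduces to $\E \in E(\Con_r(G_1))$. The complement of $\E$ in $V(G_1) \sqcup V(G_2)$ is $(V(G_1) \setminus \E) \sqcup V(G_2)$, which is precisely a facet of $\Sigma_r(G_1) * \langle V(G_2) \rangle$ by the definition of the join of simplicial complexes. The case $\E \subseteq V(G_2)$ is symmetric and produces all facets of $\langle V(G_1) \rangle * \Sigma_r(G_2)$.

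The key observation is for the mixed case, when $\E$ meets both $V(G_1)$ and $V(G_2)$. Pick any $x \in \E \cap V(G_1)$ and $y \in \E \cap V(G_2)$. By the definition of the join, every vertex of $\E \cap V(G_1)$ is adjacent to $y$, and every vertex of $\E \cap V(G_2)$ is adjacent to $x$, so $(G_1*G_2)[\E]$ contains a complete bipartite subgraph spanning $\E$ and is therefore automatically connected, regardless of the internal structure of $G_1$ or $G_2$ on $\E$. Hence the connectedness condition imposes no further restriction in this case, and every $r$-subset $\E$ meeting both sides lies in $E(\Con_r(G_1 * G_2))$. Their complements $\E^c$ are exactly the facets listed in the third summand.

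Finally, the three classes of $\E$ are mutually exclusive, so the three summands have disjoint facet sets, and together they enumerate every facet of $\Sigma_r(G_1*G_2)$ exactly once; this gives the claimed decomposition. The only substantive step is the automatic connectedness observation in the mixed case; the rest is routine bookkeeping with the join operation, and I do not anticipate any real obstacle beyond stating these case distinctions cleanly.
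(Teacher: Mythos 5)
Your proposal is correct and follows essentially the same route as the paper: both arguments classify the circuits $\E \in E(\Con_r(G_1*G_2))$ by whether $\E$ lies entirely in $V(G_1)$, entirely in $V(G_2)$, or meets both, and match the complements to the three summands. The only difference is that you spell out the automatic-connectedness observation in the mixed case (which justifies the absence of any connectedness condition in the third summand), a point the paper dismisses as ``easy to see.''
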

	
	\begin{proof}
		Let $W$ be a facet of $\Sigma_r(G_1 * G_2)$. Then $W=\E^c$ for some $\E \in E(\Con_r(G_1*G_2))$. Thus, in order to determine the facets of $\Sigma_r(G_1 * G_2)$, we need to determine the structure of the circuits in the clutter $\Con_r(G_1*G_2)$. 
        Let $\E \in E(\Con_r(G_1*G_2))$. Then there are three possibilities:
		
		\begin{itemize}
			\item $\E \subseteq V(G_1)$. Hence, $		\E^c=(V(G_1)\setminus \E)\;\sqcup\; V(G_2)\in \Sigma_r(G_1)*\langle V(G_2)\rangle.$
			
			\item $\E \subseteq V(G_2)$. In this case, $\E^c=V(G_1)\;\sqcup\; (V(G_2)\setminus \E)\in \langle V(G_1)\rangle* \Sigma_r(G_2)$.
			
			\item $\E\cap V(G_1)\neq\emptyset$ and $\E\cap V(G_2)\neq\emptyset$. In this case, it is easy to see that \[\E^c\in \big\langle\, \E^c \;:\; |\E|=r,\; \E \cap V(G_1)\neq\emptyset,\; \E \cap V(G_2)\neq\emptyset \,\big\rangle. 
			\]
		\end{itemize}
        This completes the proof.
\end{proof}

Based on the above two lemmas, we now proceed to prove the following two theorems.

	\begin{theorem}\label{disjoint union vd}
		Let $r \geq 2$ and let $G_1$ and $G_2$ be graphs.  
		Then $\Sigma_r(G_1 \sqcup G_2)$ is vertex decomposable if and only if one of the following two conditions holds:
		\begin{itemize}
			\item $\Sigma_r(G_1)$ is vertex decomposable and $|V(G_2)| < r$.
			
			\item $\Sigma_r(G_2)$ is vertex decomposable and $|V(G_1)| < r$.
		\end{itemize}  
	\end{theorem}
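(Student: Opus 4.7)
My plan is to combine the decomposition of Lemma~\ref{Sigma of the disjoint union of graphs} with the shedding-vertex criterion of Proposition~\ref{Eq. condition for a shedding vertex of Sigma_r(G)}. The guiding observation is that if $|V(G_2)|<r$, then $G_2$ contributes no $r$-connected subset of $G_1\sqcup G_2$, so $\Con_r(G_1\sqcup G_2)=\Con_r(G_1)$ and the decomposition of Lemma~\ref{Sigma of the disjoint union of graphs} collapses to the single join
\[
\Sigma_r(G_1\sqcup G_2)=\Sigma_r(G_1)*\Delta_{V(G_2)}.
\]

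For the ``if'' direction I would, WLOG, assume $\Sigma_r(G_1)$ is vertex decomposable and $|V(G_2)|<r$. The factorization above applies, and since the join of a vertex decomposable complex with a simplex is vertex decomposable (a standard fact, cf.~\cite{ProBill1980}), $\Sigma_r(G_1\sqcup G_2)$ is vertex decomposable.

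For the ``only if'' direction, assume $\Sigma_r(G_1\sqcup G_2)$ is vertex decomposable. After disposing of the simplex/void/empty cases directly, pick a shedding vertex $v$; Proposition~\ref{Eq. condition for a shedding vertex of Sigma_r(G)} yields $E(\Con_r((G_1\sqcup G_2)\setminus N_{G_1\sqcup G_2}[v]))=\emptyset$. Without loss of generality $v\in V(G_1)$, so $N_{G_1\sqcup G_2}[v]\subseteq V(G_1)$ and the component $G_2$ persists intact in the deletion, forcing $E(\Con_r(G_2))=\emptyset$ and hence $|V(G_2)|<r$. The factorization then applies, and since vertex decomposability of $\Sigma_r(G_1)*\Delta_{V(G_2)}$ reflects back to $\Sigma_r(G_1)$ by the join fact, we conclude.

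The step I expect to be most delicate is the passage from $E(\Con_r(G_2))=\emptyset$ to $|V(G_2)|<r$, since a priori the former is strictly weaker (e.g., a graph consisting of isolated vertices can have many vertices and no $r$-connected subset). I would address this by a finer analysis of how a shedding vertex must interact with the facets contributed by the $\Delta_{V(G_1)}*\Sigma_r(G_2)$ piece of Lemma~\ref{Sigma of the disjoint union of graphs}, leveraging the fact that the union structure of the decomposition imposes genuine compatibility conditions that force the size bound $|V(G_2)|<r$ whenever $\Sigma_r(G_1\sqcup G_2)$ admits a shedding vertex at all.
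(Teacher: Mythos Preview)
Your strategy matches the paper's: Lemma~\ref{Sigma of the disjoint union of graphs} plus Proposition~\ref{Eq. condition for a shedding vertex of Sigma_r(G)} plus the Provan--Billera join fact \cite[Proposition~2.4]{ProBill1980}. The ``if'' direction is identical to the paper's.

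For ``only if'' you have correctly flagged a real gap: $E(\Con_r(G_2))=\emptyset$ does \emph{not} force $|V(G_2)|<r$ (take $G_2$ to be $r$ isolated vertices). Your proposed repair cannot work: once $E(\Con_r(G_2))=\emptyset$, the complex $\Sigma_r(G_2)$ is void, so the $\Delta_{V(G_1)}*\Sigma_r(G_2)$ piece of Lemma~\ref{Sigma of the disjoint union of graphs} contributes no facets whatsoever---there is nothing left to analyse. The ``simplex/void/empty'' disposal you mention is equally problematic for the same reason: if $\Sigma_r(G_1\sqcup G_2)$ is void you cannot conclude either size bound.

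The paper handles this direction contrapositively. Rather than extracting a shedding vertex and trying to deduce the size bound, it assumes $|V(G_1)|\geq r$ and $|V(G_2)|\geq r$ from the outset and argues that then no vertex satisfies the shedding criterion of Proposition~\ref{Eq. condition for a shedding vertex of Sigma_r(G)}: for $x\in V(G_1)$, all of $G_2$ survives in $(G_1\sqcup G_2)\setminus N[x]$ and supplies a connected $r$-set, and symmetrically for $x\in V(G_2)$. Hence $\Sigma_r(G_1\sqcup G_2)$ is not vertex decomposable, and the contrapositive gives the desired size bound, after which the factorisation and join argument finish exactly as in your ``if'' direction. This is cleaner than your direct route because it \emph{starts} from the size hypothesis rather than trying to derive it. (Strictly speaking, the paper's step ``$|V(G_2)|\geq r\Rightarrow G_2$ contains a connected $r$-set'' tacitly assumes $G_2$ has a component of size at least $r$, so the very edge case you worry about is glossed over there as well.)
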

	
	\begin{proof}
		Observe that if $|V(G_1)| \geq r$ and $|V(G_2)| \geq r$, then by \Cref{Eq. condition for a shedding vertex of Sigma_r(G)},  
		$\Sigma_r(G_1 \sqcup G_2)$ has no shedding vertex and is therefore not vertex decomposable.  
		Thus, if $\Sigma_r(G_1 \sqcup G_2)$ is vertex decomposable, at least one of the two graphs must have fewer than $r$ vertices.  
		Without loss of generality, assume $|V(G_2)| < r$. Then $\Sigma_r(G_2)$ is the empty complex, and hence by \Cref{Sigma of the disjoint union of graphs}, we have
		\[
		\Sigma_r(G_1 \sqcup G_2) 
		= \Sigma_r(G_1) * \Delta_{V(G_2)}.
		\]
		Now, by \cite[Proposition 2.4]{ProBill1980}, $\Sigma_r(G_1) * \Delta_{V(G_2)}$ is vertex decomposable if and only if $\Sigma_r(G_1)$ is vertex decomposable. Thus, we have $\Sigma_r(G_1)$ is vertex decomposable, as required.
		
		Conversely, suppose $\Sigma_r(G_1)$ is vertex decomposable and $|V(G_2)| < r$, then  
		$\Sigma_r(G_1 \sqcup G_2) = \Sigma_r(G_1) * \langle V(G_2) \rangle$ is vertex decomposable, again by \cite[Proposition 2.4]{ProBill1980}).
	\end{proof}

	\begin{theorem}\label{V.D. of the join of graphs}
		Let $r\ge 2$, and let $G_1,G_2$ be two graphs. Then $\Sigma_r(G_1 * G_2)$ is vertex decomposable if and only if both $\Sigma_r(G_1)$ and $\Sigma_r(G_2)$ are vertex decomposable.
	\end{theorem}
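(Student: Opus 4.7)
The plan is an induction on $n = |V(G_1)| + |V(G_2)|$, enabled by the following structural observation. For any $x \in V(G_1)$, we have
\[
N_{G_1 * G_2}[x] = N_{G_1}[x] \cup V(G_2),
\]
so $(G_1 * G_2) \setminus N_{G_1 * G_2}[x] = G_1 \setminus N_{G_1}[x]$. Combining \Cref{Eq. Condition for a shedding vertex} (with $A = \emptyset$) and \Cref{Eq. condition for a shedding vertex of Sigma_r(G)}, this identity shows that $x \in V(G_1)$ is a shedding vertex of $\Sigma_r(G_1 * G_2)$ if and only if it is a shedding vertex of $\Sigma_r(G_1)$, and analogously for $V(G_2)$. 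Moreover, by \Cref{Eq. conditions for lk and del}, $\lk_{\Sigma_r(G_1 * G_2)}(x) = \Sigma_r((G_1 \setminus x) * G_2)$ and $\del_{\Sigma_r(G_1 * G_2)}(x) = \Sigma_{r-1}(\{x\}, G_1 * G_2)$; the latter is always vertex decomposable by \Cref{Vertex decoposable}.

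For the forward direction, assume $\Sigma_r(G_1 * G_2)$ is vertex decomposable. If $|E(\Con_r(G_1 * G_2))| \leq 1$ then the complex is a simplex, empty, or void complex, and a brief case analysis (in which $|V(G_1)|$ or $|V(G_2)|$ is forced to be small, using \Cref{Sigma of the join of graphs}) yields that both $\Sigma_r(G_i)$ are vertex decomposable. Otherwise, there is a shedding vertex $x$ of $\Sigma_r(G_1 * G_2)$ for which both the link and the deletion are vertex decomposable. Without loss of generality $x \in V(G_1)$. The key observation makes $x$ a shedding vertex of $\Sigma_r(G_1)$; since $\lk_{\Sigma_r(G_1 * G_2)}(x) = \Sigma_r((G_1 \setminus x) * G_2)$ is vertex decomposable and $|V(G_1 \setminus x)| + |V(G_2)| < n$, the induction hypothesis forces both $\Sigma_r(G_1 \setminus x) = \lk_{\Sigma_r(G_1)}(x)$ and $\Sigma_r(G_2)$ to be vertex decomposable. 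Coupled with $\del_{\Sigma_r(G_1)}(x) = \Sigma_{r-1}(\{x\}, G_1)$ being vertex decomposable by \Cref{Vertex decoposable}, this yields $\Sigma_r(G_1)$ vertex decomposable as well.

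For the backward direction, assume both $\Sigma_r(G_1)$ and $\Sigma_r(G_2)$ are vertex decomposable; after disposing of the trivial case $|E(\Con_r(G_1 * G_2))| \leq 1$, we must exhibit a shedding vertex of $\Sigma_r(G_1 * G_2)$ whose link and deletion are vertex decomposable. If $\Sigma_r(G_1)$ is not a simplex, empty, or void complex, its vertex decomposition supplies a shedding vertex $y \in V(\Sigma_r(G_1))$ for which $\Sigma_r(G_1 \setminus y) = \lk_{\Sigma_r(G_1)}(y)$ is vertex decomposable; the key observation promotes $y$ to a shedding vertex of $\Sigma_r(G_1 * G_2)$, whose link $\Sigma_r((G_1 \setminus y) * G_2)$ is vertex decomposable by induction and whose deletion is vertex decomposable by \Cref{Vertex decoposable}. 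A symmetric step handles the case where $\Sigma_r(G_2)$ is not a simplex, empty, or void. The main obstacle is the residual edge case in which both $\Sigma_r(G_1)$ and $\Sigma_r(G_2)$ are a simplex, empty, or void, so $|E(\Con_r(G_i))| \leq 1$ for each $i$: here the shedding vertex must be picked by hand. Using $|E(\Con_r(G_1 * G_2))| \geq 2$, one verifies (without loss of generality) that $|V(G_1)| \geq 2$, and then chooses $x \in V(G_1)$ to lie in the unique circuit of $\Con_r(G_1)$ when one exists, and arbitrarily otherwise; the verifications that $E(\Con_r(G_1 \setminus N_{G_1}[x])) = \emptyset$, that $x$ lies in $V(\Sigma_r(G_1 * G_2))$ (witnessed by a connected $r$-subset of $V(G_1 * G_2)$ missing $x$), and that $\Sigma_r(G_1 \setminus x)$ is the void complex are all elementary, after which the induction concludes as before.
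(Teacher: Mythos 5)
Your backward direction is essentially the paper's argument: induct on $|V(G_1)|+|V(G_2)|$, lift a shedding vertex $x\in V(G_1)$ of $\Sigma_r(G_1)$ to one of $\Sigma_r(G_1*G_2)$ using $N_{G_1*G_2}[x]=N_{G_1}[x]\cup V(G_2)$, identify the link with $\Sigma_r((G_1\setminus x)*G_2)$ and the deletion with $\Sigma_{r-1}(\{x\},G_1*G_2)$, and close with \Cref{Vertex decoposable}. You are in fact more careful than the paper in the degenerate case: when $\Sigma_r(G_1)$ is a simplex the paper takes an \emph{arbitrary} vertex of $G_1$, which need not satisfy $E(\Con_r(G_1\setminus N_{G_1}[x]))=\emptyset$ if the unique circuit of $\Con_r(G_1)$ avoids $N_{G_1}[x]$; your choice of $x$ inside that unique circuit repairs this.

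The forward direction is where you diverge, and where there is a genuine gap. The paper disposes of it in two lines: by \Cref{Sigma of the join of graphs}, $V(G_2)$ is a face of $\Sigma_r(G_1*G_2)$ with $\lk_{\Sigma_r(G_1*G_2)}(V(G_2))=\Sigma_r(G_1)$, and the link of any face of a vertex decomposable complex is vertex decomposable. Your inductive route breaks at the final step ``this yields $\Sigma_r(G_1)$ vertex decomposable'': a shedding vertex $x\in V(G_1)$ of $\Sigma_r(G_1*G_2)$ need not be a vertex of $\Sigma_r(G_1)$ at all. For instance, take $r=2$ and $G_1$ the path $a\,\text{--}\,x\,\text{--}\,b$: every circuit of $\Con_2(G_1)$ contains $x$, so $x\notin V(\Sigma_2(G_1))=\{a,b\}$, yet $E(\Con_2(G_1\setminus N_{G_1}[x]))=\emptyset$ and $x$ can perfectly well be the shedding vertex chosen for the join. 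In that situation $\lk_{\Sigma_r(G_1)}(x)$ is the void complex, $\del_{\Sigma_r(G_1)}(x)=\Sigma_r(G_1)$, the ``decomposition at $x$'' is vacuous, and \Cref{Eq. conditions for lk and del}(2a) --- which you invoke for $\del_{\Sigma_r(G_1)}(x)=\Sigma_{r-1}(\{x\},G_1)$ --- is only stated for $x\in V(\Sigma_r(A,G))$. The conclusion survives (when every circuit of $\Con_r(G_1)$ contains $x$ one checks directly that $\Sigma_r(G_1)=\Sigma_{r-1}(\{x\},G_1)$, which is vertex decomposable by \Cref{Vertex decoposable}), but this case must be argued separately; the cleaner fix is to replace your whole forward induction by the paper's face-link argument.
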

	
	\begin{proof}
		Assume that $\Sigma_r(G_1*G_2)$ is vertex decomposable. If $\Sigma_r(G_1) = \emptyset$, then it is vertex decomposable. If not then, by \Cref{Sigma of the join of graphs}, we have $\lk_{\Sigma_r(G_1*G_2)}(V(G_2))=\Sigma_r(G_1)$. Since the link of any face of a vertex decomposable simplicial complex is vertex decomposable (see \cite[Proposition 2.3]{ProBill1980}), we find that $\Sigma_r(G_1)$ is vertex decomposable. Similarly, considering the link of the face $V(G_1)$, we find that $\Sigma_r(G_2)$ is vertex decomposable.
		
		Conversely, assume that both $\Sigma_r(G_1)$ and $\Sigma_r(G_2)$ are vertex decomposable. Let $G=G_1*G_2$ with $|V(G_1)|=n_1$ and $|V(G_2)| = n_2$ . We prove that $\Sigma_r(G)$ is vertex decomposable using induction on $|V(G)|=n_1+n_2$.
        
			\medskip
			\noindent
			\textbf{Base case:} $n_1+n_2 \le r$. Then $\Sigma_r(G)$ is either the void complex or the empty complex, which is clearly vertex decomposable.
			
            \medskip
			\noindent
			\textbf{Induction hypothesis:} Let $n_1$ and $n_2$ be positive integers such that $n_1 + n_2 \ge r+1$. We assume that if $n_1'$ and $n_2'$ are two positive integers such that $n_1' + n_2' < n_1 + n_2$ and there exists graphs $G_1'$ and $G_2'$ with $|V(G_1')| = n_1'$ and $|V(G_2')| = n_2'$, then whenever $\Sigma_r(G_1')$ and $\Sigma_r(G_2')$ are vertex decomposable, so is $\Sigma_r(G_1' * G_2')$.
			
            \medskip
			\noindent
			\textbf{Inductive Step:} We choose a shedding vertex $x$ of $\Sigma_r(G)$ as follows: if $\Sigma_r(G_1)$ is a simplex or the empty complex or the void complex, then take $x$ to be any vertex of $G_1$. Otherwise, take $x\in V(G_1)$ such that $x$ is a shedding vertex of $\Sigma_r(G_1)$. Then, by \Cref{Eq. Condition for a shedding vertex}, $x$ is a shedding vertex of $\Sigma_r(G)$. Indeed,
            \[E(\Con_r(G\setminus N_G[x])) = E(\Con_r(G_1 \setminus N_{G_1}[x]) = \emptyset.\]
            Next, by \Cref{Eq. conditions for lk and del}, we have
		\[
		\lk_{\Sigma_r(G)}(x) = \Sigma_r(G \setminus x) \text{  and  }\del_{\Sigma_r(G)}(x) = \Sigma_{r-1}(\{x\},G).
		\]
Observe that $ G\setminus x = (G_1 \setminus x)*G_2$ and $\lk_{\Sigma_r(G_1)}(x) = \Sigma_r(G_1 \setminus x)$ is vertex decomposable. Since, $|V((G_1\setminus x)*G_2)| = n_1-1+n_2 < n_1 + n_2$, by the induction hypothesis $\Sigma_r(G\setminus x)$ is vertex decomposable. Thus, by \Cref{Vertex decoposable}, $\Sigma_r(G)$, is vertex decomposable.
            \end{proof}

	\begin{remark}
		Note that in case $r=1$, by \Cref{remark 1}, we always have that both the simplicial complexes $\Sigma_1(G_1\sqcup G_2)$ and $\Sigma_1(G_1*G_2)$ are vertex decomposable for any choice of $G_1$ and $G_2$.
	\end{remark}

	
\section{Generalizing Fr\"oberg's theorem to Co-Connected Complexes}\label{Section 4 Froberg}
	
In this section, we apply the results established in \Cref{sec-3} to investigate the vertex decomposability, shellability, and Cohen-Macaulay property of the simplicial complex $\SrAG$ for several important classes of graphs.
As an application, we obtain a clutter version of the well-known Fr\"oberg's theorem for $\Con_r(G)$; we start with the chordal graphs. 
	
	\begin{theorem}\label{V.D of Sigma_r(G) if G is chordal}
		Let $G$ be a chordal graph, and $r \geq 2$. The following are equivalent:
		\begin{enumerate}        
			\item $\Sigma_r(G)$ is vertex decomposable.
			\item $\Sigma_r(G)$ is shellable.
			\item $\Sigma_r(G)$ is Cohen-Macaulay.
			\item $G$ is $r$-gap-free.
			\item The clutter $\Con_r(G)$ is co-chordal.
		\end{enumerate}
	\end{theorem}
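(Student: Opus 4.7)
The plan is to establish the cycle of implications $(1)\Rightarrow (2)\Rightarrow (3)\Rightarrow (4)\Rightarrow (5)\Rightarrow (1)$. The first two, $(1)\Rightarrow (2)\Rightarrow (3)$, are immediate from the standard chain \eqref{VdShCm}, noting that $\Sigma_r(G)$ is pure since every facet is the complement of an $r$-element subset of $V(G)$.

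For $(3)\Rightarrow (4)$, which needs no chordality, I would argue by contradiction. Assume $\gamma_r(G)\ge 2$, witnessed by disjoint $r$-subsets $W_1,W_2\subseteq V(G)$ forming an induced matching in $\Con_r(G)$. By Eagon--Reiner, Cohen-Macaulayness of $\Sigma_r(G)=(\Indr)^{\vee}$ is equivalent to $I(\Con_r(G))=I_{\Indr}$ admitting a linear resolution, so $\reg(I(\Con_r(G)))=r$. Restricting this monomial ideal to the $2r$ variables indexed by $W_1\sqcup W_2$ yields the ideal $(\x_{W_1},\x_{W_2})$, a complete intersection of two degree-$r$ monomials in disjoint variables; its Koszul resolution computes regularity to be exactly $2r-1$. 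Since squarefree restriction to a variable subset cannot increase regularity, we obtain $\reg(I(\Con_r(G)))\ge 2r-1>r$, a contradiction.

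For $(4)\Rightarrow (5)$, I would induct on $|V(G)|$ using the perfect elimination order of the chordal graph $G$. Pick a simplicial vertex $v$ of $G$. If $|N_G[v]|<r$, then $v$ lies in no $r$-connected subset, so $\Con_r(G)=\Con_r(G\setminus v)$ and the induction hypothesis on $G\setminus v$ applies. Otherwise, the aim is to exhibit an $(r-1)$-subset $Z\subseteq N_G[v]$ (containing $v$) that is a simplicial maximal subcircuit of $\Con_r(G)^c$. Since $N_G[v]$ is a clique and $G[Z]$ is connected, unpacking the neighborhood gives $N_{\Con_r(G)^c}[Z]=Z\cup(V(G)\setminus N_G[Z])$, so the clique condition reduces to showing that $G[V(G)\setminus N_G[Z]]$ contains no $r$-connected subset; this is precisely where the $r$-gap-free hypothesis is invoked, together with the chordal structure around $v$ to rule out bridging $r$-connected sets mixing vertices inside and outside $N_G[Z]$. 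Deleting $Z$ from $\Con_r(G)^c$ then reduces to the same set-up on a smaller chordal $r$-gap-free graph, and induction concludes.

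For $(5)\Rightarrow (1)$, I would feed the chordal elimination of $\Con_r(G)^c$ into Theorem \ref{V.D of Sigma_r(G)}. Each simplicial maximal subcircuit $Z_i$ of the elimination satisfies, by the analysis above, the subset condition $E(\Con_r(G_{i-1}\setminus N_{G_{i-1}}[Z_i]))=\emptyset$. From $Z_i$ one extracts a single vertex $x_i$, namely a simplicial vertex of $G_{i-1}$ used in building $Z_i$, so that the stronger per-vertex condition $E(\Con_r(G_{i-1}\setminus N_{G_{i-1}}[x_i]))=\emptyset$ also holds. Feeding $x_1,x_2,\dots,x_k$ into Theorem \ref{V.D of Sigma_r(G)} then yields vertex decomposability of $\Sigma_r(G)$.

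The principal obstacle, I expect, lies in the structural work of $(4)\Rightarrow (5)$ and its refinement in $(5)\Rightarrow (1)$: a simplicial maximal $(r-1)$-subcircuit of the $r$-uniform clutter $\Con_r(G)^c$ does not correspond to a single simplicial vertex of $G$, and both the exclusion of bridging $r$-connected sets and the subsequent passage from an $(r-1)$-subcircuit to a single eliminating vertex require careful coupling of the perfect elimination order of $G$ with the $r$-gap-free hypothesis—this is precisely what distinguishes the chordal setting from the general one.
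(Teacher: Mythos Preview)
The paper does not run your cycle. It proves $(4)\Rightarrow(1)$ directly and obtains the equivalence with $(5)$ purely by citation, via $(1)\Rightarrow(5)$ from \cite[Theorem~3.10]{CCVD} and $(5)\Rightarrow(3)$ from \cite[Theorem~3.3]{SBCC} together with Eagon--Reiner. The heart of the paper's argument is a shedding-vertex search for $(4)\Rightarrow(1)$: pick a simplicial vertex $v$ of $G$, apply the induction hypothesis to the chordal $r$-gap-free graph $G\setminus v$ to obtain a shedding vertex $w$ of $\Sigma_r(G\setminus v)$, and then show that either $w$ or a suitable neighbour of $w$ is already a shedding vertex of $\Sigma_r(G)$; the delicate case uses chordality to force a clique among certain vertices of $N_G(w)$.

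Your $(5)\Rightarrow(1)$ contains a genuine gap. From $E\bigl(\Con_r(G_{i-1}\setminus N_{G_{i-1}}[Z_i])\bigr)=\emptyset$ for an $(r-1)$-set $Z_i$ you cannot deduce the single-vertex condition $E\bigl(\Con_r(G_{i-1}\setminus N_{G_{i-1}}[x_i])\bigr)=\emptyset$ needed in \Cref{V.D of Sigma_r(G)}: since $N_G[x_i]\subseteq N_G[Z_i]$, the inclusion of deleted graphs goes the wrong way. More decisively, the implication $(5)\Rightarrow(1)$ is \emph{false} without chordality of $G$: \Cref{gap-free example} gives a gap-free $G$ with $\Con_3(G)$ co-chordal and $\Sigma_3(G)$ shellable yet not vertex decomposable. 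Any proof of $(5)\Rightarrow(1)$ must therefore use chordality of $G$ in an essential way; yours invokes it only through the specific elimination produced in your $(4)\Rightarrow(5)$ construction, so the argument collapses to $(4)\Rightarrow(1)$ and the step $(5)\Rightarrow(1)$ is not independent. Your $(4)\Rightarrow(5)$ sketch is likewise incomplete: the existence of an $(r{-}1)$-subset $Z\subseteq N_G[v]$ with $E(\Con_r(G\setminus N_G[Z]))=\emptyset$ is asserted but not established, and after the clutter deletion $\Con_r(G)^c\setminus_d Z$ the resulting clutter is not of the form $\Con_r(G')^c$ for an induced subgraph $G'$, so the induction does not close as stated.
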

	\begin{proof}
		The implications \emph{(1) $\Rightarrow$ (2) $\Rightarrow$(3)} follows from the known hierarchy of conditions in \Cref{VdShCm}. Also, \emph{(3)$ \Rightarrow$ (4)} follows from \cite[Theorem 3]{EagonReiner1998} and \cite[Theorem 1.4]{HaWoodroofe2014}, and \emph{(1) $\Rightarrow$ (5)} follows from \Cref{V.D implies chordality}. Moreover, \emph{(5) $\Rightarrow$ (3)} follows from \cite[Theorem 3.3]{SBCC} and \cite[Theorem 3]{EagonReiner1998}. Thus, to conclude the proof of the theorem, it remains to show \emph{(4)} $\Rightarrow$ \emph{(1)}, and we prove this by induction on $|V(G)|$.  

\medskip
        \noindent
        {\bf Base Case:} $|V(G)| \leq r $. Recall that $\Sigma_{r}(G)=\l V(G)\setminus F\mid F\in E(\Con_r(G))\r$. Thus, in this case, $\Sigma_r(G) $ is either the void complex or the empty complex, and hence a vertex decomposable simplicial complex.
        
        \medskip
        \noindent
        {\bf Induction hypothesis:} Let $|V(G)|\ge r+1$, and if $\widehat G$ is a chordal graph such that $|V(\widehat G)|<|V(G)|$ and $\widehat G$ is $r$-gap-free, then $\Sigma_r(\widehat G)$ is vertex decomposable.

\medskip
        \noindent
        {\bf Inductive Step:} Since $G$ is $r$-gap-free we have $\gamma_r(G) \leq 1$. If $\gamma_r(G) = 0$, then $G = G_1 \bigsqcup G_2 \bigsqcup \cdots \bigsqcup G_l$, where $|V(G_i)| \leq r - 1$ for each $i\in[l]$. Therefore, by \Cref{disjoint union vd} it follows that $\Sigma_r(G)$ is vertex decomposable since each $\Sigma_{r}(G_i)$ is a void complex.
		
		Next, we consider the case when $\gamma_r(G) = 1$. If $|E(\Con_r(G))|=1$, then $\Sigma_{r}(G)$ is a simplex, and thus vertex decomposable. Hence, we may assume that $|E(\Con_r(G))|\ge 2$. Note that to show $\Sigma_r(G)$ is vertex decomposable, it is enough to find a shedding vertex. Indeed, if $x$ is a shedding vertex of $\Sigma_{r}(G)$, then by \Cref{Eq. conditions for lk and del} we have
		\[
		\lk_{\Sigma_{r}(G)}(x)=\Sigma_r(G\setminus x), \text{ and } \del_{\Sigma_{r}(G)}(x)=\Sigma_{r-1}(\{x\},G).
		\]
		Since $G\setminus x$ is a chordal graph and $\gamma_r(G\setminus x)\le\gamma_r(G)\le 1$, using the induction hypothesis and by \Cref{Vertex decoposable}, we obtain that the simplicial complexes $\Sigma_r(G\setminus x)$ and $\Sigma_{r-1}(\{x\},G)$ both are vertex decomposable. Thus $\Sigma_{r}(G)$ is a vertex decomposable simplicial complex.
		
		To find a shedding vertex of $\Sigma_{r}(G)$, we use the combinatorial description available in \Cref{Eq. condition for a shedding vertex of Sigma_r(G)}. More specifically, our aim now is to find a vertex $x \in V(G)$ such that
		$E(\Con_r(G \setminus N_G[x])) = \emptyset$. Note that, since $G$ is chordal, $G$ contains a simplicial vertex, say $v$. Let $G'$ denote the graph $G\setminus v$. As before, by the induction hypothesis, we have that $\Sigma_{r}(G')$ is vertex decomposable and hence, it contains a shedding vertex, say $w\in V(G)\setminus\{v\}$. Therefore, by \Cref{Eq. condition for a shedding vertex of Sigma_r(G)} we have $E(\Con_r(G' \setminus N_{G'}[w])) = \emptyset$. We have the following two cases to consider.
		
		\medskip
		\noindent
		{\bf Case I: } $v\in N_G[w]$. In this case, it is easy to see that $E(\Con_r\left(G \setminus N_G[w]\right)) = \emptyset$, and thus $w$ is a shedding vertex of $\Sigma_{r}(G)$. 
		
		\medskip
		\noindent
		{\bf Case II: } $v\notin N_G[w]$. Since $E(\Con_r(G' \setminus N_{G'}[w])) = \emptyset$, in this case we can write $G \setminus (N_G[w] \cup \{v\}) = G_1 \sqcup G_2 \sqcup \cdots \sqcup G_k$, where $|V(G_i)| \leq r - 1$ for each $i=1,\ldots,k$. Moreover, since $v$ is a simplicial vertex of $G$, we can assume, without loss of generality, that $N_G(v) \setminus N_G(w) \subseteq V(G_1)$. Let $G_1'=G[V(G_1)\cup\{v\}]$. Then we can write 
        \[
        G \setminus N_G[w] = G_1' \sqcup G_2 \sqcup \cdots \sqcup G_k,
        \]
        where $|V(G_i)| \leq r - 1$ for $i = 2,\ldots,k$ and $|V(G_1')| \leq r$.
		
		Observe that if $|V(G_1')|\le r-1$, then $E(\Con_r\left(G \setminus N_G[w]\right)) = \emptyset$, and thus $w$ is a shedding vertex of $\Sigma_{r}(G)$. Thus, we may assume that $|V(G_1')|=r$. In this case, let \[
        E(\Con_r(G))=\{\E,\E_1,\ldots,\E_p\},
        \]where $\E=\{u\mid u\in V(G_1')\}$. 
		Note that since $\gamma_r(G)\le 1$, for each $i\in [p]$ there exists some $x_i\in\E$ and $w_i\in \E_i$ such that $\{x_i,w_i\}\in E(G)$. Since $G_1'$ and $G_i$ are disconnected for each $i\in [k]$, we must have $w_i\in N_G(w)$. It may happen that for some $i\neq j$, $w_i=w_j$. Thus, without loss of generality, we may assume that there exists some integer $s\le p$ such that for each $i\in [s]$ we have $x_i\in \E$ and distinct $w_i\in N_G(w)\cap \E_i$ such that $\{x_i,w_i\}\in E(G)$ and furthermore, for each $i\in [p]\setminus[s]$ $\E_i\cap\{w_1,\ldots,w_s\}\neq\emptyset$. Let $W=\{w_1,\ldots,w_s\}$. Our aim now is to prove the following.
		
		\medskip
		\noindent
		{\bf Claim}: $G[W]$ forms a clique in $G$.
		
		\medskip
		\noindent
		{\bf Proof of Claim:} Let us choose some distinct $i,j\in [s]$. Since $G_1'$ forms a connected component of $G$, let us choose an induced path $\mathcal P$ from $x_i$ to $x_j$. Then $\mathcal P\cup\{w_i,w,w_j\}$ forms a path in $G$. By construction, there is no circuit between the vertices of $\mathcal P$ and $w$. However, there might be circuits between the vertices of $\mathcal P$ and $w_i,w_j$. Thus, if necessary, considering an induced path between $w_i$ and $w_j$ via the vertices of $\mathcal P$, we obtain a cycle of length at least $4$ on the vertices $w,w_i,w_j$ and a subset of the vertices of $\mathcal P$. Since $G$ is chordal, we must have $\{w_i,w_j\}$, and this completes the proof of the claim. 
		
		Using the claim we see that for each $i\in [s]$, $N_G[w_i]\cap\E\neq\emptyset$ and $N_G[w_i]\cap \E_i\neq\emptyset$ for all $i\in [p]$.Consequently, for each $i\in[s]$, $E(\Con_r(G \setminus N_G[w_i])) = \emptyset$, and hence, $w_i$ is a shedding vertex of $\Sigma_r(G)$. This completes the proof of the theorem.
	\end{proof}
	
	\begin{remark}
		In \cite{LQCG}, it was shown that if $G$ is a chordal graph, then for the $r$-connected ideal, the linear resolution and linear quotient property both are equivalent to $G$ being $r$-gap-free. Recall from \Cref{preliminaries} that the $r$-connected ideal of $G$ is the Stanley-Reisner ideal of $\Indr$. Thus, using \cite[Theorem 2.3]{MoradiAhang2016}, we obtain an extension of \cite[Theorem 5.1]{LQCG}: for the $r$-connected ideal of a chordal graph $G$, the linear resolution, linear quotient, and the vertex splittable property are all equivalent to $G$ being $r$-gap-free. This also provides an affirmative answer to the first part of \cite[Question 7.5]{LQCG}.
	\end{remark}
	
	The next theorem concerns the complex $\Sigma_r(G)$ in the case where $G$ is a co-chordal graph. We note that an equivalent algebraic version of this result was previously established in \cite[Theorem 3.12]{FVDH}, using the concept of vertex splittability of a monomial ideal. In contrast, our proof is combinatorial in nature and relies on the sufficient condition for vertex decomposability of $\Sigma_r(G)$ developed in \Cref{sec-3}. 

    
    
	\begin{theorem}\label{V.D of Sigma_r(G) if G is co-chordal}
		For $r\geq 2$, if $G$ is co-chordal, then $\Sigma_r(G)$ is vertex decomposable. 
	\end{theorem}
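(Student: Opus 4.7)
The plan is to apply \Cref{V.D of Sigma_r(G)} directly, using a perfect elimination ordering of the chordal graph $\overline{G}$ as the prescribed shedding sequence for $\Sigma_r(G)$. Since $\overline{G}$ is chordal, it admits an ordering $v_1, v_2, \ldots, v_n$ of its vertices such that, for each $i \in \{0, 1, \ldots, n-1\}$, the vertex $v_{i+1}$ is simplicial in the induced subgraph $\overline{G}[\{v_{i+1}, \ldots, v_n\}]$, which coincides with $\overline{G \setminus \{v_1, \ldots, v_i\}}$. I would propose this same ordering as a candidate shedding order on the $G$ side.

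Setting $G_0 = G$ and $G_i = G \setminus \{v_1, \ldots, v_i\}$, the key translation I would exploit is the following: if $v_{i+1}$ is simplicial in $\overline{G_i}$, then $N_{\overline{G_i}}[v_{i+1}] = V(G_i) \setminus N_{G_i}(v_{i+1})$ is a clique in $\overline{G_i}$, which is equivalent to saying it is an independent set in $G_i$. Since $G_i \setminus N_{G_i}[v_{i+1}]$ is the induced subgraph on the subset $V(G_i) \setminus N_{G_i}[v_{i+1}] \subseteq V(G_i) \setminus N_{G_i}(v_{i+1})$, it must also be edgeless. Because $r \geq 2$, an edgeless graph admits no connected subset of size $r$, and therefore $E(\Con_r(G_i \setminus N_{G_i}[v_{i+1}])) = \emptyset$. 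By \Cref{Eq. condition for a shedding vertex of Sigma_r(G)}, this is precisely the condition required in part (a) of \Cref{V.D of Sigma_r(G)}.

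To conclude, I would let $k$ be the smallest nonnegative integer with $|E(\Con_r(G_k))| \leq 1$; such $k$ exists because eventually $G_i$ has fewer than $r$ vertices and hence $E(\Con_r(G_i)) = \emptyset$. If $k = 0$, then $\Sigma_r(G)$ is the void complex, the empty complex, or a simplex, and is trivially vertex decomposable. Otherwise, the truncated ordering $v_1, \ldots, v_k$ fulfills both conditions (a) and (b) of \Cref{V.D of Sigma_r(G)}, from which vertex decomposability of $\Sigma_r(G)$ follows. The main conceptual point is simply the dictionary between a simplicial vertex of $\overline{G_i}$ and the shedding criterion expressed via $E(\Con_r(G_i \setminus N_{G_i}[v_{i+1}])) = \emptyset$; once this is set up, the proof is essentially immediate and I do not anticipate any substantive obstacle.
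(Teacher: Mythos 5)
Your proof is correct and rests on exactly the same key observation as the paper's: a simplicial vertex $x$ of $\overline{G_i}$ has independent non-neighborhood in $G_i$, so $G_i \setminus N_{G_i}[x]$ is edgeless and hence $E(\Con_r(G_i \setminus N_{G_i}[x])) = \emptyset$ for $r \ge 2$, making $x$ a shedding vertex. The only difference is packaging: the paper runs a direct induction on $|V(G)|$ (using \Cref{Eq. conditions for lk and del} and \Cref{Vertex decoposable} for the link and deletion), whereas you feed a perfect elimination ordering of $\overline{G}$ into the criterion of \Cref{V.D of Sigma_r(G)}, which amounts to the same iteration.
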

	\begin{proof}
		The proof is again by induction on $|V(G)|$. As before, the base case of the induction procedure is easy to see since $|V(G)| \leq r $ implies $\Sigma_r(G) $ is either the void complex or the empty complex. Thus, we may assume $|V(G)|>r$. Note that for each $y\in V(G)$, the induced subgraph $G\setminus y$ is again a co-chordal graph. Thus, by the induction hypothesis, \Cref{Eq. conditions for lk and del}, and \Cref{Vertex decoposable} it follows that 
        \[
        \lk_{\Sigma_{r}(G)}(y)=\Sigma_r(G\setminus y), \text{ and } \del_{\Sigma_{r}(G)}(y)=\Sigma_{r-1}(\{y\},G),\] both are vertex decomposable simplicial complex. Consequently, $\Sigma_r(G)$ is vertex decomposable provided that $y$ is a shedding vertex of $\Sigma_r(G)$.
		
		To find a shedding vertex, we use \Cref{Eq. condition for a shedding vertex of Sigma_r(G)} and the fact that the complement of $G$ is a chordal graph. Indeed, let $x$ be a simplicial vertex of $G^c$. Then, one can observe that $E(G\setminus N_G[x])=\emptyset$. In particular, $E(\Con_r(G\setminus N_G[x])) = \emptyset$ since $r\ge 2$. Thus, by \Cref{Eq. condition for a shedding vertex of Sigma_r(G)} $x$ is a shedding vertex of $\Sigma_r(G)$, and hence, $\Sigma_r(G)$ is vertex decomposable, as desired.
	\end{proof}
\begin{remark}\label{coChordal123}
    It follows from \Cref{V.D of Sigma_r(G) if G is co-chordal} and \Cref{V.D implies chordality} that if $G$ is co-chordal, then $\Con_r(G)$ is a co-chordal clutter, which shows that \cite[Conjecture 6.1]{FVDH} is true. Here, we remark that one can also deduce \cite[Conjecture 6.1]{FVDH} in the affirmative by using \cite[Theorem 3.12]{FVDH}, \cite[Theorem 2.3]{MoradiAhang2016}, and \Cref{V.D implies chordality}.
\end{remark}
    
	\begin{remark}
		Since $G$ is co-chordal, we have $\gamma_1(G)\le 1$, and hence $\gamma_r(G)\le 1$. Thus, it is evident that for a co-chordal graph $G$ and for each $r\ge 2$, we have $G$ is $r$-gap-free, and the complex $\Sigma_{r}(G)$ is vertex decomposable, and hence, shellable, and Cohen-Macaulay. 
	\end{remark}

 An analogous result to those established in the preceding theorems also holds for cographs. 

\begin{definition}
A \emph{cograph} is defined recursively as follows:

\begin{enumerate}
    \item \textbf{Base Case:}\\
    Every single-vertex graph $K_1$ is a cograph.

    \item \textbf{Recursive Step:} \\
    If $G_1$ and $G_2$ are cographs, then the following graphs are also cographs:
    \begin{itemize}
        \item \textbf{Disjoint union:} $G = G_1 \sqcup G_2$.
        \item \textbf{Join:} $G = G_1 * G_2$.
    \end{itemize}

    \item \textbf{Closure:} \\
    Every cograph can be obtained from $K_1$ by a finite sequence of the above two operations.
\end{enumerate}
\end{definition}

The structural simplicity of cographs, arising from their recursive construction via disjoint unions and joins, allows us to analyze the equivalence effectively.

\begin{theorem}\label{V.D of Sigma_r(G) if G is cograph}
		Let $G$ be a cograph, and $r \geq 2$. The following are equivalent:
		\begin{enumerate}        
			\item $\Sigma_r(G)$ is vertex decomposable.
			\item $\Sigma_r(G)$ is shellable.
			\item $\Sigma_r(G)$ is Cohen-Macaulay.
			\item $G$ is $r$-gap-free.
			\item The clutter $\Con_r(G)$ is co-chordal.
		\end{enumerate}
	\end{theorem}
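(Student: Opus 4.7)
Most of the equivalences will follow by citing exactly the same references used in the chordal case of \Cref{V.D of Sigma_r(G) if G is chordal}: the chain (1) $\Rightarrow$ (2) $\Rightarrow$ (3) is the hierarchy recorded in \Cref{VdShCm}; (3) $\Rightarrow$ (4) comes from combining \cite[Theorem 3]{EagonReiner1998} with \cite[Theorem 1.4]{HaWoodroofe2014}; (1) $\Rightarrow$ (5) is \cite[Theorem 3.10]{CCVD}; and (5) $\Rightarrow$ (3) is \cite[Theorem 3.3]{SBCC} together with \cite[Theorem 3]{EagonReiner1998}. Hence the only part requiring real work is \textbf{(4) $\Rightarrow$ (1)}: if the cograph $G$ is $r$-gap-free, then $\Sigma_r(G)$ is vertex decomposable.

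The plan is to induct on $|V(G)|$ and exploit the recursive definition of cographs. The base case $|V(G)| = 1$ (and more generally $|V(G)|<r$) gives a void or empty $\Sigma_r(G)$, which is vertex decomposable by convention. In the inductive step, by the cograph structure theorem, either $G = G_1 * G_2$ or $G = G_1 \sqcup G_2$ for nonempty cographs $G_1, G_2$ of strictly smaller size.

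Suppose first $G = G_1 * G_2$. Any induced matching of $\Con_r(G_i)$ remains an induced matching in $\Con_r(G)$, since no new circuit of $\Con_r(G)$ can lie entirely inside $V(G_i)$; thus $\gamma_r(G_i) \le \gamma_r(G) \le 1$, so each $G_i$ is itself an $r$-gap-free cograph. By induction both $\Sigma_r(G_1)$ and $\Sigma_r(G_2)$ are vertex decomposable, and \Cref{V.D. of the join of graphs} then yields the vertex decomposability of $\Sigma_r(G)$. Now suppose $G = G_1 \sqcup G_2$. If both pieces admitted $r$-connected subsets $F_1, F_2$, then $\{F_1, F_2\}$ would form an induced matching of size $2$ in $\Con_r(G)$, contradicting $\gamma_r(G)\le 1$. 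Therefore, without loss of generality, $\Con_r(G_2) = \emptyset$, i.e.\ $\Sigma_r(G_2)$ is the void complex. Substituting this into \Cref{Sigma of the disjoint union of graphs} collapses the second summand (its join with the void complex is void), leaving $\Sigma_r(G) = \Sigma_r(G_1) * \Delta_{V(G_2)}$. Since $G_1$ remains an $r$-gap-free cograph of smaller size, induction gives that $\Sigma_r(G_1)$ is vertex decomposable; and \cite[Proposition 2.4]{ProBill1980} preserves vertex decomposability under joining with a simplex.

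I do not anticipate a serious obstacle: the recursive structure of cographs dovetails with the two structural lemmas \Cref{Sigma of the disjoint union of graphs} and \Cref{V.D. of the join of graphs}, and the only real observation needed is that $r$-gap-freeness passes to cograph components, and in fact forces at least one component of a disjoint-union decomposition to have no $r$-connected subsets at all. The mildly subtle point is bypassing the hypothesis ``$|V(G_i)|<r$'' in \Cref{disjoint union vd} by arguing directly via \Cref{Sigma of the disjoint union of graphs}, using the fact that the relevant $\Sigma_r(G_i)$ is already forced to be the void complex.
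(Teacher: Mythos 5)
Your proposal is correct and follows essentially the same strategy as the paper: the four ``easy'' implications are obtained from exactly the same citations, and (4) $\Rightarrow$ (1) is proved by induction on $|V(G)|$ using the cograph recursion, with the join case dispatched by \Cref{V.D. of the join of graphs} exactly as in the paper (including the same observation that $r$-gap-freeness passes to the join factors because the join adds no circuits of $\Con_r$ inside either $V(G_i)$).

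The one place you genuinely diverge is the disjoint-union case. The paper regroups the connected components of $G_1\sqcup G_2$ so that at most one component of size $\ge r$ is isolated on one side, and then invokes \Cref{disjoint union vd}; note, however, that the remaining side $G_1'$ can still have $\ge r$ vertices in total, so the literal hypothesis ``$|V(G_2)|<r$'' of \Cref{disjoint union vd} is not verified there --- what is really used is that $E(\Con_r(G_1'))=\emptyset$. You instead observe directly that $r$-gap-freeness forces one of the two pieces to have no $r$-connected subsets at all (two such subsets in different pieces would form an induced matching of size $2$ in $\Con_r(G)$), and then collapse \Cref{Sigma of the disjoint union of graphs} to $\Sigma_r(G)=\Sigma_r(G_1)*\Delta_{V(G_2)}$ and finish with \cite[Proposition 2.4]{ProBill1980}. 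This is a cleaner and slightly more robust handling of that case, since it bypasses the cardinality hypothesis of \Cref{disjoint union vd} entirely; otherwise the two arguments are the same.
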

\begin{proof}
We proceed similarly to \Cref{V.D of Sigma_r(G) if G is chordal}. The implications \emph{(1) $\Rightarrow$ (2) $\Rightarrow$ (3)} follow from the known hierarchy of conditions in \Cref{VdShCm}. Furthermore, \emph{(3) $\Rightarrow$ (4)} follows from \cite[Theorem 3]{EagonReiner1998} and \cite[Theorem 1.4]{HaWoodroofe2014}, while \emph{(1) $\Rightarrow$ (5)} follows from \Cref{V.D implies chordality}. Finally, \emph{(5) $\Rightarrow$ (3)} follows from \cite[Theorem 3.3]{SBCC} and \cite[Theorem 3]{EagonReiner1998}. Hence, to complete the proof, it remains to establish \emph{(4) $\Rightarrow$ (1)}, which we do by induction on $|V(G)|$.

\medskip
\noindent
{\bf Base Case:} If $|V(G)| \le r$, then $\Sigma_r(G)$ is either the void complex or the empty complex, and is therefore vertex decomposable.

\medskip
\noindent
{\bf Induction Hypothesis:} Let $|V(G)| \ge r+1$, and assume that for any $r$-gap-free cograph $\widehat G$ with $|V(\widehat G)| < |V(G)|$, the complex $\Sigma_r(\widehat G)$ is vertex decomposable.

\medskip
\noindent
{\bf Inductive Step:} Since $G$ is an $r$-gap-free cograph, the recursive definition of cographs yields the following two cases:

\medskip
\noindent
\textbf{Case 1.} 

Suppose that $G$ is the disjoint union of two nonempty cographs $G_1$ and $G_2$. Write
\[
G_1 = \bigsqcup_{i=1}^{n_1} G_{1i}
\qquad\text{and}\qquad
G_2 = \bigsqcup_{j=1}^{n_2} G_{2j},
\]
where $G_{ij}$'s are connected components of $G_i$ for each $i\in[2]$ and $j\in [n_i]$. Since $G$ is $r$-gap-free, either every connected component of $G$ has vertex-set cardinality at most $r-1$, or there is exactly one connected component whose vertex-set cardinality is at least $r$. Without loss of generality, assume that if such a component exists, then it is $G_{2n_2}$.

Now consider
\[
G_1' = \left( \bigsqcup_{i=1}^{n_1} G_{1i} \right) \sqcup \left( \bigsqcup_{j=1}^{n_2-1} G_{2j} \right).
\]
Then
\[
G = G_1' \sqcup G_{2n_2}.
\]

As $G$ is a cograph, $G_1'$ and $G_{2n_2}$ are cographs. Moreover, both $G_1'$ and $G_{2n_2}$ are $r$-gap-free. Thus, by the induction hypothesis and by \Cref{disjoint union vd}, it follows that $\Sigma_r(G)$ is vertex decomposable.

\medskip
\noindent
{\bf Case 2:} $G$ is the join of two nonempty cographs $G_1$ and $G_2$. Observe that both $G_1$ and $G_2$ are $r$-gap-free, and thus, by the induction hypothesis, $\Sigma_r(G_1)$ and $\Sigma_r(G_2)$ are vertex decomposable. Hence, by \Cref{V.D. of the join of graphs}, we have that $\Sigma_r(G)$ is vertex decomposable.

\noindent
This completes the inductive step, and hence the proof.
\end{proof}

\begin{remark}
    Complete multipartite graphs form a subclass of cographs. Hence, by \Cref{V.D of Sigma_r(G) if G is cograph}, for a complete multipartite graph $G$, the properties of vertex decomposability, shellability, and Cohen-Macaulayness of $\Sigma_r(G)$ are all equivalent to $G$ being $r$-gap-free. Furthermore, this is equivalent to the associated clutter $\mathrm{Con}_r(G)$ being co-chordal.

\end{remark}
    
We now turn our attention to the complex $\Sigma_r(C_n)$. We begin with the following lemma, whose proof depends on the computation of the Leray number of $\Ind_r(C_n)$. Recall that, a simplicial complex $\Delta$ is called {\it $d$-Leray} if $\tilde{H}_i(Y;\mathbb{Z})=0$ for all induced subcomplexes $Y \subseteq \Delta$ and for all $i\geq d$. The {\it Leray number} of $\Delta$, denoted $L(\Delta)$, is the minimal $d$ such that $\Delta$ is $d$-Leray.
    \begin{lemma}\label{cycle lemma}
        For the cycle graph $C_n$, if the complex $\Sigma_r(C_n)$ is Cohen-Macaulay, then $n\le r+2$.
    \end{lemma}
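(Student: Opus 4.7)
The strategy is to convert Cohen-Macaulayness of $\Sigma_r(C_n) = (\mathrm{Ind}_r(C_n))^\vee$ into a Leray-number bound on $\mathrm{Ind}_r(C_n)$ via the Eagon-Reiner theorem and Hochster's formula, and then to obstruct that bound for $n \ge r + 3$ by producing a nonzero reduced-homology class in a suitable induced subcomplex.

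Since the Stanley-Reisner ideal $I_{\mathrm{Ind}_r(C_n)}$ is generated in the single degree $r$, the Eagon-Reiner theorem asserts that $\Sigma_r(C_n)$ is Cohen-Macaulay if and only if $I_{\mathrm{Ind}_r(C_n)}$ has an $r$-linear resolution. The Kalai-Meshulam identification $\reg(R/I_\Delta) = L(\Delta)$ (equivalently, a direct application of Hochster's formula to the Betti numbers) translates this into $L(\mathrm{Ind}_r(C_n)) \le r - 1$; explicitly, $\widetilde{H}_i(\mathrm{Ind}_r(C_n)[W]; \mathbb{K}) = 0$ for every $W \subseteq V(C_n)$ and every $i \ge r - 1$. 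The lemma therefore reduces to exhibiting, for each $n \ge r + 3$, an induced subcomplex of $\mathrm{Ind}_r(C_n)$ carrying nonzero reduced homology in some degree $\ge r - 1$.

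For the base case $n = r + 3$ I would work directly with the Alexander dual. The facets of $\Sigma_r(C_{r+3})$ are precisely the $3$-subsets $\{i, i+1, i+2\}$ of three consecutive vertices (since for $n = r+3$ the complement of each $r$-path in $C_n$ is itself a $3$-path), giving $n$ vertices, $n$ short edges $\{i, i+1\}$, $n$ long edges $\{i, i+2\}$, and $n$ triangular facets. A direct tally yields $\widetilde{\chi}(\Sigma_r(C_{r+3})) = n - 2n + n - 1 = -1$, and connectedness of the complex forces $\widetilde{b}_0 = 0$, so $\widetilde{b}_1 \ge 1$. Combinatorial Alexander duality then gives the identification $\widetilde{H}_{r-1}(\mathrm{Ind}_r(C_{r+3}); \mathbb{K}) \cong \widetilde{H}^{1}(\Sigma_r(C_{r+3}); \mathbb{K})$, producing a nonzero class and contradicting the Leray bound.

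For $n > r + 3$ I would split into two regimes. When $n \ge 2r + 2$, the link formula $\mathrm{lk}_{\Sigma_r(C_n)}(F) = \Sigma_r(C_n \setminus F)$ combined with the chordal case already proved in \Cref{V.D of Sigma_r(G) if G is chordal} gives the obstruction directly: one takes $F = \{v, w\}$ with $v, w$ non-adjacent, splitting $C_n$ into two paths each of size $\ge r$, so $C_n \setminus F$ fails to be $r$-gap-free and its link is not Cohen-Macaulay. The main obstacle is the intermediate range $r + 4 \le n \le 2r + 1$, where neither the chordal link reduction nor the elementary Euler-characteristic bookkeeping of the base case settles the case uniformly; the cleanest treatment invokes a known homotopy-type computation of $\mathrm{Ind}_r(C_n)$ (as a wedge of spheres of dimension $\ge r - 1$ for every $n \ge r + 3$) to produce the requisite class in $\widetilde{H}_{\ge r - 1}(\mathrm{Ind}_r(C_n))$.
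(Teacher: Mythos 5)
Your proposal is correct, and its backbone is the same reduction the paper uses: Eagon--Reiner together with Hochster's formula convert Cohen--Macaulayness of $\Sigma_r(C_n)$ into the Leray bound $L(\mathrm{Ind}_r(C_n))\le r-1$, which is then violated for every $n\ge r+3$ by producing reduced homology of $\mathrm{Ind}_r(C_n)$ in some degree $\ge r-1$. The difference is in how that homology is produced. The paper does it uniformly by quoting the wedge-of-spheres homotopy type of $\mathrm{Ind}_r(C_n)$ and checking, in each congruence class of $n$ modulo $r+1$, that the sphere dimension is at least $r-1$. You replace this citation by self-contained arguments at the two ends of the range: for $n=r+3$ your Euler-characteristic count of $\Sigma_r(C_{r+3})$ (facets are the $n$ consecutive triples, giving $\tilde{\chi}=-1$ and hence $\tilde{b}_1\ge 1$ since the complex is connected) combined with combinatorial Alexander duality correctly yields $\widetilde{H}_{r-1}(\mathrm{Ind}_r(C_{r+3});\mathbb K)\neq 0$; and for $n\ge 2r+2$ the link reduction $\lk_{\Sigma_r(C_n)}(\{v,w\})=\Sigma_r(P_a\sqcup P_b)$ with $a,b\ge r$, together with the implication ``Cohen--Macaulay $\Rightarrow$ $r$-gap-free'' already established for chordal graphs, is a valid and non-circular obstruction. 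However, for the intermediate range $r+4\le n\le 2r+1$ you fall back on exactly the homotopy-type computation the paper invokes, so the extra machinery buys partial independence from that external result at the endpoints without eliminating the dependence overall; the paper's uniform case analysis is shorter and covers all $n\ge r+3$ at once.
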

    \begin{proof}
       We show that if $n>r+2$, then $\Sigma_r(C_n)$ is not Cohen-Macaulay. Indeed, by the Eagon-Reiner theorem \cite[Theorem 3]{EagonReiner1998} and Hochster's formula \cite{Hochster77}, it is enough to show that the Leray number $L(\Ind_{r}(C_n))>r-1$ for each $n>r+2$. By \cite[Theorem 4.6]{HICH} we have 
       \begin{align}\label{eq3546}
       \Ind_{r}(C_n) \cong
\begin{cases} 
     \displaystyle \bigvee_{r}\mathbb{S}^{rk-k-1} & \mathrm{if\ } n =(r+1)k;\\
     \mathbb{S}^{rk-k-1} & \mathrm{if\ } n = (r+1)k+ 1;\\
     \mathbb{S}^{rk-k} & \mathrm{if\ } n = (r+1)k+2;\\
     \vdots & \vdots\\
     \mathbb{S}^{rk-k+r-2} & \mathrm{if\ } n = (r+1)k+r,
   \end{cases}
\end{align}
where $n-1\geq r \geq 2$ and $k=\lfloor\frac{n}{r+1}\rfloor$. Since $n>r+2$, we can write $n=(r+1)k+l$, where $2\le l\le r$ in case $k=1$, and $0\le l\le r$ in case $k\ge 2$. First, consider the case when $2\le l\le r$ with $k\ge 1$. Then from \Cref{eq3546} we have 
\[
\tilde{H}_{rk-k+l-2}(\Ind_{r}(C_n);\mathbb{K}) \cong \mathbb Z.
\] 
Observe that $rk-k+l-2\ge r-1$ since $l\ge 2$ $k\ge 1$, and $r\ge 2$. Thus, $L(\Ind_r(C_n)>r-1$, as required. Next, we consider the case where $l=0,1$ and $k\ge 2$. In this case, again by \Cref{eq3546}, we have
\begin{align*}
\tilde{H}_{rk-k-1}(\Ind_{r}(C_n);\mathbb{K}) \cong \begin{cases}
    \mathbb Z^r &\text{ if }l=0,\\
    \mathbb Z &\text{ if }l=1.
\end{cases} 
\end{align*}
Observe that $rk-k-1\ge r-1$ since $k\ge 2$ and $r\ge 2$. Thus, $L(\Ind_r(C_n)>r-1$ in this case too, and this completes the proof.
    \end{proof}
    \begin{remark}
        The conclusion of \Cref{cycle lemma} can also be deduced from \cite[Proposition 5.3]{LQCG}, where the authors have used the fact that for cycle graphs, the $r$-connected ideal is the same as the $r$-path ideal.
    \end{remark}
        
	\begin{theorem}\label{cycle vd theorem} 
		Let $n \geq 3$ and $r\geq 2$, and let $C_n$ be the cycle graph on $n$ vertices. The following statements are equivalent:
		\begin{enumerate}        
			\item $\Sigma_r(C_n)$ is vertex decomposable.
			\item $\Sigma_r(C_n)$ is shellable.
			\item $\Sigma_r(C_n)$ is Cohen-Macaulay.
			\item $n \leq r+2$.
			\item The clutter $\Con_r(C_n)$ is co-chordal.
		\end{enumerate}
	\end{theorem}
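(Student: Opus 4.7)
The plan is to mirror the proof strategies of \Cref{V.D of Sigma_r(G) if G is chordal} and \Cref{V.D of Sigma_r(G) if G is cograph}, adapting them to the cycle setting. Specifically, the implications \emph{(1) $\Rightarrow$ (2) $\Rightarrow$ (3)} follow from the hierarchy in \Cref{VdShCm}; \emph{(3) $\Rightarrow$ (4)} is precisely \Cref{cycle lemma}; \emph{(1) $\Rightarrow$ (5)} follows from \cite[Theorem 3.10]{CCVD}; and \emph{(5) $\Rightarrow$ (3)} follows by combining \cite[Theorem 3.3]{SBCC} with \cite[Theorem 3]{EagonReiner1998}. Hence the only new content is \emph{(4) $\Rightarrow$ (1)}, which I will establish by direct combinatorial analysis in the three sub-cases $n\le r$, $n=r+1$, and $n=r+2$.

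For $n\le r$, the support $\Supp_r(\emptyset,C_n)$ is either empty (if $n<r$) or the singleton $\{V(C_n)\}$ (if $n=r$), so $\Sigma_r(C_n)$ is the void complex or the empty complex, both of which are vertex decomposable. For $n=r+1$, every $r$-subset $F\subseteq V(C_{r+1})$ is the complement of a single vertex, and $C_{r+1}[F]\cong P_r$ is connected; thus the facets of $\Sigma_r(C_{r+1})$ are the $r+1$ singletons, so $\Sigma_r(C_{r+1})$ is a $0$-dimensional complex on $r+1$ isolated points. Such a discrete complex is vertex decomposable by a trivial induction on the number of vertices: for any vertex $v$, $\lk(v)=\{\emptyset\}$ is a $(-1)$-simplex, $\del(v)$ is a smaller discrete $0$-dimensional complex of the same type, and every facet of $\del(v)$ remains a facet of the whole complex.

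The main case is $n=r+2$, where the central observation is that for an $r$-subset $F\subseteq V(C_{r+2})$, the induced subgraph $C_{r+2}[F]$ is connected if and only if the two-element complement $F^c$ is an edge of $C_{r+2}$; indeed, deleting two adjacent vertices from the cycle leaves a single path, while deleting two non-adjacent vertices breaks the cycle into two pieces. Consequently, the facets of $\Sigma_r(C_{r+2})$ are exactly the $r+2$ edges of $C_{r+2}$, so $\Sigma_r(C_{r+2})$ is isomorphic to the cycle $C_{r+2}$ viewed as a $1$-dimensional simplicial complex. To conclude, I would pick any vertex $v$: the link $\lk(v)$ is the $0$-dimensional complex on the two neighbors of $v$ (vertex decomposable by the previous case), the deletion $\del(v)$ is the path $P_{r+1}$ on the remaining vertices (vertex decomposable by a standard leaf-removal induction), and each facet of $\del(v)$ is an edge of the original cycle, hence a facet of $\Sigma_r(C_{r+2})$. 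The only place needing genuine care is the combinatorial identification in the $n=r+2$ case; everything else reduces to routine vertex-decomposability arguments for discrete complexes, paths, and cycles.
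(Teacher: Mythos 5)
Your proposal is correct, and the reductions for \emph{(1)$\Rightarrow$(2)$\Rightarrow$(3)}, \emph{(3)$\Rightarrow$(4)}, \emph{(1)$\Rightarrow$(5)}, and \emph{(5)$\Rightarrow$(3)} coincide exactly with the paper's. Where you genuinely diverge is in \emph{(4)$\Rightarrow$(1)}. The paper gives a single uniform argument: it first observes that \emph{every} vertex $x$ of $C_n$ is a shedding vertex of $\Sigma_r(C_n)$, because $C_n\setminus N_{C_n}[x]$ is a path on $n-3\le r-1$ vertices and hence $E(\Con_r(C_n\setminus N_{C_n}[x]))=\emptyset$ (via \Cref{Eq. condition for a shedding vertex of Sigma_r(G)}); it then disposes of the deletion $\Sigma_{r-1}(\{x\},C_n)$ by \Cref{Vertex decoposable} and of the link $\Sigma_r(C_n\setminus x)=\Sigma_r(P_{n-1})$ by invoking \Cref{V.D of Sigma_r(G) if G is chordal}, since $P_{n-1}$ is chordal and $r$-gap-free when $n-1\le r+1$. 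You instead identify the complex explicitly in each of the three regimes: void or empty for $n\le r$, the discrete complex on $r+1$ points for $n=r+1$, and the $1$-dimensional cycle $C_{r+2}$ for $n=r+2$ (your key observation that $C_{r+2}[F]$ is connected iff $F^c$ is an edge of $C_{r+2}$ is correct, since deleting two non-adjacent vertices splits the cycle into two nonempty arcs), and then you verify vertex decomposability of these concrete complexes by hand. Your route is more elementary and self-contained — it needs neither the chordal-graph theorem nor \Cref{Vertex decoposable} — at the cost of a case split and an explicit combinatorial identification; the paper's route is shorter and scales without case analysis because it delegates both the link and the deletion to previously established general results. Both proofs are complete and correct.
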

	\begin{proof}
		We first show that \emph{(4) $\Rightarrow$ (1)}. As before, by \Cref{Eq. conditions for lk and del}, for any $x\in V(C_n)$ we have  
        \[
        \del_{\Sigma_r(C_n)}(x) = \Sigma_{r-1}(\{x\},C_n),
        \]
        which is vertex decomposable (by \Cref{Vertex decoposable}). Furthermore, 
    \[\lk_{\Sigma_r(C_n)}(x) = \Sigma_r(C_n\setminus x),
    \]
    where $C_n \setminus x$ is a path on $n - 1 \leq r + 1$ vertices, which is both $r$-gap-free and chordal. Therefore, by \Cref{V.D of Sigma_r(G) if G is chordal}, $\Sigma_r(C_n \setminus x)$ is vertex decomposable. Thus, in order to show that $\Sigma_r(C_n)$ is vertex decomposable, we need to find a shedding vertex. Note that for any vertex $x \in V(C_n)$, the graph $C_n \setminus N_{C_n}[x]$ is a path on $n-3$ vertices. Since $n \leq r+2$, we have
		$|V(C_n \setminus N_{C_n}[x])|\leq r - 1$, and hence, $E(\Con_r(C_n \setminus N_{C_n}[x])) = \emptyset$. Therefore, any $x\in V(C_n)$ is a shedding vertex of $\Sigma_{r}(C_n)$. Consequently, $\Sigma_r(C_n)$ is a vertex decomposable simplicial complex. 
		
		The implications \emph{(1) $\Rightarrow$ (2) $\Rightarrow$(3)} follows from the known hierarchy of conditions in \Cref{VdShCm}. Also, \emph{(3) $\Rightarrow$(4)} follows from \Cref{cycle lemma}, and \emph{(1) $\Rightarrow$ (5)} follows from \Cref{V.D implies chordality}. Moreover, \emph{(5) $\Rightarrow$ (3)} follows from \cite[Theorem 3.3]{SBCC} and \cite[Theorem 3]{EagonReiner1998}. This completes the proof of the theorem.
	\end{proof}

In the theorem below, we examine the complex $\Sigma_r(G)$ when $G = C_n^c$, the complement of the cycle graph $C_n$.
    
	\begin{theorem}\label{cycle complement}
		Let $n \geq 3$ and $r \geq 3$. Then $\Sigma_r(C_n^{c})$ is vertex decomposable.
	\end{theorem}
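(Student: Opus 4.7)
The plan is to give a direct, one-step vertex decomposition of $\Sigma_r(C_n^c)$ by exhibiting every vertex as a shedding vertex. First I would dispose of the trivial range $n \le r$ (where the complex is void or a simplex, hence vertex decomposable). For $n \ge r+1$ I would pick an arbitrary $x \in V(C_n^c)$ and argue from the structure of the complement cycle.

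The key observation I would use is that in $C_n^c$ every vertex $x$ has exactly two non-neighbors, namely the two $C_n$-neighbors of $x$. Consequently $V(C_n^c) \setminus N_{C_n^c}[x]$ has exactly two vertices, and since $r \ge 3$ there can be no $r$-element connected subset inside this two-vertex induced subgraph. Hence $E(\mathrm{Con}_r(C_n^c \setminus N_{C_n^c}[x])) = \emptyset$, and by \Cref{Eq. condition for a shedding vertex of Sigma_r(G)} (together with \Cref{Eq. Condition for a shedding vertex}) the vertex $x$ is a shedding vertex of $\Sigma_r(C_n^c)$.

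Once $x$ is known to be shedding, I would invoke \Cref{Eq. conditions for lk and del} to write
\[
\del_{\Sigma_r(C_n^c)}(x) = \Sigma_{r-1}(\{x\}, C_n^c), \qquad \lk_{\Sigma_r(C_n^c)}(x) = \Sigma_r(C_n^c \setminus x).
\]
The deletion is vertex decomposable by \Cref{Vertex decoposable} applied to the connected set $A = \{x\}$. For the link, the induced subgraph $C_n \setminus x$ is the path $P_{n-1}$, which is a tree and therefore chordal, so $C_n^c \setminus x = (C_n \setminus x)^c = P_{n-1}^c$ is co-chordal. \Cref{V.D of Sigma_r(G) if G is co-chordal} then yields vertex decomposability of $\Sigma_r(P_{n-1}^c)$.

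Both the deletion and the link being vertex decomposable, and $x$ being a shedding vertex, gives the desired conclusion in a single step; no induction on $n$ is required. There is essentially no hard step here: the whole argument rides on the structural fact that $|V(C_n^c) \setminus N_{C_n^c}[x]| = 2 < r$, after which the deletion is handled by the general result for nonempty connected $A$ and the link is handled by the co-chordal case. The only thing to be careful about is making sure the trivial small-$n$ regimes (where $\Sigma_r(C_n^c)$ is void, empty, or a simplex) are acknowledged separately before applying the shedding-vertex machinery.
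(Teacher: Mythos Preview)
Your proposal is correct and matches the paper's proof essentially verbatim: exhibit any vertex $x$ as shedding via $|V(C_n^c)\setminus N_{C_n^c}[x]|=2<r$, handle the deletion by \Cref{Vertex decoposable}, and handle the link by observing that $C_n^c\setminus x\cong P_{n-1}^c$ is co-chordal so \Cref{V.D of Sigma_r(G) if G is co-chordal} applies. The only cosmetic difference is that the paper separates the degenerate cases as $n\in\{3,4\}$ rather than $n\le r$ (your split leaves $n=4,\,r=3$ in the nontrivial range even though $\Sigma_3(C_4^c)$ is void, but your closing caveat already anticipates exactly this adjustment).
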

	
	\begin{proof}
		It is easy to see that for $n=3,4$,  
		$\Sigma_r(C_n^{c})$ is the void complex, and thus vertex decomposable. Therefore, we may assume $n\ge 5$. Let $x \in V(C_n^{c})$ be any vertex. Then the induced subgraph $C_n^{c} \setminus N_{C_n^{c}}[x]$ consists of a single circuit between the vertices that are adjacent to $x$ in $C_n$. In particular, for $r \geq 3$, we have
		\[
		E(\Con_r(C_n^{c} \setminus N_{C_n^{c}}[x])) = \emptyset.
		\]
		Thus, $x$ is a shedding vertex of $\Sigma_r(C_n^{c})$. For such an $x$, we have the following:
		
		\begin{itemize}
			\item $\del_{\Sigma_r(C_n^c)}(x) = \Sigma_{r-1}(\{x\},C_n^c)$, by \Cref{Eq. conditions for lk and del}, and this complex is vertex decomposable, by \Cref{Vertex decoposable}.
			
			\item $\lk_{\Sigma_r(C_n^c)}(x) = \Sigma_r(C_n^c \setminus x)$, where $C_n^{c} \setminus \{x\}$ is isomorphic to $P_{n-1}^{c}$, the complement of the path on $n-1$ vertices. Since $P_{n-1}^{c}$ is a co-chordal graph, it follows from \Cref{V.D of Sigma_r(G) if G is co-chordal} $\Sigma_r(C_n^{c} \setminus x)$ is vertex decomposable.
		\end{itemize}
		
		Hence, $\Sigma_r(C_n^c)$ is vertex decomposable.
	\end{proof}
	
	\begin{remark}
		In \cite[Theorem 5.4]{FVDH}, it was shown that the $r$-connected ideal $I_{\mathrm{Ind}_r(C_n^c)}$ has a linear resolution, equivalently that $\Sigma_r(G)$ is Cohen–Macaulay. In \Cref{cycle complement} we have strengthened this result by showing that the complex is, in fact, vertex decomposable.
	\end{remark}
	\begin{remark}
		It is well-known that for $r=2$ and $n\ge 4$, the complex $\Sigma_2(C_n^{c})$ is not even Cohen-Macaulay (follows from Fr\"oberg's theorem). Hence, the condition $r\ge 3$ is necessary in \Cref{cycle complement} in order to show the vertex decomposability of $\Sigma_r(C_n^{c})$.
	\end{remark}

Our next two results concern the ladder graph $P_n\times P_2$ and the graph $P_n\times P_3$. Recall that the cartesian product (or simply {\it product}) of two disjoint graphs $G_1$ and $G_2$, denoted by $G_1\times G_2$, consists of the vertex set $V(G_1\times G_2)=\{(x,y)\mid x\in V(G_1)\text{ and }y\in V(G_2)\}$, and the circuit set $E(G_1\times G_2)=\{\{(x,y),(a,b)\}\mid \text{either }x=a\text{ with }\{y,b\}\in E(G_2)\text{ or }y=b\text{ with }\{x,a\}\in E(G_1)\}$.
    
	For the ladder graph $P_n \times P_2$, the following characterization holds.
	
	\begin{theorem}\label{ladder graph}
		Let $G=P_n \times P_2$. For $r\ge 2$, the following are equivalent:
		\begin{enumerate}
			\item $\Sigma_r(G)$ is vertex decomposable.
			\item $\Sigma_r(G)$ is shellable.
			\item $\Sigma_r(G)$ is Cohen-Macaulay.
			\item $G$ is r-gap-free.
			\item $n \leq r$.
			\item The clutter $\Con_r(G)$ is co-chordal.    \end{enumerate}
	\end{theorem}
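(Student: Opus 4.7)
The routine implications $(1)\Rightarrow(2)\Rightarrow(3)$, $(3)\Rightarrow(4)$, $(1)\Rightarrow(6)$, and $(6)\Rightarrow(3)$ follow exactly as in \Cref{V.D of Sigma_r(G) if G is chordal}: by \Cref{VdShCm}, by \cite[Theorem 3]{EagonReiner1998} together with \cite[Theorem 1.4]{HaWoodroofe2014}, by \cite[Theorem 3.10]{CCVD}, and by \cite[Theorem 3.3]{SBCC} together with \cite[Theorem 3]{EagonReiner1998} respectively. The substantive work therefore lies in establishing $(4)\Leftrightarrow(5)$ and $(5)\Rightarrow(1)$.

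For $(5)\Rightarrow(4)$ a simple pigeonhole argument suffices: if $n\le r$, then $|V(G)|=2n\le 2r$, so two disjoint $r$-connected subsets can exist only when $n=r$, and in that case they partition $V(G)$. Since $G=P_n\times P_2$ is connected, at least one edge joins them, so they cannot form an induced matching in $\Con_r(G)$, giving $\gamma_r(G)\le 1$. Conversely, for $(4)\Rightarrow(5)$ I argue by contrapositive, exhibiting a gap whenever $n\ge r+1$. For $r=2k$ even I take
\[
F_1=\{(i,j):1\le i\le k,\,j\in\{1,2\}\},\qquad F_2=\{(i,j):k+2\le i\le 2k+1,\,j\in\{1,2\}\};
\]
both are connected $r$-sets occupying column-ranges that differ by at least $2$, so there are no edges between them, and $\{F_1,F_2\}$ is an induced matching in $\Con_r(G)$. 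For $r=2k+1$ odd I use the analogous staircase construction, with $F_1=\{(i,1):1\le i\le k\}\cup\{(i,2):1\le i\le k+1\}$ (size $r$, connected) and a mirror-image $F_2$ occupying columns $k+3,\dots,2k+2$; here the boundary vertices $(k+1,2)\in F_1$ and $(k+2,1)\in F_2$ are not adjacent, so again no edges lie between $F_1$ and $F_2$.

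For $(5)\Rightarrow(1)$ I appeal to the iterative criterion of \Cref{V.D of Sigma_r(G)}, constructing an explicit shedding sequence. I would begin by setting $x_1=(i_0,1)$ for an interior index $i_0\in\{2,\dots,n-1\}$ chosen so that $G\setminus N_G[x_1]$ breaks into two components of sizes $2i_0-3$ and $2(n-i_0)-1$, both strictly less than $r$. The simultaneous inequalities $i_0<(r+3)/2$ and $i_0>n-(r+1)/2$ admit an integer solution in $\{2,\dots,n-1\}$ whenever $n\le r$ and $r\ge 3$; the boundary case $r=n=2$ is handled instead by the corner vertex $(1,1)$, whose closed-neighborhood complement has size $2n-3<r$. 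By \Cref{Eq. condition for a shedding vertex of Sigma_r(G)}, $x_1$ is then a shedding vertex of $\Sigma_r(G)$, while $\del_{\Sigma_r(G)}(x_1)=\Sigma_{r-1}(\{x_1\},G)$ is vertex decomposable by \Cref{Vertex decoposable}. One then continues extracting shedding vertices from the successive graphs $G_i=G_{i-1}\setminus x_i$ until $|E(\Con_r(G_k))|\le 1$, at which point \Cref{V.D of Sigma_r(G)} yields vertex decomposability.

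The principal technical obstacle is sustaining this induction through the intermediate graphs $G_i$, which are no longer ladders. After removing a middle top-row vertex, $G$ becomes two ``half-ladders'' glued along the surviving bottom row, and a careful case analysis is needed to select the next shedding vertex so that its closed neighborhood again cleaves the remaining graph into components each smaller than $r$. I expect that successive shedding vertices can be chosen alternately from the top and bottom rows near the current ``bottleneck'', progressively truncating the ladder from the centre outward; at the terminal stage the surviving graph is either a disjoint union of small pieces or a path, in which case $|E(\Con_r(G_k))|\le 1$ or \Cref{V.D of Sigma_r(G) if G is chordal} already applies.
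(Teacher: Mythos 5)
The routine implications and the $(4)\Leftrightarrow(5)$ equivalence in your proposal are essentially correct and match the paper: your gap constructions for $n\ge r+1$ (full double columns in the even case, a staircase with one overhanging vertex in the odd case) are the same in spirit as the paper's $\E_1,\E_2$, up to a small notational slip in the odd case (you say $F_2$ occupies columns $k+3,\dots,2k+2$ but then cite $(k+2,1)\in F_2$; the intended mirror image occupies columns $k+2,\dots,2k+2$ with column $k+2$ present only in one row). The extra implication $(5)\Rightarrow(4)$ you prove is harmless but redundant, since the cycle of implications already closes.

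The genuine gap is in $(5)\Rightarrow(1)$. You correctly identify a first shedding vertex $x_1=(i_0,1)$ via \Cref{Eq. condition for a shedding vertex of Sigma_r(G)}, but then you leave the continuation of the shedding sequence through the non-ladder graphs $G_i$ as an acknowledged ``principal technical obstacle,'' offering only the expectation that alternating top/bottom choices near the bottleneck will work. As written, the argument does not establish condition (1) of \Cref{V.D of Sigma_r(G)}, so vertex decomposability is not proved. The paper sidesteps this entirely: it takes the shedding sequence to be the \emph{two} vertices of a single middle rung, $a_1=(x_m,y_1)$ and $a_2=(x_m,y_2)$ (with $m=\lceil n/2\rceil$ adjusted for parity). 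One checks that $a_1$ is a shedding vertex of $\Sigma_r(G)$ and $a_2$ is a shedding vertex of $\Sigma_r(G\setminus a_1)$, and after deleting both, the entire middle column is gone, so $G\setminus\{a_1,a_2\}$ is a disjoint union of two strictly smaller ladders, each with at most $n\le r$ vertices; hence $|E(\Con_r(G\setminus\{a_1,a_2\}))|\le 1$ (equal to $1$ exactly when one half has precisely $r$ vertices) and the sequence terminates at $k=2$. The key observation you missed is that condition (b) of \Cref{V.D of Sigma_r(G)} only requires reaching $|E(\Con_r(\cdot))|\le 1$, not exhausting the graph, so one never has to analyze intermediate graphs beyond the second step. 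There is also a minor edge-case flaw in your setup: for $n=2$ and $r=3$ the interior index set $\{2,\dots,n-1\}$ is empty while $E(\Con_3(C_4))$ has four circuits, so your interior-vertex choice is unavailable and your separate treatment covers only $r=n=2$; the rung-based choice handles this uniformly.
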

	
	\begin{proof}
		Let $V(P_n)=\{x_1,\ldots,x_n\}$ and $V(P_2)=\{y_1,y_2\}$.
		
		We first show \emph{(4) $\Rightarrow$ (5)}. Suppose if possible, \emph{(4)} holds but $n\ge r+1$. Our aim is to show that $G$ is not $r$-gap-free. Consider the following two cases:
		
		\noindent
		{\bf Case I:} $r$ is even, i.e., $r=2s$ for some $s\ge 1$. Let us consider the two subsets of vertices
		\begin{align*}
			\E_1&=\{(x_i,y_j)\mid i\in\{n-(s-1),\ldots,n-1,n\},j\in[2]\},\\
			\E_2&=\{(x_i,y_j)\mid i\in[s],j\in[2]\}.
		\end{align*}
		Observe that $\E_1,\E_2\in E(\Con_r(G))$ and they form a gap.
		
		\noindent
		{\bf Case II:} $r$ is odd, i.e., $r=2s+1$ for some $s\ge 1$. Let us consider the two subsets of vertices
		\begin{align*}
			\E_1&=\{(x_i,y_j),(x_{n-s},y_1)\mid i\in\{n-(s-1),\ldots,n-1,n\},j\in[2]\},\\
			\E_2&=\{(x_i,y_j),(x_{s+1},y_2)\mid i\in[s],j\in[2]\}.
		\end{align*}
		As before, one can see that $\E_1,\E_2\in E(\Con_r(G))$ and they form a gap. Thus, we have \emph{(4) $\Rightarrow$ (5)}.
		
		Next, we proceed to show \emph{(5) $\Rightarrow$ (1)}. To prove this, we use \Cref{V.D of Sigma_r(G)}.
		
		\noindent
		{\bf Case I:} $n$ is even, i.e., $n=2m$ for some $m\ge 1$. In this case, we take $a_1=(x_m,y_1), a_2=(x_m,y_2)$, and show that the sequence of vertex $a_1,a_2$ satisfy condition (1) in \Cref{V.D of Sigma_r(G)}. Indeed, observe that $G\setminus N_G[a_1]=H_1\sqcup H_2$, where $H_1$ is the induced subgraph of $G$ on the vertex set 
    \[
    \{(x_i,y_j),(x_{m+1},y_2)\mid i\in [n]\setminus [m+1],j\in[2]\},
    \]
    and $H_2$ is the induced subgraph of $G$ on the vertex set 
    \[
    \{(x_i,y_j),(x_{m-1},y_2)\mid i\in [m-2],j\in[2]\}.
    \]
    Thus, $E(\Con_r(G\setminus N_G[a_1]))=\emptyset$. Next, consider the graph $G_1=G\setminus a_1$. Then analyzing the graph $G_1$ as above one can see that $E(\Con_r(G_1\setminus N_{G_1}[a_2]))=\emptyset$, and
		\begin{align*}
			|E(\Con_r(G\setminus \{a_1,a_2\}))|=\begin{cases}
				1&\text{ if }r=2m\\
				0&\text{ if }r>2m.
			\end{cases}
		\end{align*}
		Thus, in any case, $|E(\Con_r(G\setminus \{a_1,a_2\}))|\le 1$, as desired. 
		
		\noindent
		{\bf Case II:} $n$ is odd, i.e., $n=2m+1$ for some $m\ge 1$. In this case, we take $a_1=(x_{m+1},y_1), a_2=(x_{m+1},y_2)$ and show that the sequence of vertex $a_1,a_2$ satisfy condition (1) in \Cref{V.D of Sigma_r(G)}. Indeed, observe that $G\setminus N_G[a_1]=H_1\sqcup H_2$, where $H_1$ is the induced subgraph of $G$ on the vertex set 
        \[
        \{(x_i,y_j),(x_{m+2},y_2)\mid i\in [n]\setminus [m+2],j\in[2]\},
        \]
        and $H_2$ is the induced subgraph of $G$ on the vertex set 
        \[
        \{(x_i,y_j),(x_{m},y_2)\mid i\in [m-1],j\in[2]\}.
        \]
        Thus, $E(\Con_r(G\setminus N_G[a_1]))=\emptyset$. Next, consider the graph $G_1=G\setminus a_1$. Observe that $E(\Con_r(G\setminus \{a_1,a_2\}))=\emptyset$, and also $E(\Con_r(G\setminus N_G[a_1]))=\emptyset$. Thus, we have \emph{(5) $\Rightarrow$ (1)}.
		
		The implications \emph{(1) $\Rightarrow$ (2) $\Rightarrow$(3)} follows from the known hierarchy of conditions in \Cref{VdShCm}. Also, \emph{(3) $\Rightarrow$(4)} follows from \cite[Theorem 3]{EagonReiner1998} and \cite[Theorem 1.4]{HaWoodroofe2014}. \emph{(1) $\Rightarrow$ (6)} follows from \Cref{V.D implies chordality}. Moreover, \emph{(6) $\Rightarrow$ (3)} follows from \cite[Theorem 3.3]{SBCC} and \cite[Theorem 3]{EagonReiner1998}. This completes the proof of the theorem.
	\end{proof}

   A corresponding result also holds for the grid graph $P_n \times P_3$.

    \begin{theorem}\label{3ladder graph}
		Let $G = P_n \times P_3$. For $r \geq 2$ and $n \geq 3$, the following are equivalent:
		\begin{enumerate}
			\item $\Sigma_r(G)$ is vertex decomposable.
			\item $\Sigma_r(G)$ is shellable.
			\item $\Sigma_r(G)$ is Cohen-Macaulay.
			\item $2n \le r$.
			\item The clutter $Con_r(G)$ is co-chordal.
		\end{enumerate}
	\end{theorem}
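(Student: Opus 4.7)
The proof will closely mirror the template used for \Cref{ladder graph}, with the non-trivial content concentrated in the implications (4) $\Rightarrow$ (1) and (3) $\Rightarrow$ (4). The remaining equivalences are routine: (1) $\Rightarrow$ (2) $\Rightarrow$ (3) by the hierarchy of \Cref{VdShCm}; (1) $\Rightarrow$ (5) by \cite[Theorem 3.10]{CCVD}; and (5) $\Rightarrow$ (3) by combining \cite[Theorem 3.3]{SBCC} with \cite[Theorem 3]{EagonReiner1998}. What remains is a closed cycle through (4), so it suffices to establish (4) $\Rightarrow$ (1) and (3) $\Rightarrow$ (4).

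For \emph{(4) $\Rightarrow$ (1)}, the plan is to apply \Cref{V.D of Sigma_r(G)} with an explicit shedding sequence consisting of all $n$ middle-row vertices $\{(x_i,y_2) : i \in [n]\}$, taken in \emph{middle-outward order}: set $m = \lceil n/2 \rceil$, start with $v_1=(x_m,y_2)$, and then alternate right/left to get $v_2=(x_{m+1},y_2)$, $v_3=(x_{m-1},y_2)$, and so on. After step $j$, the already-removed middle-row indices form a contiguous interval $[a_{j-1}, b_{j-1}]$ of length $j-1$ centered near $m$, so that $G_{j-1}$ is the grid with precisely those middle cells punched out. When we then remove the closed neighborhood of $v_j = (x_{c_j},y_2)$ (where $c_j = b_{j-1}+1$ or $a_{j-1}-1$), column $c_j$ is emptied entirely and the middle of the adjacent unremoved column is also killed, so the resulting graph splits into a ``left slab'' and ``right slab'' together with at most two isolated corner vertices. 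A direct size count gives each slab at most $3\lceil n/2\rceil + O(1)$ vertices, and using $2n \le r$ one checks that this quantity is strictly less than $r$. By \Cref{Eq. condition for a shedding vertex of Sigma_r(G)} this verifies the shedding condition (a) of \Cref{V.D of Sigma_r(G)}. After all $n$ steps, $G_n \cong P_n \sqcup P_n$, in which each path has only $n < r$ vertices, so $|E(\mathrm{Con}_r(G_n))| = 0 \le 1$, verifying (b) and yielding vertex decomposability.

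For \emph{(3) $\Rightarrow$ (4)}, I would argue the contrapositive: assume $2n > r$ and prove $\Sigma_r(G)$ is not Cohen--Macaulay. Unlike the ladder case, the naive $r$-gap construction is unavailable here, since for small $n$ (e.g.\ $n=3$, $r=5$) the graph $P_n\times P_3$ is trivially $r$-gap-free for dimensional reasons while still failing to be Cohen--Macaulay. The plan is therefore to combine the Eagon--Reiner correspondence with Hochster's formula to translate Cohen--Macaulayness of $\Sigma_r(G) = (\mathrm{Ind}_r(G))^\vee$ into the Leray-type condition $\widetilde H_i(\mathrm{Ind}_r(G)|_W; \mathbb K)=0$ for every induced subcomplex and every $i \ge r-1$. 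Then, assuming $2n>r$, one exhibits an induced subcomplex of $\mathrm{Ind}_r(G)$ with non-vanishing reduced homology in dimension $\ge r-1$. A natural candidate arises by restricting to $V(G)$ minus a carefully chosen middle-row vertex (or a pair of such vertices symmetric about the center), whose topology can be analyzed by a Mayer--Vietoris style decomposition along columns, reducing the computation to the homology of $r$-independence complexes of paths, for which formulas analogous to \Cref{cycle lemma} are available in \cite{LQCG, 2025connectedideals}.

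The main obstacle is precisely this implication (3) $\Rightarrow$ (4): one needs a homology obstruction that detects the exact threshold $r = 2n$ uniformly in $n$, going beyond the standard induced-matching bound of \cite{HaWoodroofe2014}. Once this obstruction is in place, the rest of the equivalence cycle closes off cleanly by the routine implications listed above.
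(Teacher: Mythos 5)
Your treatment of the routine implications and of \emph{(4) $\Rightarrow$ (1)} is essentially the paper's argument: the paper also applies \Cref{V.D of Sigma_r(G)} to the middle-row vertices $(x_i,y_2)$ taken in exactly the middle-outward alternating order you describe, and verifies at each step that $G_{j-1}\setminus N_{G_{j-1}}[a_j]$ splits into a left and a right piece each with fewer than $r$ vertices, ending with $G' = G\setminus\{(x_i,y_2):i\in[n]\} \cong P_n\sqcup P_n$. One quantitative caveat: your slab bound $3\lceil n/2\rceil+O(1)$ is too optimistic. After the removed middle interval has grown to length $j-1$, the larger slab contains two full rows of length about $m+\tfrac{j}{2}$ plus a partial middle row, and the paper's careful count gives cardinality $3m+1+\tfrac{j-3}{2}\le 4m-1=2n-1$ (even case), i.e.\ the slabs approach $2n$ vertices, not $\tfrac{3n}{2}$. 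The hypothesis $2n\le r$ is therefore used at full strength; the conclusion survives, but only because $2n-1<r$, so the count must be done as in the paper.

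The genuine gap is in \emph{(3) $\Rightarrow$ (4)}. You correctly observe that the induced-matching bound of \cite{HaWoodroofe2014} cannot detect the threshold $r=2n$ and that one needs a homology obstruction via Eagon--Reiner and Hochster's formula, but you stop at a plan (``restrict to $V(G)$ minus a middle-row vertex and run Mayer--Vietoris'') without producing the witness, and that candidate restriction is unlikely to be tractable. The paper's key observation, which your proposal is missing, is that the witness is simply the \emph{boundary cycle} of the grid: the set $A=\{(x_i,y_1),(x_i,y_3): i\in[n]\}\cup\{(x_1,y_2),(x_n,y_2)\}$ induces a chordless cycle $C_{2n+2}$ in $P_n\times P_3$. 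Restricting $\mathrm{Ind}_r(G)$ to $A$ gives $\mathrm{Ind}_r(C_{2n+2})$, whose reduced homology is nonvanishing in degree $\ge r-1$ precisely when $2n+2>r+2$ (this is the content of \Cref{cycle lemma}, invoked in the paper through \cite[Theorem~5.3]{LQCG}), so $2n>r$ forces $\Sigma_r(G)$ to fail Cohen--Macaulayness. This single induced subgraph detects the exact threshold uniformly in $n$ and replaces the entire Mayer--Vietoris program you sketch; without it, your proof of \emph{(3) $\Rightarrow$ (4)} is incomplete.
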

	\begin{proof}
		Let $V(P_n) = \{x_1,x_2,\ldots,x_n\}$ and $V(P_3) = \{y_1,y_2,y_3\}$. We first show that \emph{(3)} $\Rightarrow$ \emph{(4)}. Suppose, for a contradiction, that \emph{(3)} holds but $2n > r$. Consider the vertex set
		\[
		A = \{(x_i,y_1), (x_i,y_3) : 1 \leq i \leq n \} \cup \{(x_1,y_2), (x_n,y_2)\}.
		\]
		Then $G[A]$ is an induced cycle of length $2n+2$. By \cite[Theorem~5.3]{LQCG}, $\Sigma_r(G)$ cannot be Cohen–Macaulay since $r+2 < 2n+2$. This contradiction shows that \emph{(3)} $\Rightarrow$ \emph{(4)}.
		
		Now we show \emph{(4)} $\Rightarrow$ \emph{(1)}. Assume $r \geq 2n$. To prove that $\Sigma_r(G)$ is vertex decomposable, we use \Cref{V.D of Sigma_r(G)}. There are two cases to consider:
		
		\medskip	
		\noindent
		\textbf{Case I:} $n$ is even, i.e., $n = 2m$ for some $m \geq 2$. In this case, we will show that 
        \begin{multline*}
a_1 = (x_m, y_2),\; a_2 = (x_{m+1}, y_2),\; a_3 = (x_{m-1}, y_2),\; 
a_4 = (x_{m+2}, y_2),\; \ldots,\\
a_{n-3} = (x_2, y_2),\; a_{n-2} = (x_{n-1}, y_2),\;
a_{n-1} = (x_1, y_2),\; a_n = (x_n, y_2)
\end{multline*}
        is a required sequence of vertices which will allow us to conclude that $\Sigma_r(G)$ is vertex decomposable (by \Cref{V.D of Sigma_r(G)}). Fix an index $j$ with $1 \leq j \leq n$. Set 
		\[G_{j-1} : = G \setminus \{a_1,\ldots,a_{j-1}\} \; \quad \; H_{j-1} : = G_{j-1} \setminus N_{G_{j-1}}[a_j] \; \quad \; \text{with}\quad \; G_0 = G.\]
		Observe that it suffices to show that $E(\Con_r(H_{j-1})) = \emptyset$ only for odd $j \in [n]$. By symmetry, we can guarantee that $E(\Con_r(H_{j-1})) = \emptyset$ holds for even $j \in [n]$. 
		
		Now, $H_0 = L_0 \sqcup R_0$, where $L_0$ is the induced subgraph of $G$ on the vertex set 
        \[
        \{(x_i,y_1), (x_i,y_3) : i \in [m-1]\} \cup \{(x_i,y_2) : i \in [m-2]\},
        \]
        of cardinality $3(m-1) -1$, and $R_0$ is the induced subgraph of $G$ on the vertex set 
        \[
        \{(x_i,y_1), (x_i,y_3) : i \in [n] \setminus [m]\} \cup \{(x_i,y_2) : i \in [n] \setminus [m+1]\},
        \]
        of cardinality $3m-1$. Thus, $E(\Con_r(H_{0})) = \emptyset$. Next, consider the graph $G_2 = G \setminus \{(x_m,y_2),(x_{m+1},y_2)\}$. Then, $H_2 = L_2 \sqcup R_2$, where $L_2$ is an induced subgraph of $L_0$, and $R_2$ is the induced subgraph of $G$ on the vertex set 
        \[
        \{(x_i,y_1), (x_i,y_3) : i \in [n] \setminus [m-1]\} \cup \{(x_i,y_2) : i \in [n] \setminus [m+1]\},
        \]
        of cardinality $3m+1$. Thus, $E(\Con_r(H_{2})) = \emptyset$. For odd $j \in [n] \setminus \{1,3\}$, we have
		
		\[H_{j-1} = L_{j-1} \sqcup R_{j-1},\]
		where $L_{j-1}$ is the induced subgraph of $L_{j-3}$ and $R_{j-1}$ is the induced subgraph of $G$  on the vertex set 
        \[
        \left\{(x_i,y_1), (x_i,y_3) : i \in [n] \setminus \left[m-\frac{j-1}{2}\right]\right\} \cup \left\{(x_i,y_2) : i \in [n] \setminus \left[m+\frac{j-1}{2}\right]\right\}
        \]
        of cardinality $3m+1+\frac{j-3}{2}$. Observe that, $3m+1+\frac{j-3}{2} \le 4m -1 < 4m = 2n$. Therefore, $E(\Con_r(G_{j-1})) = \emptyset$. After eliminating $a_i$ for every $i \in [n]$, we obtain a graph $G'=G \setminus \{(x_i,y_2): i\in[n]\}$ such that $E(\Con_r(G'))=\emptyset$. Hence, by \Cref{V.D of Sigma_r(G)}, $\Sigma_r(G)$ is vertex decomposable.
		
		\medskip
		\noindent 
		\textbf{Case II:} $n$ is odd, i.e., $n=2m+1$ for some $m \geq 1$.  In this case, we will show that 
        \begin{multline*}
a_1 = (x_{m+1}, y_2),\; a_2 = (x_{m}, y_2),\; a_3 = (x_{m+2}, y_2),\; 
a_4 = (x_{m-1}, y_2),\; a_5 = (x_{m+3}, y_2),\; \ldots,\\
a_{n-1} = (x_1, y_2),\; a_{n} = (x_n, y_2)
\end{multline*}
is a required sequence of vertices. As before, fix an integer $j$ with $1 \leq j \leq n$. Set 
		\[G_{j-1} : = G \setminus \{a_1,\ldots,a_{j-1}\} \; \quad \; H_{j-1} : = G_{j-1} \setminus N_{G_{j-1}}[a_j] \; \quad \; \text{with} \; G_0 = G.\]
		Observe that it suffices to show that $E(\Con_r(H_{j-1})) = \emptyset$ only for $j=1$ and even $j \in [n]$. By symmetry, we can guarantee that $E(\Con_r(H_{j-1})) = \emptyset$ holds for odd $j \in [n] \setminus \{1\}$. 
		
		Now, $H_0 = L_0 \sqcup R_0$, where $L_0$ is the induced subgraph of $G$ on the vertex set 
        \[
        \{(x_i,y_1), (x_i,y_3) : i \in [m]\} \cup \{(x_i,y_2) : i \in [m-1]\},
        \]
        of cardinality $3m -1$, and $R_0$ is the induced subgraph of $G$ on the vertex set 
        \[
        \{(x_i,y_1), (x_i,y_3) : i \in [n] \setminus [m+1]\} \cup \{(x_i,y_2) : i \in [n] \setminus [m+2]\},
        \]
        of cardinality $3m-1$. Thus, $E(\Con_r(H_{0})) = \emptyset$.  Next, consider the graph $G_1 = G \setminus \{(x_{m+1},y_2)\}$. Then, $H_1 = L_1 \sqcup R_1$, where $L_1$ is the induced subgraph of $G$ which is the subgraph of $L_0$, and $R_1$ is the induced subgraph of $G$ on the vertex set \[
        \{(x_i,y_1), (x_i,y_0) : i \in [n] \setminus [m]\} \cup \{(x_i,y_2) : i \in [n] \setminus [m+1]\},
        \]
        of cardinality $3m+2$. Thus, $E(\Con_r(H_{1})) = \emptyset$. For even $j \in [n-2] \setminus \{1,2\}$, we have $H_{j-1} = L_{j-1} \sqcup R_{j-1}$,
		where $L_{j-1}$ is the induced subgraph of $L_{j-3}$ and $R_{j-1}$ is the induced subgraph of $G$  on the vertex set 
        \[
        \left\{(x_i,y_1), (x_i,y_3) : i \in [n] \setminus \left[m-\frac{j-2}{2}\right]\right\} \cup \left\{(x_i,y_2) : i \in [n] \setminus \left[m+\frac{j}{2}\right]\right\},
        \]
        of cardinality $3m+2+\frac{j-2}{2}$. Observe that, $3m+2+\frac{j-2}{2} \le 4m < 4m+2 = 2n$. Therefore, $E(\Con_r(G_{j-1})) = \emptyset$. After eliminating $a_i$ for every $i \in [n-2]$, we we obtain a graph $G'=G \setminus \{(x_i,y_2): i\in[n]\}$ such that $E(\Con_r(G'))=\emptyset$. Hence, by \Cref{V.D of Sigma_r(G)}, $\Sigma_r(G)$ is vertex decomposable, as required.

        The implications \emph{(1) $\Rightarrow$ (2) $\Rightarrow$(3)} follows from the known hierarchy of conditions in \Cref{VdShCm}. \emph{(1) $\Rightarrow$ (5)} follows from \Cref{V.D implies chordality}, and \emph{(5) $\Rightarrow$ (3)} follows from \cite[Theorem 3.3]{SBCC} and \cite[Theorem 3]{EagonReiner1998}. This completes the proof of the theorem.
	\end{proof}

For the sake of completeness we state some obvious Corollaries and a Remark about a class examples that could potentially separate shellabality and vertex decomposability. 

\begin{corollary}\label{cor:field-independent}
Let $G$ be a graph belonging to any of the classes listed in
\Cref{V.D of Sigma_r(G) if G is chordal}, \Cref{V.D of Sigma_r(G) if G is co-chordal}, \Cref{V.D of Sigma_r(G) if G is cograph}, \Cref{cycle vd theorem}, \Cref{cycle complement}, \Cref{ladder graph}, and \Cref{3ladder graph}, and let $r \ge 2$.
Then the following statements are equivalent and independent of the
characteristic of the base field $K$:
\begin{enumerate}
  \item the $r$-connected ideal $I_{\mathrm{Ind}_r(G)}$ has a linear resolution over $K$ (and has regularity $r+1$);
  \item the simplicial complex $\Sigma_r(G)$ is Cohen--Macaulay over $K$;
  \item the clutter $\operatorname{Con}_r(G)$ is co-chordal.
\end{enumerate}
\end{corollary}

\begin{corollary}\label{cor:seq-cm}
Let $G$ be a graph in any of the classes stated above in \Cref{cor:field-independent} and let $r \ge 2$.
If $\Sigma_r(G)$ is vertex decomposable, then $\Sigma_r(G)$ is
sequentially Cohen--Macaulay.
\end{corollary}

\begin{corollary}\label{cor:gamma-bound}
Let $G$ be a graph in any of the classes stated above in \Cref{cor:field-independent} and let $r \ge 2$.
If $\Sigma_r(G)$ is Cohen--Macaulay, then the induced matching number of
the $r$-connected clutter satisfies $\gamma_r(G) \le 1$.
\end{corollary}

\begin{remark}\label{general grid counter}
In view of \Cref{ladder graph} and \Cref{3ladder graph}, one might conjecture that for a general grid graph $P_n \times P_m$, the properties of vertex decomposability, shellability, and Cohen–Macaulayness of the complex $\Sigma_r(P_n \times P_m)$ are equivalent. 
However, this is not the case. Computational evidence using SageMath shows that $\Sigma_{10}(P_4 \times P_4)$ is shellable but not vertex decomposable.   
\end{remark}

We end the section by a short discussion on algorithmic aspects of deciding whether a given $\Sigma_r(G)$ is vertex decomposable. 
Note that the algorithm is primarily of theoretical interest and serves as a certificate-based decision procedure rather than a practical implementation for large graphs. 

\begin{corollary}\label{algorithm}
    Let $G$ be a graph on $n$ vertices and let $r\geq 2$ be a fixed positive integer. 
    There exists an algorithm that determines whether $\Sigma_r(G)$ is vertex decomposable in polynomial time for all values of $r$. 
\end{corollary}

\begin{proof}
    By \Cref{V.D of Sigma_r(G)}, a vertex $v$ of a graph $G$ is a shedding vertex for
$\Sigma_r(G)$ if and only if the graph
\[
H_v := G \setminus N[v]
\]
contains no connected induced subgraph on $r$ vertices.  Equivalently,
$v$ is shedding if and only if every connected component of $H_v$ has
strictly fewer than $r$ vertices.

Fix a vertex $v \in V(G)$.  The sizes of the connected components of
$H_v$ can be determined by a standard traversal of the graph $H_v$,
during which each vertex and each edge (circuit) is visited at most once.
Consequently, whether $v$ satisfies the shedding condition can be
decided in time proportional to $|V(G)|+|E(G)|$.

To decide whether $\Sigma_r(G)$ is vertex decomposable, one proceeds
iteratively as follows.  Given the current graph $G$, one tests each
vertex $v \in V(G)$ to determine whether it is shedding.  If no such
vertex exists, then by \Cref{V.D of Sigma_r(G)} the complex $\Sigma_r(G)$ is not
vertex decomposable.  If a shedding vertex $v$ is found, then $v$ is
removed from the graph, and the same procedure is applied to the
resulting induced subgraph.

By \Cref{V.D of Sigma_r(G)}, this greedy elimination procedure is correct whenever
$\Sigma_r(G)$ is vertex decomposable.  Since at most $n$ vertices are
removed, and at each stage at most $n$ vertices are tested, each test
requiring time proportional to $|V(G)|+|E(G)|$, the overall procedure
terminates after a number of steps polynomial in the size of $G$.
\end{proof}

\begin{remark}
Although the definition of a shedding vertex is phrased in terms of connected induced subgraphs on $r$ vertices, it is not necessary to enumerate such subgraphs explicitly.  
Indeed, a graph contains a connected induced subgraph on $r$ vertices if and only if it has a connected component of size at least $r$.  
Thus, the verification of the shedding condition reduces to an elementary connectivity check, rather than a combinatorial search among $r$-vertex subsets.

In particular, the procedure yields a decision method for vertex
decomposability of $\Sigma_r(G)$ whose running time is polynomial in the
number of vertices and edges (circuits) of $G$, for arbitrary $r$.
\end{remark}

\begin{remark}\label{non-vd}
    The above Corollary also provides a certificate of non-vertex-decomposability: if the algorithm returns FALSE, then no shedding sequence exists, and the complex is provably not vertex decomposable. This can be used to systematically search for boundary cases or counterexamples to \Cref{conj1}.  
\end{remark}

    \section{Concluding remarks}\label{con-rem}

	For various classes of graphs, we have observed in \Cref{Section 4 Froberg} that $\Sigma_{r}(G)$ is vertex decomposable if and only if $\Con_r(G)$ is co-chordal. Below we provide one example of a graph $G$, which happens to be a gap-free graph, such that $\Con_3(G)$ is co-chordal but  $\Sigma_{3}(G)$ is not vertex decomposable.
	
	\begin{figure}[H]
		\centering
		\begin{tikzpicture}
			[scale=0.35, vertices/.style={draw, fill=black, circle, inner sep=1.5pt}]
			\node[vertices, label=above:{$x_1$}] ($x_1$) at (0,8)  {};
			\node[vertices, label=left:{$x_2$}] ($x_2$) at (-5,4.8)  {};
			\node[vertices, label=right:{$x_3$}] ($x_3$) at (-2,4.2)  {};
			\node[vertices, label=right:{$x_4$}] ($x_4$) at (1.5,4.4)  {};
			\node[vertices, label=right:{$x_5$}] ($x_5$) at (5,4.6)  {};
			\node[vertices, label=below:{$x_6$}] ($x_6$) at (-4.1,0.4)  {};
			\node[vertices, label=below:{$x_7$}] ($x_7$) at (-1,0.4)  {};
			\node[vertices, label=below:{$x_8$}] ($x_8$) at (2.4,0.4)  {};
			
			\foreach \to/\from in {$x_1$/$x_2$,$x_1$/$x_3$,$x_1$/$x_4$,$x_1$/$x_5$,$x_2$/$x_6$,$x_2$/$x_8$,$x_3$/$x_6$,$x_3$/$x_8$,$x_4$/$x_7$,$x_5$/$x_7$,$x_6$/$x_7$,$x_7$/$x_8$}
			\draw [-] (\to)--(\from);
		\end{tikzpicture}\caption{}\label{fig:gap free graph which is not claw free}
	\end{figure}
\begin{proposition}\label{gap-free example}
    Let $G$ be the graph in \Cref{fig:gap free graph which is not claw free}. Then $G$ is gap-free, $\Con_3(G)$ is co-chordal, and $\Sigma_3(G)$ is shellable but not vertex decomposable.
\end{proposition}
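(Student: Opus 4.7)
The claim bundles four assertions about the $8$-vertex graph $G$ depicted in \Cref{fig:gap free graph which is not claw free}: gap-freeness, co-chordality of $\Con_3(G)$, shellability of $\Sigma_3(G)$, and failure of vertex decomposability of $\Sigma_3(G)$. The plan is to handle them in the order that minimises combinatorial work, beginning with non-vertex-decomposability (where the obstruction is cleanest) and ending with shellability, which I expect to be the main difficulty. Throughout, the argument is organised around the automorphisms of $G$: the involutions $x_2\leftrightarrow x_3$, $x_4\leftrightarrow x_5$, $x_6\leftrightarrow x_8$ together with the ``vertical flip'' $x_1\leftrightarrow x_7$, $x_2\leftrightarrow x_6$, $x_3\leftrightarrow x_8$ (fixing $x_4$ and $x_5$) partition $V(G)$ into three orbits $\{x_1,x_7\}$, $\{x_4,x_5\}$, $\{x_2,x_3,x_6,x_8\}$, reducing each verification to a small number of representative cases.

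For non-vertex-decomposability, by \Cref{Eq. condition for a shedding vertex of Sigma_r(G)} combined with \Cref{V.D of Sigma_r(G)} it suffices to show that $\Sigma_3(G)$ admits no shedding vertex, i.e.\ that for every $x\in V(G)$ the induced subgraph $G\setminus N_G[x]$ contains a connected $3$-subset. One representative from each orbit handles everything: $\{x_6,x_7,x_8\}\subseteq G\setminus N_G[x_1]$, $\{x_2,x_3,x_6\}\subseteq G\setminus N_G[x_4]$, and $\{x_4,x_5,x_7\}\subseteq G\setminus N_G[x_2]$ are each induced $P_3$'s, so no shedding vertex exists and $\Sigma_3(G)$ fails to be vertex decomposable. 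Gap-freeness is similarly handled: one enumerates the automorphism orbits of unordered pairs of disjoint edges and exhibits a connecting edge for each representative (for example $x_4x_7$ connects $x_1x_4$ with $x_7x_8$, and $x_2x_6$ connects $x_1x_2$ with $x_6x_7$).

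Co-chordality of $\Con_3(G)$ will be established by producing an explicit simplicial elimination for the complement clutter $\Con_3(G)^c$, which carries $\binom{8}{3}-26=30$ hyperedges. At each stage one selects a $2$-subset $Z\subseteq V(G)$ whose closed neighborhood in the current clutter is a clique (a simplicial maximal subcircuit in the sense of \Cref{preliminaries}), deletes all triples through $Z$, and iterates until the clutter becomes empty; the triangle-freeness of $G$ and the automorphism orbits make such an order feasible to write down by hand.

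The main obstacle is shellability of $\Sigma_3(G)$. Unlike the preceding parts, there is no clean combinatorial shortcut that I see: the task is to exhibit an explicit shelling order on the $26$ facets of $\Sigma_3(G)$ (each being the complement of a connected triple) and verify the pure intersection condition at every step, or equivalently to order the $26$ generators of $I_{\Ind_3(G)}$ to yield linear quotients and invoke \cite[Theorem~1.4]{HerzogHibiZheng2004}. The ordering itself is the real content, and the verification is essentially direct case work; for this reason the step is most cleanly handled with computer-algebra support (SageMath or Macaulay2), using the symmetries of $G$ to organise the list of generators but not to eliminate the need for a facet-by-facet check.
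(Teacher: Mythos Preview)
Your approach matches the paper's: each of the four claims is verified directly, with the absence of a shedding vertex (checked on orbit representatives via \Cref{Eq. condition for a shedding vertex of Sigma_r(G)}) giving non-vertex-decomposability, and explicit combinatorial data handling co-chordality and shellability. The only difference is one of execution rather than strategy---the paper writes out by hand a simplicial elimination sequence $e_1,\ldots,e_{20}$ for $\Con_3(G)^c$ and an explicit $26$-facet shelling order of $\Sigma_3(G)$, whereas you propose computer-algebra support; your use of $\mathrm{Aut}(G)$ to organise the checks is a tidy device the paper does not employ.
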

\begin{proof}
    It is easy to see that $G$ is a gap-free graph. On the other hand, by \Cref{Eq. Condition for a shedding vertex}, $\Sigma_{3}(G)$ does not contain any shedding vertex, and thus, $\Sigma_{3}(G)$ is not vertex decomposable. However, $\mathcal C=(\Con_3(G))^c$ is chordal since $e_1,e_2,\ldots,e_{20}$ is a sequence of maximal subcircuits of $\mathcal C$ such that $e_1$ is a simplicial maximal subcircuit of $\mathcal C$, $e_i$ is a simplicial maximal subcircuit of $(((\mathcal C\setminus_d e_1)\setminus_d e_2)\setminus_d\cdots)\setminus_d e_i$ for all $i>1$, and $(((\mathcal C\setminus_d e_1)\setminus_d e_2)\setminus_d\cdots)\setminus_d e_{20}=\emptyset$, where
	\[
	\begin{array}{rrrrrrrrrr}
		& e_1 = \{x_1,x_2\} & e_2= \{x_1,x_3\} & e_3= \{x_1,x_4\} & e_4= \{x_1,x_5\} & e_5= \{x_2,x_6\} \\ & e_6= \{x_2,x_8\}		
		& e_7 = \{x_3,x_6\} & e_8= \{x_3,x_8\} & e_9= \{x_4,x_6\}& e_{10}= \{x_4,x_7\} \\
		& e_{11}= \{x_4,x_8\} & e_{12}= \{x_5,x_6\}
		 & e_{13} = \{x_5,x_7\} & e_{14}= \{x_6,x_7\} & e_{15}= \{x_6,x_8\}\\ & e_{16}= \{x_1,x_7\} & e_{17}= \{x_2,x_7\} &e_{18}= \{x_2,x_3\}
		  & e_{19} = \{x_2,x_4\} & e_{20}= \{x_3,x_4\}.
	\end{array}
	\]
	Note that the complex $\Sigma_{3}(G)$ is shellable, as the following ordering of its facets is a shelling order:
	
	\[
	\begin{aligned}
		&\{x_4,x_5,x_6,x_7,x_8,\},\{x_3,x_5,x_6,x_7,x_8\},\{x_2,x_5,x_6,x_7,x_8\},\{x_3,x_4,x_6,x_7,x_8\},\{x_2,x_4,x_6,x_7,x_8\},\\
		&\{x_2,x_3,x_6,x_7,x_8\},\{x_3,x_4,x_5,x_7,x_8\},\{x_2,x_4,x_5,x_7,x_8\},\{x_1,x_4,x_5,x_7,x_8\},\{x_2,x_3,x_5,x_6,x_8\},\\
		&\{x_2,x_3,x_4,x_6,x_8\},\{x_1,x_3,x_4,x_5,x_8\},\{x_1,x_2,x_4,x_5,x_8\},\{x_1,x_2,x_3,x_5,x_8\},\{x_1,x_2,x_3,x_6,x_8\},\\
		&\{x_1,x_2,x_3,x_4,x_8\},\{x_3,x_4,x_5,x_6,x_7\},\{x_2,x_4,x_5,x_6,x_7\},\{x_1,x_3,x_4,x_5,x_7\},\{x_1,x_2,x_4,x_5,x_7\},\\
		&
		\{x_1,x_2,x_3,x_5,x_6\},\{x_1,x_3,x_4,x_5,x_6\},\{x_1,x_4,x_5,x_6,x_7\},\{x_1,x_2,x_4,x_5,x_6\},\{x_1,x_2,x_3,x_4,x_6\},\\
		&\{x_1,x_2,x_3,x_4,x_5\}.
	\end{aligned}
	\]
    \end{proof}
    \begin{remark}
       By \Cref{gap-free example}, we have an example of a gap-free graph $G$ such that $\Sigma_{3}(G)$ is shellable but not vertex decomposable. This shows that the answer to the following question, analogous to \cite[Question 7.5]{LQCG}, is `no': if $G$ is gap-free, then is $I_{\mathrm{Con}_r(G)}$ vertex splittable for all $r\ge 3$? 
    \end{remark}

    Our next result provides an example of an $r$-gap-free unicyclic graph $G$ for each $r\ge 3$ such that the complex $\Sigma_r(G)$ is not vertex decomposable, but it is shellable. Indeed, let $G$ be the graph with the vertex set and circuit set defined as follows: 
\begin{equation}\label{eq3146}
	\begin{split}
		V(G) &= \{x_1, x_2, \ldots, x_r, x_{r+1}, x_{r+2}\} \cup T, \quad \text{with} \quad T = \left\{ x_{r+3}, x_{r+4}, \ldots,  x_{(r+2)+\left\lfloor \tfrac{r+3}{2} \right\rfloor} \right\}, \\
		E(G) &= \left\{\{x_{r+2}, x_1\}, \{x_i, x_{i+1}\} \mid i \in [r+1] \right\}\cup \left\{ \left\{x_j, x_{r+2 + \tfrac{j+1}{2}} \right\} \mid j \in [r+2], j\text{ odd integer} \right\}.
		\end{split}
\end{equation}

The graph $G$ for $r=3,4$ is depicted in \Cref{fig:unicyclic-r3-r4}.
	\begin{figure}[H]
	\centering
	
	\begin{minipage}[t]{0.48\textwidth}
		\centering
		\begin{tikzpicture}
			[scale=0.4, vertices/.style={draw, fill=black, circle, inner sep=1.5pt}]
			
			\node[vertices, label=left:{$x_1$}] (1) at (0,4) {};
			\node[vertices, label=right:{$x_2$}] (2) at (2.5,2.5) {};
			\node[vertices, label=right:{$x_3$}] (3) at (1.5,0) {};
			\node[vertices, label=below left:{$x_4$}] (4) at (-1.5,0) {};
			\node[vertices, label=above:{$x_5$}] (5) at (-2.5,2.5) {};
			
			\node[vertices, label=above:{$x_6$}] (6) at (0,5.5) {};
			\node[vertices, label=below:{$x_7$}] (7) at (2.4,-1.2) {};
			\node[vertices, label=left:{$x_8$}]  (8) at (-4,2.5) {};
			
			\draw (1) -- (2) -- (3) -- (4) -- (5) -- (1);
			\draw (1) -- (6);
			\draw (3) -- (7);
			\draw (5) -- (8);
			
		\end{tikzpicture}
		\caption*{(a) $r = 3$}
	\end{minipage}
	\hfill
	\begin{minipage}[t]{0.48\textwidth}
		\centering
		\begin{tikzpicture}
			[scale=0.4, vertices/.style={draw, fill=black, circle, inner sep=1.5pt}]
			
			\node[vertices, label=left:{$x_1$}]       (1) at (0,4.5) {};
			\node[vertices, label=above right:{$x_2$}] (2) at (2,3) {};
			\node[vertices, label=below:{$x_3$}]       (3) at (2,0.5) {};
			\node[vertices, label=below:{$x_4$}]       (4) at (0,-1) {};
			\node[vertices, label=below:{$x_5$}]        (5) at (-2,0.5) {};
			\node[vertices, label=above left:{$x_6$}]  (6) at (-2,3) {};
			
			\node[vertices, label=above:{$x_7$}]  (7) at (0,6) {};
			\node[vertices, label=right:{$x_8$}]  (8) at (4,0) {};
			\node[vertices, label=left:{$x_9$}]   (9) at (-4,0) {};
			
			\draw (1) -- (2) -- (3) -- (4) -- (5) -- (6) -- (1);
			\draw (1) -- (7);
			\draw (3) -- (8);
			\draw (5) -- (9);
		\end{tikzpicture}
		\caption*{(b) $r = 4$}
	\end{minipage}
	
	\caption{Leaves attached to odd-numbered cycle vertices for (a) $r = 3$ and (b) $r = 4$.}
	\label{fig:unicyclic-r3-r4}
\end{figure}
	
	\begin{proposition}\label{shell but not VD}
		Let $G$ denote the graph with the vertex and circuit set described in \Cref{eq3146} and $r \ge 3$. Then $G$ is an $r$-gap-free unicyclic graph such that the complex $\Sigma_{r}(G)$ is shellable but not vertex decomposable.
	\end{proposition}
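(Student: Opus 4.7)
We must verify four claims: (a) $G$ is unicyclic; (b) $G$ is $r$-gap-free; (c) $\Sigma_r(G)$ is not vertex decomposable; and (d) $\Sigma_r(G)$ is shellable. Claim (a) is immediate from~\eqref{eq3146}: the only cycle of $G$ is $C_{r+2}$, and the remaining edges attach pendants to the odd-indexed cycle vertices.

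For (b), suppose for contradiction that $F_1, F_2 \in E(\Con_r(G))$ form an induced matching, and set $S_i = F_i \cap V(C_{r+2})$ and $L_i = F_i \setminus S_i$. Each pendant has a unique neighbour in the cycle, so the absence of edges between $F_1$ and $F_2$ reduces to the condition that $S_1, S_2$ are disjoint subpaths of $C_{r+2}$ with no cycle-edge between them; hence $|S_1|+|S_2| \le r$ and $|L_1|+|L_2| = 2r-|S_1|-|S_2| \ge r$. Since $G$ has only $\lfloor(r+3)/2\rfloor$ pendants and $\lfloor(r+3)/2\rfloor < r$ whenever $r \ge 4$, this gives a contradiction in that range. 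For $r = 3$ the inequalities become equalities, forcing $L_1 \cup L_2$ to be the set of all three pendants and $S_1 \cup S_2 = \{x_1, x_3, x_5\}$; but no two of these three vertices are adjacent in $C_5$, so $S_1$ and $S_2$ cannot both be nonempty subpaths of $C_5$ summing to three vertices, contradiction.

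For (c), by \Cref{V.D of Sigma_r(G)} combined with \Cref{Eq. condition for a shedding vertex of Sigma_r(G)}, it suffices to exhibit, for every $v \in V(G)$, a connected $r$-subset inside $G \setminus N_G[v]$. If $v$ is a pendant attached to $x_j$, then $G \setminus N_G[v]$ contains the subpath $C_{r+2} \setminus x_j$ on $r+1$ vertices, inside which any $r$ consecutive vertices form a circuit. If $v = x_j$ is a cycle vertex, then $G \setminus N_G[v]$ contains the subtree consisting of $C_{r+2} \setminus \{x_{j-1}, x_j, x_{j+1}\}$ (a path on $r-1$ vertices) together with the pendants whose attachments are still present; a direct count shows this tree has size at least $(r-1) + (\lfloor(r+3)/2\rfloor - 2) \ge r$ for $r \ge 3$, and any connected $r$-subtree is a circuit. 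In every case $G \setminus N_G[v]$ contains a connected $r$-subset, so no vertex of $G$ is a shedding vertex and $\Sigma_r(G)$ is not vertex decomposable.

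For (d), we invoke the Herzog--Hibi--Zheng theorem \cite[Theorem 1.4]{HerzogHibiZheng2004}: $\Sigma_r(G)$ is shellable if and only if the circuit ideal $I_{\Con_r(G)}$ has linear quotients. The strategy is to build an explicit linear-quotient order on the generators in two blocks. First list the $r+2$ cycle $r$-paths $P_i = \{x_i, x_{i+1}, \ldots, x_{i+r-1}\}$ cyclically; consecutive $P_i, P_{i+1}$ differ in exactly one vertex, so each colon $(P_1, \ldots, P_{i-1}) : P_i$ is generated by the single variable associated to $P_{i-1} \setminus P_i$. Then list the circuits containing at least one pendant, grouped first by pendant content (fewer pendants first, then by a fixed lex order on pendant sets) and refined within each group by the position of the accompanying cycle subpath, so that each new circuit either replaces a cycle vertex of an earlier circuit by a pendant or slides the cycle portion by one vertex, each producing a one-variable colon against that specific earlier circuit. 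The main obstacle is verifying that these one-variable colons generate the full colon ideal at every step; when a newly added circuit contains two or more pendants, one must ensure that for each earlier circuit some earlier one-variable colon already supplies a variable appearing in the current colon. The verification is completely routine in the base case $r = 3$ (eleven circuits, the order $P_1, \ldots, P_5$ followed by the six pendant-containing circuits arranged so that each enters with a single new variable), and the recipe lifts uniformly to $r \ge 4$ because the placement of pendants at odd cycle vertices is preserved.
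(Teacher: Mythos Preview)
Your treatment of parts (a)--(c) is fine and in fact sharper than the paper's: your counting argument for (b) is uniform for $r\ge4$ (the paper merely checks $3\le r\le7$ by hand and invokes $|V(G)|<2r$ for $r\ge8$), and your construction of an explicit connected $r$-subset in $G\setminus N_G[v]$ for (c) is more informative than the paper's one-line remark that $|N_G[v]|\le3$.

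Part (d), however, is a genuine gap. You have described an ordering on generators and asserted that it gives linear quotients, but you have not verified this. Two specific problems: first, your claim that each colon ideal $(P_1,\ldots,P_{i-1}):P_i$ is ``generated by the single variable associated to $P_{i-1}\setminus P_i$'' is false already at $i=r+2$, where the colon is $(x_r,x_{r+1})$; what you need is that the colon is generated by \emph{variables}, and even that requires checking. Second, and more seriously, for the pendant-containing circuits you say only that ``each new circuit either replaces a cycle vertex of an earlier circuit by a pendant or slides the cycle portion by one vertex,'' but this at best produces a single linear colon against one specific earlier circuit; the linear-quotients condition demands that for \emph{every} earlier generator $m_j$ there is some earlier $m_k$ with $(m_k):m_i$ a single variable dividing $(m_j):m_i$. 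You never address this, and the claim that the $r=3$ verification ``lifts uniformly to $r\ge4$'' is pure assertion. Your grouping ``fewer pendants first, then lex on pendant sets'' is also different from the order the paper actually uses, so you cannot borrow its verification.

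For comparison, the paper does not use linear quotients at all. It builds the shelling directly on $\mathcal F(\Sigma_r(G))$: facets are partitioned by $I=F^c\cap T$, the blocks $\mathcal F_I$ are ordered by a specific total order $\lessdot$ on $\mathcal P(T)$ (compare first the maximum element, then the size, then recurse on the remainder), the block $\mathcal F_\emptyset$ inherits the known shelling of $\Sigma_r(C_{r+2})$, and the other blocks are ordered arbitrarily. The verification then proceeds by a short case analysis on whether the endpoints $u,v$ of the cycle-path $P=F\cap C_{r+2}$ lie in the earlier facet $F_1$: if one endpoint is missing, swap it for a pendant from $I$; if both are present, use $|V(G)|<2r+2$ to force $P\subseteq F_1$ and then compare the pendant sets $I,J$ via the $\lessdot$ order. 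This is the argument you are missing, and your sketch does not substitute for it.
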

	\begin{proof}
For each $r\ge 8$, one can observe that $(r+2)+\lfloor\frac{r+3}{2}\rfloor<2r$. Thus, analyzing the remaining $3\le r\le 7$ cases individually, we see that $G$ is $r$-gap-free. 
	
	Our aim now is to show that $\Sigma_r(G)$ is not vertex decomposable. Indeed, for any $v\in V(G)$, we have $|N_G[v]|\le 3$. Thus, $E(\Con_r(G\setminus N_G[v])) \neq \emptyset$, and hence, by \Cref{Eq. Condition for a shedding vertex}, $\Sigma_r(G)$ has no shedding vertex. Consequently, $\Sigma_r(G)$ is not vertex decomposable.
    
		To show that $\Sigma_{r}(G)$ is shellable, we need to find a shelling order on the set of facets of $\Sigma_{r}(G)$, and we do this by defining first a special total order on $\mathcal P(T)$, the power set of $T$. Note that the elements $\{i\mid x_i\in T\}$ have natural order induced from the set of natural numbers, which we denote by $<_{\mathbb N}$. Now, we endow $\mathcal P(T)$ with a special order $\lessdot$ which is described in the following steps. 
		 
		 Let $I,J\in \mathcal P(T)$, where $I=\{x_{i_1}\ldots,x_{i_n}\}$ and $J=\{x_{j_1},\ldots,x_{j_m}\}$. Let $i=\max_{<_{\mathbb N}}\{i_1,\ldots,i_n\}$ and $j=\max_{<_{\mathbb N}}\{j_1,\ldots,j_m\}$ .
		 
		 \begin{enumerate}
		 	\item[{\bf Step 0:}] Declare $\emptyset\lessdot I$ for any $\emptyset\neq I\in \mathcal P(T)$.
		 	
		 	\item[{\bf Step 1:}] If $i<_{\mathbb N}j$, then declare $I\lessdot J$.
		 	
		 	\item[{\bf Step 2:}] If $i=j$, and $|I|<_{\mathbb N} |J|$, then declare $I\lessdot J$.
		 	
		 	\item[{\bf Step 3:}] If $i=j$ and $|I|=|J|$, then consider $I'=I\setminus\{x_i\}$ and $J'=J\setminus\{x_j\}$, and go back to Step 1. If $I'\lessdot J'$, then declare $I\lessdot J$.
		 \end{enumerate}
		As an example, if we consider the set $T=\{x_1,x_2,x_3,x_4\}$, then the elements of $\mathcal P(T)$ are ordered as follows:
		 \begin{align*}
		 \emptyset\lessdot\{x_1\}&\lessdot\{x_2\}\lessdot\{x_1,x_2\}\lessdot\{x_3\}\lessdot\{x_1,x_3\}\lessdot\{x_2,x_3\}\lessdot\{x_1,x_2,x_3\}\lessdot\{x_4\}\lessdot\{x_1,x_4\}\lessdot\\
		 &\{x_2,x_4\}\lessdot\{x_3,x_4\}\lessdot\{x_1,x_2,x_4\}\lessdot\{x_1,x_3,x_4\}\lessdot\{x_2,x_3,x_4\}\lessdot\{x_1,x_2,x_3,x_4\}.
		 \end{align*}
		 It is easy to see that $\lessdot$ is a total order on the elements of $\mathcal P(T)$. We now proceed to provide a shelling order $<$ on $\mathcal F(\Sigma_{r}(G))$ based on the ordering $(\mathcal P(T),\lessdot)$.
		 
		 Let $F\in \mathcal F(\Sigma_{r}(G))$. Then $F^c\in E(\Con_r(G))$. For each $I\in\mathcal P(T)$, we define $\mathcal F_I=\{F\in \mathcal{F}(\Sigma_{r}(G))\mid F^c\cap T=I\}$. Observe that $\mathcal{F}(\Sigma_{r}(G))=\sqcup_{I\in\mathcal P(T)}\mathcal F_I$, and $\mathcal F_{\emptyset}=\mathcal F(\Sigma_{r}(C_{r+2}))$, where $C_{r+2}$ is the induced subgraph of $G$ on the vertex set $\{x_1,\ldots,x_{r+2}\}$. By \Cref{cycle vd theorem}, the facets of $\Sigma_{r}(C_{r+2})$ has a shelling order. Based on these observations, we define the following:   
		 \begin{enumerate}
		 	\item[$\bullet$] The facets in $\mathcal F_{\emptyset}$ are given a shelling order.
		 	
		 	\item[$\bullet$] For each fixed $\emptyset\neq I\in\mathcal P(T) $, the facets in $\F_I$ are given any arbitrarily chosen but fixed total order.
		 	
		 	\item[$\bullet$] For any two distinct $I,J\in \mathcal P(T)$ with $F_1\in \F_I$ and $F_2\in \F_J$, we have $F_1<F_2$ if and only if $I\lessdot J$.
		 \end{enumerate}
		 
		 \noindent
		 {\bf Claim}: $(\mathcal F(\Sigma_{r}(G)), <)$ is a shelling order.
		 
		 \vspace{.1cm}
		 \noindent
		 {\it Proof of Claim}: By construction, if $F\in \F_{\emptyset}$ and $F$ is not the first facet in the given ordering, then for any $F'\in \F_{\emptyset}$ with $F'<F$, there exists some $F''\in \F_{\emptyset}$ with $F''<F$ such that $F'\cap F\subseteq F''\cap F$ and $\dim(F''\cap F)=\dim F-1$.
		 
		 Now, let $F\in \F_I$ for some $\emptyset\neq I\in\mathcal P(T)$. Take any $F_1\in \F(\Sigma_{r}(G))$ with $F_1<F$. Let $P:=F\cap C_{r+2}$ be the path with two end points $u,v\in V(C_{r+2})$. If $u\notin F_1$, then consider $F_u=(F\cup\{x\})\setminus\{u\}$, where $x\in I$. Observe that
		  \[F_u^c=(F^c\setminus\{x\})\cup\{u\}\in E(\Con_r(G)),\]
		   since $F^c\in E(\Con_r(G))$. Thus, $F_u\in \F(\Sigma_{r}(G))$ with $F_u<F$, $\dim(F_u\cap F)=\dim(F)-1$, and $F_1\cap F\subseteq F_u\cap F$. A similar argument can be given in case $v\notin F_1$.
		   
		   Next, we consider the case when $u,v\in F_1$. Then $F_1\cap F\supseteq P$ since $|V(G)|<2r+2$ for each $r\ge 4$, and the $r=3$ case can be easily checked separately. Let $Q$ be the path on the vertex set $V(C_{r+2}\setminus P)$, and define
		   \[
		   K=\{x_i\in T\mid \{x_i,z\}\in E(G)\text{ for some }z\in Q\}.
		   \]
		   Since $u,v\in F_1$ and $F_1^c\in E(\Con_r(G))$, we have $F_1\in \F_J$ for some $\emptyset\neq J\in \mathcal P(T)$ with $J\lessdot I$ and $J\neq I$. Moreover, $I,J\subseteq K$ and if $t=\max_{<_{\mathbb N}}\{i\mid x_i\in I\}$, then $J\subseteq \{x_i\in T\mid i\le t\}$. Observe that if $J\subseteq I$, then by our choice of ordering, $J=I$, since $u,v\in F_1$, which is a contradiction. Thus, $J\setminus I\neq\emptyset$, and hence, there exists $x_p\in J\setminus I$ such that $p<t$. In this case, if we take $\widehat F=(F\cup\{x_t\})\setminus\{x_p\}$, then 
		   \[
		   \widehat{F}^c=(F^c\setminus x_t)\cup\{x_p\}\in E(\Con_r(G)).
		   \]
		   Furthermore, $\widehat F\in \F(\Sigma_{r}(G))$ with $\widehat F<F$, $\dim(\widehat F\cap F)=\dim(F)-1$, and $F_1\cap F\subseteq \widehat F\cap F$. This completes the proof of the claim and subsequently, the proposition.
	\end{proof}

\section*{Open Problems and Future Directions}
We end the article by briefly outlining some open problems, in addition to the \Cref{conj1} stated in the \Cref{introduction}. 
\begin{itemize} 

  \item \textbf{Uniform, case-free arguments}. How to expand the ideas proving the equivalence statements for cographs, cycles, complements of cycles, and the grid/ladder graphs? For example, is there a uniform structural reason that co-chordality of $\mathrm{Con}_r(G)$ matches with the vertex decomposability of $\Sigma_r(G)$, in these families?

  \item \textbf{Cohen-Macaulay but not shellable $\Sigma_r(G)$.} Find $G,r$ with $\mathrm{Con}_r(G)$ co-chordal (hence linear resolution of the $r$-connected ideal) while the complex $\Sigma_r(G)$ fails to be shellable. Potential search spaces include circulant/Cayley graphs, prisms $C_n\times P_2$, toroidal grids $C_m\times C_n$, etc.

  \item \textbf{Beyond covered families.} Test the full equivalence (vertex decomposable $\Leftrightarrow$ shellable $\Leftrightarrow$ Cohen-Macaulay $\Leftrightarrow$ co-chordal $\mathrm{Con}_r(G)$) on chordal bipartite graphs (bipartite graph with every cycle of length at least $6$ has a chord) and comparability/permutation graphs, where simplicial structures and elimination orders might extend.
  Recently, Ghosh and Selvaraja \cite{ghoshSelva}, among other things, show that for co-chordal-cactus graphs, $(2K_2, C_4)$-free graphs and co-grid graphs, the associated $r$-connected ideal has minimal free resolution, for $r\geq 2$. They achieve this by proving that the corresponding $\Con_r(G)$ is chordal. 
  Hence, it would be interesting to see whether the $r$-co-connected complex is vertex decomposable?


  \item \textbf{Higher-independence topology.} Leverage Leray-number and homotopy-type results for higher independence complexes to sharpen our understanding (e.g., in case of circulant graphs, grids) and to identify obstructions to the Cohen-Macaulay property in $\Sigma_r(G)$ via Hochster’s formula.


\end{itemize}

\noindent
{\bf Acknowledgements.} 
The authors are partially supported by a grant from the Infosys Foundation.
The authors sincerely thank Margaret Bayer and Sheila Sundaram  for providing helpful comments on an earlier version of this manuscript. 
The authors are grateful to Geevarghese Philip for explaining the complexity aspect and helping them with the proof of \Cref{algorithm}. 

\subsection*{Data availability statement} Data sharing does not apply to this article as no new data were created or analyzed in this study.

 \subsection*{Conflict of interest} The authors declare that they have no known competing financial interests or personal relationships that could have appeared to influence the work reported in this paper.

 \nocite{*}
\bibliographystyle{abbrv}
\bibliography{references} 

@article{LQCG,
  title={Linear quotients of connected ideals of graphs},
  author={Ananthnarayan, H and Javadekar, Omkar and Maithani, Aryaman},
  journal={Journal of Algebraic Combinatorics},
  volume={61},
  number={3},
  pages={34},
  year={2025},
  publisher={Springer}
}

@article{RoyPatra2025,
  author    = {Amit Roy and Sourav Kanti Patra},
  title     = {Graded Betti Numbers of a Hyperedge Ideal Associated to Join of Graphs},
  journal   = {Bulletin of the Malaysian Mathematical Sciences Society},
  volume    = {48},
  pages     = {116},
  year      = {2025},
  doi       = {10.1007/s40840-025-01897-3},
}

@article{2025connectedideals,
      title={Stanley-{R}eisner ideals of higher independence complexes of chordal graphs.}, 
      author={Kanoy Kumar Das and Amit Roy and Kamalesh Saha},
      year={2026},
      journal={International Journal of Algebra and Computation, online ready},
      primaryClass={math.CO},
      url={https://doi.org/10.1142/S021819672650013X}, 
}

@article {EagonReiner1998,
    AUTHOR = {Eagon, John A. and Reiner, Victor},
     TITLE = {Resolutions of {S}tanley-{R}eisner rings and {A}lexander
              duality},
   JOURNAL = {J. Pure Appl. Algebra},
  FJOURNAL = {Journal of Pure and Applied Algebra},
    VOLUME = {130},
      YEAR = {1998},
    NUMBER = {3},
     PAGES = {265--275},
      ISSN = {0022-4049,1873-1376},
   MRCLASS = {13D25 (13D02 13H10)},
  MRNUMBER = {1633767},
MRREVIEWER = {Ralf\ Fr\"oberg},
       DOI = {10.1016/S0022-4049(97)00097-2},
       URL = {https://doi.org/10.1016/S0022-4049(97)00097-2},
}

@book {RHVBook,
    AUTHOR = {Villarreal, Rafael H.},
     TITLE = {Monomial algebras},
    SERIES = {Monographs and Research Notes in Mathematics},
   EDITION = {Second},
 PUBLISHER = {CRC Press, Boca Raton, FL},
      YEAR = {2015},
     PAGES = {xviii+686},
      ISBN = {978-1-4822-3469-5},
   MRCLASS = {13-02 (05C65 05E15 90C27)},
  MRNUMBER = {3362802},
MRREVIEWER = {Siamak\ Yassemi},
}

@article {HaWoodroofe2014,
    AUTHOR = {H\`a, Huy T\`ai and Woodroofe, Russ},
     TITLE = {Results on the regularity of square-free monomial ideals},
   JOURNAL = {Adv. in Appl. Math.},
  FJOURNAL = {Advances in Applied Mathematics},
    VOLUME = {58},
      YEAR = {2014},
     PAGES = {21--36},
      ISSN = {0196-8858,1090-2074},
   MRCLASS = {13D02 (05E40 13F20 55U10)},
  MRNUMBER = {3213742},
MRREVIEWER = {Benjamin\ P.\ Richert},
       DOI = {10.1016/j.aam.2014.05.002},
       URL = {https://doi.org/10.1016/j.aam.2014.05.002},
}

@article{DDIG,
  title={Distance r-domination number and r-independence complexes of graphs},
  author={Deshpande, Priyavrat and Shukla, Samir and Singh, Anurag},
  journal={European Journal of Combinatorics},
  volume={102},
  pages={103508},
  year={2022},
  publisher={Elsevier}
}

@article{CCVD,
  title={Chordality of clutters with vertex decomposable dual and ascent of clutters},
  author={Nikseresht, Ashkan},
  journal={Journal of Combinatorial Theory, Series A},
  volume={168},
  pages={318--337},
  year={2019},
  publisher={Elsevier}
}

@article{Hochster77,
  title={Cohen-{M}acaulay rings, combinatorics, and simplicial complexes},
  author={Hochster, Melvin},
  journal={Ring theory, II, Proc. Second Conf., Univ. Oklahoma, Norman, Okla., in: Lecture Notes Pure Appl. Math.},
  volume={26},
  pages={171--223},
  year={1977},
  publisher={Dekker, New York-Basel}
}

@article {ProBill1980,
    AUTHOR = {Provan, J. Scott and Billera, Louis J.},
     TITLE = {Decompositions of simplicial complexes related to diameters of
              convex polyhedra},
   JOURNAL = {Math. Oper. Res.},
  FJOURNAL = {Mathematics of Operations Research},
    VOLUME = {5},
      YEAR = {1980},
    NUMBER = {4},
     PAGES = {576--594},
      ISSN = {0364-765X,1526-5471},
   MRCLASS = {52A25 (90C05)},
  MRNUMBER = {593648},
MRREVIEWER = {J.\ Parida},
       DOI = {10.1287/moor.5.4.576},
       URL = {https://doi.org/10.1287/moor.5.4.576},
}

@article {HerzogHibiZheng2004,
    AUTHOR = {Herzog, J\"urgen and Hibi, Takayuki and Zheng, Xinxian},
     TITLE = {Dirac's theorem on chordal graphs and {A}lexander duality},
   JOURNAL = {European J. Combin.},
  FJOURNAL = {European Journal of Combinatorics},
    VOLUME = {25},
      YEAR = {2004},
    NUMBER = {7},
     PAGES = {949--960},
      ISSN = {0195-6698,1095-9971},
   MRCLASS = {05C38 (13F55 52B20)},
  MRNUMBER = {2083448},
MRREVIEWER = {Andrew\ Vince},
       DOI = {10.1016/j.ejc.2003.12.008},
       URL = {https://doi.org/10.1016/j.ejc.2003.12.008},
}

@article {MoradiAhang2016,
    AUTHOR = {Moradi, Somayeh and Khosh-Ahang, Fahimeh},
     TITLE = {On vertex decomposable simplicial complexes and their
              {A}lexander duals},
   JOURNAL = {Math. Scand.},
  FJOURNAL = {Mathematica Scandinavica},
    VOLUME = {118},
      YEAR = {2016},
    NUMBER = {1},
     PAGES = {43--56},
      ISSN = {0025-5521,1903-1807},
   MRCLASS = {13F55 (13D02)},
  MRNUMBER = {3475100},
MRREVIEWER = {Siamak\ Yassemi},
       DOI = {10.7146/math.scand.a-23295},
       URL = {https://doi.org/10.7146/math.scand.a-23295},
}

@article{SBCC,
  title={Stability of {B}etti numbers under reduction processes: Towards chordality of clutters},
  author={Bigdeli, Mina and Pour, Ali Akbar Yazdan and Zaare-Nahandi, Rashid},
  journal={Journal of Combinatorial Theory, Series A},
  volume={145},
  pages={129--149},
  year={2017}
}

@article{CHVD,
  title={Chordal graphs, higher independence and vertex decomposable complexes},
  author={Abdelmalek, Fred M and Deshpande, Priyavrat and Goyal, Shuchita and Roy, Amit and Singh, Anurag},
  journal={International Journal of Algebra and Computation},
  volume={33},
  number={03},
  pages={481--498},
  year={2023},
  publisher={World Scientific}
}

@article{FVDH,
  title={Fr{\"o}berg’s theorem, vertex splittability and higher independence complexes},
  author={Deshpande, Priyavrat and Roy, Amit and Singh, Anurag and Van Tuyl, Adam},
  journal={Journal of Commutative Algebra},
  volume={16},
  number={4},
  pages={391--410},
  year={2024},
  publisher={Rocky Mountain Mathematics Consortium Tempe, AZ, USA}
}

@article{TCG1,
author = {Bayer, Margaret and Denker, Mark and Jelić Milutinović, Marija and Rowlands, Rowan and Sundaram, Sheila and Xue, Lei},
year = {2024},
month = {05},
pages = {1630-1675},
title = {Topology of Cut Complexes of Graphs},
volume = {38},
journal = {SIAM Journal on Discrete Mathematics},
doi = {10.1137/23M1569034}
}

@article{TCG2,
author = {Bayer, Margaret and Denker, Mark and Jelić Milutinović, Marija and Sundaram, Sheila and Xue, Lei},
year = {2025},
month = {05},
pages = {1123-1157},
title = {Topology of Cut Complexes {II}},
volume = {39},
journal = {SIAM Journal on Discrete Mathematics},
doi = {10.1137/24M1676077}
}

@article{HICH,
author = {Deshpande, Priyavrat and Singh, Anurag},
year = {2021},
month = {03},
pages = {pp. 53–71},
title = {Higher independence complexes of graphs and their homotopy types},
volume = {36},
journal = {Journal of the Ramanujan Mathematical Society}
}

@incollection {OSR,
    AUTHOR = {Fr\"{o}berg, Ralf},
     TITLE = {On {S}tanley-{R}eisner rings},
 BOOKTITLE = {Topics in algebra, {P}art 2 ({W}arsaw, 1988)},
    SERIES = {Banach Center Publ.},
    VOLUME = {26},
     PAGES = {57--70},
 PUBLISHER = {PWN, Warsaw},
      YEAR = {1990},
   MRCLASS = {13D25 (13H10)},
  MRNUMBER = {1171260},
}

@article{CEI,
  title={Stanley-{R}eisner ideals with linear powers},
  author={Antonino Ficarra and Somayeh Moradi},
  journal={arXiv:2508.10354},
  year={2025}
}

@article{CSCM,
author = {Woodroofe, Russ},
year = {2009},
month = {11},
pages = {},
title = {Chordal and Sequentially {C}ohen-{M}acaulay Clutters},
volume = {18},
journal = {The electronic journal of combinatorics},
doi = {10.37236/695}
}

@article{Emtander2008,
author = {Emtander, Eric},
year = {2008},
month = {04},
pages = {},
title = {A class of hypergraphs that generalizes chordal graphs},
volume = {106},
journal = {Mathematica Scandinavica},
doi = {10.7146/math.scand.a-15124}
}

@article{WSHAG,
author = {Paolini, Giovanni and Salvetti, Mario},
year = {2017},
month = {03},
pages = {},
title = {Weighted sheaves and homology of {A}rtin groups},
volume = {18},
journal = {Algebraic \& Geometric Topology},
doi = {10.2140/agt.2018.18.3943}
}

@article{reisner76,
  title={Cohen-{M}acaulay quotients of polynomial rings},
  author={Reisner, Gerald Allen},
  journal={Advances in Mathematics},
  volume={21},
  number={1},
  pages={30--49},
  year={1976},
  publisher={Academic Press}
}

@article{tuyl_vill08,
title = {Shellable graphs and sequentially {C}ohen–{M}acaulay bipartite graphs},
journal = {Journal of Combinatorial Theory, Series A},
volume = {115},
number = {5},
pages = {799-814},
year = {2008},
issn = {0097-3165},
doi = {https://doi.org/10.1016/j.jcta.2007.11.001},
author = {Adam {Van Tuyl} and Rafael H. Villarreal}
}

@article {ST06,
    AUTHOR = {Szab\'{o}, Tibor and Tardos, G\'{a}bor},
     TITLE = {Extremal problems for transversals in graphs with bounded
              degree},
   JOURNAL = {Combinatorica},
  FJOURNAL = {Combinatorica. An International Journal on Combinatorics and
              the Theory of Computing},
    VOLUME = {26},
      YEAR = {2006},
    NUMBER = {3},
     PAGES = {333--351},
      ISSN = {0209-9683},
   MRCLASS = {05D15 (05C35)},
  MRNUMBER = {2246152},
MRREVIEWER = {Jen\H{o} Lehel},
       DOI = {10.1007/s00493-006-0019-9},
       URL = {https://doi.org/10.1007/s00493-006-0019-9},
}

@misc{ghoshSelva,
      title={Linear resolution of connected graph ideals and their powers}, 
      author={Arka Ghosh and S Selvaraja},
      year={arXiv:2512.06346},
      eprint={2512.06346},
      archivePrefix={arXiv},
      primaryClass={math.AC},
      url={https://arxiv.org/abs/2512.06346}, 
}

\end{document}